\documentclass[10 pt,a4paper]{article}

\usepackage{latexsym,amsfonts,amsmath,amsthm,amssymb, graphicx, mathrsfs, fancyhdr, makeidx, amscd,  color, mathdesign, fontenc, stix}
%
%
%
\usepackage[all]{xy}

\newtheorem{theorem}{Theorem}[section]

\newtheorem{corollary}[theorem]{Corollary}

\newtheorem{definition}[theorem]{Definition}
\newtheorem{example}[theorem]{Example}

\newtheorem{lemma}[theorem]{Lemma}

\newtheorem{proposition}[theorem]{Proposition}
\newtheorem{remark}[theorem]{Remark}

\newcommand{\R}{\mathbb{R}}

\newcommand{\disp}{\displaystyle}

\newcommand{\ra}{\rightarrow}

\newcommand{\eps}{\varepsilon}

\newcommand{\di}{\mathrm{d}}

\newcommand{\lip}{\mathrm{Lip}}
\newcommand{\loc}{\mathrm{loc}}

\newcommand{\USC}{\mathrm{USC}}
\newcommand{\LSC}{\mathrm{LSC}}
\newcommand{\capac}{\mathrm{cap}}

\DeclareMathOperator{\diver}{div\,}

\DeclareMathOperator{\Hess}{\mathrm{Hess}}
\DeclareMathOperator{\hess}{\mathrm{Hess}}

\addtolength{\textwidth}{1cm}

\begin{document}

\author{Dami\~ao J. Ara\'ujo \and Luciano Mari \and Leandro F. Pessoa}
\title{\textbf{Detecting the completeness of a Finsler manifold via potential theory for its infinity Laplacian}}
\date{}
\maketitle
\scriptsize \begin{center} Departamento de Matem\'atica, Universidade Federal da Para\'iba\\
58059-900, Jo\~ao Pessoa - Para\'iba (Brazil)\\
E-mail: araujo@mat.ufpb.br
\end{center}

\scriptsize \begin{center} Dipartimento di Matematica, Universit\`a degli studi di Torino,\\
Via Carlo Alberto 10, 10123 Torino (Italy)\\
E-mail: luciano.mari@unito.it
\end{center}

\scriptsize \begin{center} Departamento de Matem\'{a}tica, Universidade Federal do Piau\'{i},\\
64049-550, Teresina (Brazil)\\
E-mail: leandropessoa@ufpi.edu.br
\end{center}

\normalsize

\maketitle

\begin{abstract}
\noindent In this paper, we study some potential theoretic aspects of the eikonal and infinity Laplace operator on a Finsler manifold $M$. Our main result shows that the forward completeness of $M$ can be detected in terms of Liouville properties and maximum principles at infinity for subsolutions of suitable inequalities, including $\Delta^N_\infty u \ge g(u)$. Also, an $\infty$-capacity criterion and a viscosity version of Ekeland principle are proved to be equivalent to the forward completeness of $M$. Part of the proof hinges on a new boundary-to-interior Lipschitz estimate for solutions of $\Delta^N_\infty u = g(u)$ on relatively compact sets, that implies a uniform Lipschitz estimate for certain entire, bounded solutions without requiring the completeness of $M$.
\end{abstract}


\tableofcontents

\newpage 

\section{Introduction}



This work is about a potential theory for the $\infty$-Laplace operator 
	\[
	\Delta_\infty u : = \hess u(\nabla u, \nabla u)
	\] 
and its normalized version
	\[
	\Delta_\infty^N u : = \hess u \left( \frac{\nabla u}{|\nabla u|}, \frac{\nabla u}{|\nabla u|} \right)
	\]	
on a Finsler manifold. The infinity Laplacian has received great attention after the pioneering work of G. Arronson \cite{aronsson1,aronsson2} in the 1960s, and showed intriguing connections with pure and applied mathematical issues, as for example, Tug-of-war games \cite{Bar_Evans_Jensen, peres_schramm_sheffield_wilson, rossi_survey}, mass transportation problems \cite{transportmass} and others. The study of the infinity Laplacian is strictly related with an $L^\infty$ minimization problem: given a bounded domain $\Omega \subset \mathbb{R}^m$ and a Lipschitz function $\zeta: \partial \Omega \to \mathbb{R}$, to find an extension $u$ of $\zeta$ in $\Omega$ such that the Lipschitz constant $\lip(u,A) \leq \lip(h,A)$ for any $A \Subset \Omega$ and $h$ which agrees with $u$ on $\partial A$. Such function is called an absolutely minimizing Lipschitz  extension, shortly AMLE \cite{crandall_visit, champion_depascale}. Jensen in \cite{jensen} showed that the AMLE property is equivalent to the fact that $u$ be a viscosity solution for $\Delta_\infty u = 0$, and by \cite{crandall_evans_gariepy,jensen} AMLEs are also characterized by the comparison principle with cone functions 
$$
C_x(y)= a + b |x-y| \quad a,b \in \mathbb{R} \mbox{ and } y \in \mathbb{R}^m,
$$
which are fundamental solutions of the homogeneous infinity Laplacian. This is the tripod that supports the role of the basic theory of infinity harmonic functions on, say, $\R^m$ with its standard metric. Since then, various works have been devoted to the analysis of $\Delta_\infty$ for more general structures, and an account can be found in \cite{crandall_visit,aronsson_crandall_juutinen}. Especially, on domains of $\R^m$ equipped with a Finsler norm, the AMLE problem and the associated $\infty$-Laplace operator have been studied in \cite{wu,GXY,mebrate_mohammed,mebrate_mohammed_2}.\par
One of the starting points of the present investigation is the following Liouville theorem for $\infty$-subharmonic functions on $\R^m$ (cf. \cite{lindqvistmanfredi,crandall_evans_gariepy}):
	\begin{equation}\label{teo_liouRm}
	\emph{entire viscosity solutions of } \, \Delta_\infty u \ge 0 \ \emph{ with $\sup_{\R^m} u < \infty$  are constant.}
	\end{equation}
Its proof is a consequence, for instance, of the Harnack inequality for $\infty$-subharmonic equations \cite{lindqvistmanfredi,lindqvistmanfredi_2,juutinen_2} (cf. also \cite{evans}):
	\begin{equation}\label{eq_Harnack}
	u(x)-\sup_M u \le \big[u(y)-\sup_M u\big]e^{-\frac{|x-y|}{R-r}} \qquad \forall \, x,y \in B_z(r), \ \ R> r,
	\end{equation}
by letting $R \ra +\infty$. It is natural to ask for which class of manifolds the above theorem remains true; inspection of the proof of \eqref{eq_Harnack} reveals that the completeness of $\R^m$ is used, and suggests that \eqref{teo_liouRm} be true for any complete Riemannian manifolds, regardless to curvature requirements. This is, we shall see, easy to prove. However, the question whether \eqref{teo_liouRm} holds \emph{only} on complete manifolds is more interesting and, to our knowledge, only studied in recent years. Our investigation arose in the context of fully nonlinear potential theory, motivated by the desire recast, in a unified framework, various maximum principles at infinity available in the literature: the celebrated Ekeland \cite{ekeland_2, ekeland} and Omori-Yau ones \cite{omori,yau,chengyau}, as well as those coming from stochastic geometry (the weak maximum principles of Pigola-Rigoli-Setti \cite{prsmemoirs}, related to parabolicity, stochastic and martingale completeness of a Riemannian manifold). This investigation initiated in \cite{maripessoa, maripessoa_2}, in a Riemannian setting, see also previous results in \cite{pigolasetti_ensaio, prs_overview}.  The need to consider first order conditions in the statements of Ekeland and Omori-Yau principles requires to include the eikonal into the class of equations to which the theory be applicable, and opened the way to also encompass the $\infty$-Laplace operator, tightly related to the eikonal one. In Theorem 1.12 of \cite{maripessoa}, the geodesic completeness of a Riemannian manifold (i.e., the completeness of $M$ as a metric space) is shown to be equivalent to various other conditions, among them a suitable version of Ekeland principle for viscosity solutions (that, consequently, turns out to be equivalent to the original Ekeland formulation), and the validity of \eqref{teo_liouRm} on $M$.\par
	In the present work, we move some step further and improve Theorem 1.12 in \cite{maripessoa} on various aspects. First, we extend the investigation from Riemannian to Finsler manifolds, where the possible asymmetry of the metric introduces further issues; we hope to convey our feeling that the Finsler setting is quite natural for the problems we study herein. Second, we also consider inhomogeneous inequalities of the type
	\[
	\Delta_\infty^N u \ge g(u)
	\] 
for continuous, non-negative $g$. The main purpose is to discover whether a Liouville property for bounded solutions of $\Delta_\infty^N u \ge g(u)$ for \emph{some} non-negative $g$ still detects the completeness of $M$ (more precisely, the forward completeness of the Finsler manifold $(M,F)$), or rather a weaker property. \par
	To comment on this point, and to motivate the conditions in the statement of our main theorem, we begin with the analogy between \eqref{teo_liouRm} and a corresponding statement for the Laplace operator on a manifold $M$: 
	\begin{equation}\label{teo_liouLapla}
	\emph{entire solutions of } \, \Delta u \ge 0 \ \emph{with $\sup_M u < \infty$ are constant},
	\end{equation}
a well-known property in potential theory that was the subject of intense investigation starting from the $2$-dimensional case, where the validity or failure of \eqref{teo_liouLapla} characterizes the conformal type of a simply connected Riemann surface. A Riemannian manifold for which \eqref{teo_liouLapla} holds is named \emph{parabolic}. As observed in \cite[Thm. 6C]{ahlforssario}, \eqref{teo_liouLapla} can equivalently be  expressed as the following maximum principle at infinity:
	\begin{equation}\label{teo_para}
	\begin{array}{l}
	\emph{for every $\Omega \subset M$ open and $u \in C(\overline\Omega)$ solving}\\[0.2cm]
	\left\{ \begin{array}{l}
	\Delta u \ge 0 \qquad \text{on } \, \Omega, \\[0.2cm]
	\sup_\Omega u < \infty \qquad \text{on } \, \Omega 
	\end{array} \right. \qquad \Longrightarrow \qquad \sup_\Omega u = \sup_{\partial \Omega} u. 
	\end{array}
	\end{equation}
Compact manifolds are clearly parabolic, so the property characterizes non-compact manifolds that are, somehow, not far from being compact. In view of applications to a variety of geometric problems (see \cite{prsmemoirs, amr}), it is useful to investigate versions of \eqref{teo_para} for inhomogeneous equations like $\Delta u \ge g(u)$, with $g \in C(\R)$. Quite interestingly, they relate to a property that, like parabolicity, ties to the theory of stochastic processes: the \emph{stochastic completeness} of $M$. Briefly, $M$ is parabolic if the minimal Brownian motion $\mathscr{B}_t$ on $M$ is recurrent, that is, almost surely, its trajectories visit any fixed compact set infinitely often along a divergent sequence of times. On the other hand, $M$ is said to be stochastically complete if $\mathscr{B}_t$ is non-explosive, that is, if trajectories of $\mathscr{B}_t$ have infinite lifetime almost surely. Note that, by their very definitions, parabolic manifolds are stochastically complete, but the viceversa is far from being true: for instance, if $M$ is geodesically complete, sufficient conditions for the parabolicity and stochastic completeness are, respectively, 
	\begin{equation}\label{eq_volgrowth}
	\int^{+\infty} \frac{s\di s}{|B_s|} = +\infty, \qquad \text{and} \qquad \int^{+\infty} \frac{s\di s}{\log |B_s|} = +\infty, 
	\end{equation}
where $|B_r|$ is the volume of a geodesic ball centered at a fixed origin. The two criteria, sharp for relevant classes of manifolds, can be found in Theorems 5.1 and 6.2 of \cite{grigoryan}, that we suggest to consult for a detailed account. While the first in \eqref{eq_volgrowth} is somehow binding (for instance, $\R^m$ is parabolic if and only if $m =2$), the volume  threshold to match the second in \eqref{eq_volgrowth} is of the order of $e^{r^2}$, and includes many more Riemannian manifolds of interest in geometry, for instance all of those with Ricci curvature bounded from below by a constant (cf. \cite{prsmemoirs, amr}). Characterizations of the stochastic completeness of $M$ in terms of maximum principles at infinity were found in \cite{prs_proceeding, prsmemoirs, amir}, and among equivalent statements we choose here the following one: to state it, the lack of translation invariance of the inequality $\Delta u \ge g(u)$ requires to fix a normalization threshold, taken to be zero for convenience. Then, the principle writes as follows:
	\begin{equation}\label{teo_WMP}
	\begin{array}{l}
	\emph{for some/every $g \in C(\R)$ with $g(0)=0$, $g>0$ on $\R^+$, the following holds:} \\
	\emph{for every $\Omega \subset M$ open and $u \in C(\overline\Omega)$ solving} \\[0.2cm]
	\left\{ \begin{array}{l}
	\Delta u \ge g(u) \qquad \text{on } \, \Omega, \\[0.2cm]
	0 < \sup_\Omega u < \infty \qquad \text{on } \, \Omega 
	\end{array} \right. \qquad \Longrightarrow  \qquad \sup_\Omega u = \sup_{\partial \Omega} u.
	\end{array}
	\end{equation}
Loosely speaking, the principle guarantees the non-existence of functions $u$ that are bounded from above and solve $\Delta u \ge g(u)$ on a non-empty upper level set $\{ u > \gamma\}$, for some $\gamma \ge 0$. Solutions of $\Delta u \ge g(u)$ can be taken in either the viscosity or the weak sense.\par
	Geometric applications also motivated the study of maximum principles at infinity when the Laplacian is replaced by more general, nonlinear operators, notably including the mean curvature one 
	\[
	\diver \left( \frac{\nabla u}{\sqrt{1 + |\nabla u|^2}} \right) 
	\]
and the $p$-Laplacian
	\[
	\Delta_p u \doteq \diver (|\nabla u|^{p-2} \nabla u), \qquad p \in (1, \infty). 
	\]
For instance, the first operator appears when studying entire graphs with prescribed mean curvature, and the validity of maximum principles at infinity are therefore instrumental to prove Bernstein type theorems \cite{bmpr,cmmr}, while the $p$-Laplacian, in the limit $p \ra 1$, gives an efficient way to construct solutions of the inverse mean curvature flow on spaces with mild curvature requirements, see  \cite{maririgolisetti_mono}, and maximum principles at infinity serve to guarantee the global gradient estimates needed to perform the approximation procedure. For both operators, criteria in the spirit of \eqref{eq_volgrowth} have been established in \cite{prsmemoirs,bmpr}, still showing a substantial difference between the ``parabolic" case $g \equiv 0$ and the case $g>0$ on $\R^+$. More precisely, the formal limit
	\[
	\Delta_\infty^N u = \lim_{p \ra \infty} \frac{|\nabla u|^{2-p}}{p} \Delta_p u
	\]	
relates solutions of the normalized equation $\Delta^N_\infty u \ge g(u)$ to those of 
	\begin{equation}\label{solu_deltap}
	\Delta_p u \ge p g(u)|\nabla u|^{p-2}
	\end{equation}
for large $p$. By Theorem 2.24 and Proposition 7.4 in \cite{bmpr}, property \eqref{teo_WMP} for solutions of \eqref{solu_deltap} holds on any complete manifold $M$ satisfying  
	\[
	\liminf_{r \ra \infty} \frac{\log |B_r|}{r^2} < \infty,
	\]
a bound that is sharp and, perhaps surprisingly, independent of $p$ (cf. Section 7.4 in \cite{bmpr}), while if $g \equiv 0$ a sharp threshold is given by 
	\[
	\int^{+\infty} \left(\frac{s \di s}{|B_s|}\right)^{\frac{1}{p-1}} = \infty,
	\]
cf. \cite{prsmemoirs} and the references therein. It is therefore tempting to wonder whether, in the limit $p \ra \infty$, the two possibilities for $g$ still detect different properties.\par 
\vspace{0.2cm}
Let $(M,F)$ be a Finsler manifold (the basics of Finsler Geometry are recalled in Section \ref{sec_prelim}). We assume the Finsler norm $F : TM \ra [0, \infty)$ be positively homogeneous of degree $1$, and $F^2$ be strictly convex when restricted on each fiber of $TM \ra M$. For smooth $u$, the Chern connection associated to $F$ allows to define the Hessian of a function and, consequently, a Finsler $\infty$-Laplacian. Also, the norm $F$ induces a pseudo-distance $\di$ on $M$ that is, $\di$ satisfies all of the requirements of a distance function but, possibly, its symmetry. The lack of symmetry introduces further issues, among them the need to distinguish which properties relate to the \emph{forward} completeness of $M$ rather than to its \emph{backward} one. The forward completeness for $(M,F)$ is defined by asking that forward Cauchy sequences converge, i.e. if $\{x_i\}$ satisfies the following Cauchy condition:
	\[
	\forall \, \varepsilon>0, \ \exists N= N(\varepsilon) \in \mathbb{N} \ : \ N\leq i < j \Longrightarrow \di(x_i,x_j) < \varepsilon,
	\]
then $\{x_i\}$ converges. Following \cite{champion_depascale}, we define the Lipschitz constant of $u$ on a set $A$ to be 
\begin{eqnarray}\label{lipschitz_eq}
\textrm{Lip}(u,A) \doteq \inf \Big\{ L \in [0,\infty] \ : \ u(y) - u(x) \leq L\di(x,y) \quad \forall\, x,y \in A\Big\}.
\end{eqnarray}
Let $\varrho^+(x) = \di(o,x)$ denotes the distance from a fixed origin $o \in M$. We are ready to state our main result. Note that solutions are meant to be in the viscosity sense, see \cite{CIL}.


\begin{theorem}\label{teo_main}
Let $(M,F)$ be a connected Finsler manifold. Then, the following properties are equivalent:
\begin{itemize}
\item[1)] $(M,F)$ is forward complete.
\item[2)] Having denoted with $\varrho^+$ the forward distance from a fixed origin,  
	\begin{equation}\label{eq_it2}
	\left\{ \begin{array}{l}
	\Delta_\infty^N u \ge 0 \qquad \text{on } \,  M, \\[0.2cm]
	u_+(x) = o\big( \varrho^+(x)\big) \quad \text{as $\varrho^+(x) \ra +\infty$} 
	\end{array}\right. \qquad \Longrightarrow \qquad \text{$u$ is constant}.
	\end{equation}
\item[3)] For some/every $g \in C(\R)$ with $g(0)=0$ and $g \ge 0$ on $\R^+$, the following holds: 
	\[
	\left\{ \begin{array}{l}
	\Delta_\infty^N u \ge g(u) \qquad \text{on } \, M, \\[0.2cm] 
	0 < \sup_M u < + \infty
	\end{array}\right. \qquad \Longrightarrow \qquad \text{$u$ is constant}.
	\]
\item[4)] For some/every $g \in C(\R)$ with $g(0) =0$ and $g\ge 0$ on $\R^+$, the following holds: for every open subset $\Omega \subset M$,  	
	\begin{equation}\label{WMP_g}
	\left\{ \begin{array}{l}
	\Delta_\infty^N u \ge g(u) \qquad \text{on } \, \Omega, \\[0.2cm] 
	0 < \sup_\Omega u < + \infty
	\end{array}\right. \qquad \Longrightarrow \qquad \sup_\Omega u = \sup_{\partial \Omega} u. 
	\end{equation}
\item[5)] For some/every $\theta \in (0,1)$ and $\lambda>0$, it holds	
	\begin{equation}\label{eq_theta}
	\left\{ \begin{array}{l}
	\Delta_\infty^N u \ge \lambda u_+^\theta \qquad \text{on } \,  M, \\[0.4cm]
	\disp \limsup_{\varrho^+(x) \ra +\infty} \frac{u_+(x)}{\varrho^+(x)^{\frac{2}{1-\theta}}} < \sqrt[1-\theta]{\lambda\frac{(1-\theta)^2}{2(1+\theta)}}  
	\end{array}\right. \quad \Longrightarrow \qquad \text{$u$ is a (nonpositive) constant}.
	\end{equation}
\item[6)] For some/every $K \subset M$ compact, it holds
	\[
	\inf_{u \in \mathscr{L}(K,M)}  \lip(u,M) = 0, 
	\]
where
	\begin{equation}\label{def_classeL}
	\mathscr{L}(K,M) = \Big\{u \in \lip_c(M), \ u \le -1 \ \text{ on } \, K \Big\}.
	\end{equation}
\item[7)] For some/every $K \subset M$ compact, the $\infty$-capacity of $K$ vanishes:
	\[
	\capac_\infty(K) : = \inf_{u \in \mathscr{L}(K,M)} \| F(\nabla u)\|_{L^\infty(M)} = 0, 
	\]
where $\mathscr{L}(K,M)$ is defined in \eqref{def_classeL}.	
\item[8)] For some/every $0 < G \in C(\R)$, the following holds: for every open subset $\Omega \subset M$, and for every viscosity subsolution of 	
	\begin{equation}\label{eq_bonita}
	\left\{ \begin{array}{l}
	G(u) - F(\nabla u) = 0 \qquad \text{on } \, \Omega, \\[0.2cm] 
	\sup_\Omega u <  \infty
	\end{array}\right. \qquad \Longrightarrow \qquad \sup_\Omega u = \sup_{\partial \Omega} u. 
	\end{equation}
\item[9)] (\textbf{Ekeland principle}). For every $u \in \USC(M)$ with $\sup_M u < \infty$, for every $\eps>0$ and $x_0 \in M$ such that $u(x_0) > \sup_M u - \eps$, and for every $\delta>0$, there exists $\bar x \in M$ such that
	\[
	\begin{array}{l}
	u(\bar x) \ge u(x_0), \quad \, \di(x_0,\bar x) \le \delta, \quad \, \text{and} \quad \,	u(y) \le u(\bar x) + \frac{\eps}{\delta}\di(\bar x,y) \quad \forall \, y \in M.
	\end{array}
	\] 
\end{itemize}
\end{theorem}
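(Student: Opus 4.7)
The plan is to prove all nine properties equivalent via a cyclic chain anchored at forward completeness $(1)$. The argument splits naturally into two groups: the ``metric--theoretic'' properties $(6), (7), (9)$, which reduce to facts about the forward distance $\di$, and the ``potential--theoretic'' properties $(2), (3), (4), (5), (8)$, which will be handled using the new boundary--to--interior Lipschitz estimate announced in the abstract.

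For the implications flowing out of $(1)$, the main tool is Hopf--Rinow for Finsler manifolds, which guarantees that forward--closed bounded sets are compact, so $\varrho^+$ is a proper exhaustion. I would prove $(1) \Rightarrow (9)$ by the classical Ekeland iteration: the near--maximizing sequence produced is forward Cauchy, hence convergent under forward completeness. From $(9)$, property $(8)$ follows by evaluating a would--be bounded viscosity subsolution of $G(u) - F(\nabla u) = 0$ at an Ekeland near--maximum, where $F(\nabla u)$ can be forced arbitrarily small while $G(u)$ remains bounded below. For $(1) \Rightarrow (6), (7)$, the cut--offs $u_\delta(x) = -(1 - \delta \varrho^+(x))_+$ lie in $\mathscr{L}(K, M)$ for compact $K \subset \{\varrho^+ < \delta^{-1}\}$ and have Lipschitz constant $\delta \to 0$. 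The key step $(1) \Rightarrow (4)$ is to apply the boundary--to--interior Lipschitz estimate on large forward balls $B^+_R$: it yields a bound on $\lip(u, K)$ for any compact $K$ independent of $R$, and combining this with $g(u) > 0$ on $\{u > \gamma\}$ rules out interior maxima. The specializations $(4) \Rightarrow (3) \Rightarrow (2), (5)$ follow by choosing $\Omega = M$ and a suitable $g$; the explicit constant in $(5)$ arises from matching with the radial model subsolution $v(x) = c\,\varrho^+(x)^{2/(1-\theta)}$ with $c = \sqrt[1-\theta]{\lambda(1-\theta)^2/(2(1+\theta))}$, and $(3) \Rightarrow (2)$ uses a $\varrho^+$--dependent perturbation to reduce sublinear growth to the bounded case.

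The reverse direction is the most delicate part. Assuming $(M,F)$ is not forward complete, one extracts a divergent forward Cauchy sequence $\{x_k\}$, representing an ideal point of the forward metric completion. To falsify $(7)$, fix a compact $K \subset M$ whose forward distance to the ideal point is some finite $r_0$: any $u \in \mathscr{L}(K, M)$ with $u \le -1$ on $K$ and $u = 0$ near the ideal point must decrease by at least $1$ along a path of $\di$--length at most $r_0$, forcing $\lip(u, M) \ge 1/r_0$, hence $\capac_\infty(K) > 0$. Analogous arguments reject $(6), (9), (8)$. The negations of the Liouville / WMP statements $(2), (3), (4), (5)$ are supplied by explicit bounded non--constant subsolutions built from truncated distance--to--ideal--point functions; their existence rests on the \emph{unconditional} (no--completeness--needed) portion of the boundary--to--interior Lipschitz estimate, as emphasized in the abstract.

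The hard step is the boundary--to--interior Lipschitz estimate itself. In the Euclidean setting it follows classically from comparison with cone functions $C_x(y) = a + b|x-y|$, but on a Finsler manifold one must develop a cone--comparison principle for viscosity solutions of the inhomogeneous equation $\Delta^N_\infty u = g(u)$, adapted to the asymmetric forward distance and to the non--smoothness of $F$ away from the zero section. Carefully tracking the direction in which each propagation step acts -- and distinguishing forward from backward cones -- is where the bulk of the technical effort will lie.
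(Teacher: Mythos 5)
Your overall cyclic plan is reasonable and several pieces match the paper (in particular $1)\Rightarrow 9)$ by the classical Ekeland iteration, $1)\Rightarrow 6),7)$ via linear cut-offs of $\varrho^+$, and the radial model for the sharp constant in $5)$). There are, however, genuine gaps at exactly the places the paper identifies as its technical core.

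First, your step $9)\Rightarrow 8)$ does not work as described. The Ekeland conclusion $u(y)\le u(\bar x)+\tfrac{\eps}{\delta}\di(\bar x,y)$ produces an upper-touching function that is \emph{not} $C^1$ at $\bar x$, so it is not an admissible viscosity test function; and, more fundamentally, a one-sided Lipschitz bound on $u$ from above does not constrain $F(\nabla\phi)$ for arbitrary $C^1$ test functions $\phi\succ_{\bar x}u$, so nothing forces $F(\nabla\phi)$ to be small. The paper instead proves $1)\Rightarrow 8)$ directly, comparing a would-be bad subsolution with the barriers $v_0+\eps\varrho^+$ built from Calabi-regularized forward distance functions, which exploits that $\varrho^+\to\infty$ under forward completeness.

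Second, your reverse direction for $3)$ and $4)$ is incomplete. The ``truncated distance-to-ideal-point'' functions of a divergent forward Cauchy sequence (a correct idea for the case $g\equiv 0$, and different from the paper's route) only refute the \emph{every} alternative; they do not refute the \emph{some} alternative, which is what $3)\text{-some}\Rightarrow 1)$ requires. Indeed, shifting the ideal-point function $u$ by a constant to make $\sup u>0$ destroys the inequality $\Delta_\infty^N u\ge g(u)$ whenever $g>0$ on $\R^+$, and a plain cutoff $\max(u,-C)$ destroys the subsolution property at the touching level. One would have to post-compose with a convex solution $\eta$ of $\eta''=g(\eta)$ to produce a valid $g$-dependent counterexample for each $g$; this is essentially the $g$-cone machinery, not a truncation. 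The paper circumvents the issue by arguing \emph{directly} from $3)$: it builds Perron solutions of $\Delta_\infty^N u_j=g(u_j)$ on an exhausting family of annuli, invokes Theorem \ref{teo_fundamental_intro} to get a uniform Lipschitz bound independent of completeness, passes to a limit $u_\infty$, shows $u_\infty\equiv 0$ using $3)$, and obtains a contradiction from a finite-time maximal geodesic. Your sketch never explains how you would use the Lipschitz estimate in the forward direction $1)\Rightarrow 4)$ either — the estimate controls solutions on relatively compact sets in terms of boundary data, and is not obviously applicable to a bounded subsolution on an arbitrary open $\Omega$ whose boundary regularity is unknown; the paper's $2)\Rightarrow 4)$ is a clean glue-and-extend argument that avoids this.

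Third, you dismiss $8)\Rightarrow 1)$ with ``analogous arguments.'' In the paper this is the hardest implication: it constructs a Khas'minskii potential by stacking obstacle-problem solutions for the dual eikonal operator $\widetilde F(\widetilde\nabla\cdot)-\widetilde G(\cdot)$, and then deduces forward completeness from the existence of a proper exhaustion function with Lipschitz control. A contrapositive counterexample construction from an ideal point might be possible, but it is not ``analogous'' to the Cauchy-sequence Lipschitz bound you use for $6),7),9)$, and it is not supplied. As it stands, the proposal leaves the two central implications of the theorem — $3),4)\Rightarrow 1)$ in the \emph{some} form, and $8)\Rightarrow 1)$ — essentially unproved.
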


%

\begin{remark}[\textbf{The some/every alternative}]
\emph{Property $3)$, as well as $4)$, holds for \emph{every} $g$ as in the statement provided that it holds for \emph{some} such $g$. In particular, in view of our assumption on $g$, the \emph{every} alternative is equivalent to require $3)$ for the smallest choice $g \equiv 0$. Therefore, unlikely the case of $\Delta_p$ with $p < \infty$, for the $\infty$-Laplacian the Liouville theorems for $\Delta_\infty^N u \ge g(u)$ under the assumptions $g\equiv 0$ or $g(0)=0$, $g>0$ on $\R^+$ are equivalent.
}
\end{remark}

\begin{remark}[\textbf{Backward completeness}]
\emph{The notion of backward completeness for $(M,F)$, demanding that backward Cauchy sequences converge,  corresponds to the forward completeness of the dual Finsler structure
	\[
	\widetilde{F}(p) : = F(-p), \qquad  p \in TM,
	\]
hence it can be described via the eikonal and normalized $\infty$-Laplacian $\widetilde{\Delta}^N_\infty$  associated to $\widetilde{F}$. In view of the identity 
	\[
	\widetilde{\Delta}_\infty^N u = - \Delta_\infty^N ( -u), 
	\]
the backward completeness of $(M,F)$ can be detected by minimum principles for solutions of $\Delta_\infty^N u \le g(u)$. We leave the statement to the interested reader.
}
\end{remark}

\begin{remark}[\textbf{On conditions $\bf 8),9)$: a viscosity Ekeland principle}]
\emph{Implication $1) \Rightarrow 9)$ is the celebrated Ekeland principle \cite{ekeland, ekeland_2}, originally stated for metric spaces, while $9) \Rightarrow 1)$ has been pointed out by J.D. Weston \cite{weston} and F. Sullivan \cite{sullivan}. Extension to the Finsler setting is straightforward, since Weston-Sullivan arguments as well as the proof of $9)$ provided in \cite[p.444]{ekeland} do not use the symmetry of $\di$ at any stage. We included $9)$ for the sake of completeness, and to emphasize that $8)$ can be interpreted as a viscosity version of Ekeland principle. 
}
\end{remark}

\begin{remark}[\textbf{On condition $\bf 5)$}]
\emph{Reaction-diffusion equations with strong absorption as in $5)$ were investigated in \cite{araujo_leitao_teixeira}, where the authors proved regularity for the unnormalized case $\Delta_\infty u = \lambda u_+^\gamma$ in $\mathbb{R}^m$, $0\leq \gamma<3$, and related Liouville theorems for entire solutions satisfying 
\begin{equation}\label{gammacond}
u(x)=O(|x|^{\frac{4}{3-\gamma}}) \quad \mbox{as} \quad |x| \to \infty.
\end{equation}
In the limit $\gamma \ra 0$, this relates to the $\infty$-obstacle problem. The constant bounding the limsup in \eqref{eq_theta} is sharp, as readily seen on flat Euclidean space by noting that 
	\[
	u(x) = \sqrt[1-\theta]{\lambda\frac{(1-\theta)^2}{2(1+\theta)}} |x|^{\frac{2}{1-\theta}}
	\]
solves $\Delta_\infty^N u = \lambda u^\theta$.		
}
\end{remark}

\begin{remark}[\textbf{On conditions $\bf 7),8)$}]
\emph{The equivalence between $1)$ and $7),8)$ were first pointed out in \cite[Thms. 2.28 and 2.29]{pigolasetti_ensaio} in a Riemannian setting: it is inspired by the characterization of parabolic Riemannian manifolds by means of the vanishing of the $2$-capacity $\capac_2(K)$ of some/every compact set $K$ (cf. \cite{grigoryan}), and to equivalent ones for the $p$-Laplacian, $p\in (1,\infty)$ in terms of the $p$-capacity
	\[
	\capac_p(K) : = \left\{ \int_M |\nabla u|^p \ : \ u \in \lip_c(M), \ u \ge 1 \ \text{ on } \, K \right\}.
	\]
Observe that, to detect the \emph{forward} completeness, we had to switch signs and define our class $\mathscr{L}(K,M)$ by requiring $u \le -1$ on $K$. 	 
}
\end{remark}

\begin{remark}[\textbf{Normalized vs unnormalized $\infty$-Laplacian}]
\emph{The equivalence between items $1), \ldots, 5)$ could be rephrased for the unnormalized $\infty$-Laplacian with minor changes, replacing $\Delta_\infty^Nu \ge g(u)$ with the inequality 
	\[
	\Delta_\infty u \ge g(u)|\nabla u|^2,
	\]
and $5)$ with the following statement:} 
\begin{itemize}
\item[5')] for some/every $\theta \in (0,3)$ and $\lambda > 0$, it holds	
	\[
	\left\{ \begin{array}{l}
	\Delta_\infty u \ge \lambda u_+^\theta \qquad \text{on } \,  M, \\[0.4cm]
	\disp \limsup_{\varrho^+(x) \ra +\infty} \frac{u_+(x)}{\varrho^+(x)^{\frac{4}{3-\theta}}} < \sqrt[3-\theta]{\lambda\frac{(3-\theta)^4}{64(1+\theta)}}  
	\end{array}\right. \qquad \Longrightarrow \qquad \text{$u$ is a (nonpositive) constant}.
	\]
\end{itemize}
\end{remark}

The fact that the forward completeness of $(M,F)$ implies any of $2), \ldots, 4)$ is not difficult to prove, and might be well-known among specialists, although we found no precise reference; on the other hand, $1) \Rightarrow 5)$ is more subtle, due to the possibility that the limsup in \eqref{eq_theta} be positive, and  inspired by \cite{araujo_leitao_teixeira}. We briefly comment on implications $8) \Rightarrow 1)$ and $3) \Rightarrow 1)$, that are the technical core of the present work.\par
The proof of $8) \Rightarrow 1)$ exploits results in \cite{marivaltorta,maripessoa}, namely it uses the Ahlfors-Khas'minskii duality (AK-duality, for short). Roughly speaking, for a large class of fully nonlinear inequalities 
	\begin{equation}\label{eq_gensub}
	\mathscr{F}(x,u, \di u, \hess u) \ge 0, 
	\end{equation}
the AK-duality establishes the equivalence between a maximum principle at infinity for solutions of \eqref{eq_gensub}, in the form given by \eqref{teo_WMP} (called there the Ahlfors property), and the existence of solutions of the \emph{dual} inequality
	\[
	\widetilde{\mathscr{F}}(x,u, \di u, \hess u) \ge 0, \qquad \text{with} \ \ \widetilde{\mathscr{F}}(x,r,p, A) = - \mathscr{F}(x,-r,-p,-A),
	\]
that decay to $-\infty$ as slow as we wish\footnote{We say that $u$ decays to $-\infty$ if upper level sets of $u$ have compact closure in $M$.} (named Khas'minskii potentials). The eikonal equation
	\[
	G(u) - F(\nabla u) = 0
	\] 	
falls into the class of PDEs for which the AK-duality holds, thus we can construct a Khas'minskii potential $w$ that is a subsolution of the dual equation $\widetilde{F}(\widetilde{\nabla} w) - \widetilde{G}(w) = 0$, with $\widetilde{F}$ the dual Finsler structure, $\widetilde{\nabla}$ the gradient induced by $\widetilde{F}$ and $\widetilde{G}(t) : = G(-t)$. The existence of $w$ easily implies the forward completeness of $M$. The construction of $w$ proceeds, as in \cite{maripessoa,maripessoa_2}, by stacking solutions of obstacle problems, and has independent interest.\par  
Implication $3) \Rightarrow 1)$ is shown by means of a sequence $\{u_j\}$ of solutions of $\Delta_\infty^N u_j = g(u_j)$ defined on an increasing family of relatively compact sets $\Omega_j$, locally converging to a limit solution $u_\infty$ on $M \backslash K$, with $K$ a small compact set. The main issue is to prove a uniform, global Lipschitz bound for $\{u_j\}$ without knowing that $M$ be forward complete. In fact, one cannot use the classical local Lipschitz bound for bounded solutions of $\Delta^N_\infty u \ge 0$ as in \cite[Lemma 2.5]{crandall_evans_gariepy}, since the latter is uniform for $u \in L^\infty(M)$ only if balls of a fixed radius centered at any point of $M$ are relatively compact, that force $M$ to be forward complete. In \cite{maripessoa}, for $g \equiv 0$, the authors reach the goal by exploiting the absolutely minimizing property of the $\infty$-harmonic functions $u_j$, a characterization that currently seems unavailable\footnote{In this respect, note that \eqref{eq_cong} is not included in the class of PDEs considered in \cite{barron_jensen_wang}, where the authors compute the Euler-Lagrange equations of absolute minimizers for 
	\[
	\mathscr{I}(u,\Omega) = \mathrm{ess} \sup_{x \in \Omega} f(x,u(x),\di u(x))
	\]
In our case (say, even in a Riemannian setting), the PDE $\Delta_\infty u = g(u)|\nabla u|^2$ for the unnormalized $\infty$-Laplacian would be, formally, the Euler-Lagrange equation for the choice
	\[
	f(x,s,p) = |p|^2 - 2 \int_0^s g(t) \di t,
	\] 		
a function that does not satisfy all of the assumptions in Theorem 3.5 of \cite{barron_jensen_wang}.}  for solutions of $\Delta_\infty^Nu = g(u)$. We overcome the problem by showing a Lipschitz bound directly via comparison with radial solutions $g$ (hereafter called $g$-cones), extending an elegant argument in \cite[Prop. 2.1]{aronsson_crandall_juutinen}. We prove the following result that, to the best of our knowledge, seems to be new.
	
\begin{theorem}\label{teo_fundamental_intro}
Let $\Omega \Subset (M,F)$, and let $u \in C(\overline{\Omega})$ satisfy
	\begin{equation}\label{eq_cong}
	\Delta_\infty^N u = g(u) \quad \text{on } \, \Omega,
	\end{equation}
where $g$ is continuous and non-negative on $u(\overline{\Omega})$. If $u$ is Lipschitz on $\partial \Omega$, then $u \in \lip(\overline{\Omega})$ and 
	\begin{equation*}
	\lip(u, \Omega) \le \sqrt{ \lip(u, \partial \Omega)^2 + 2\int^{\sup_\Omega u}_{\inf_\Omega u} g(s)\di s }.
	\end{equation*}
	\end{theorem}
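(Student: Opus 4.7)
The plan is to adapt Aronsson--Crandall--Juutinen's comparison-with-cones argument, replacing linear cones by radial solutions of the companion ODE, which we shall call \emph{$g$-cones}. Set $L_0 := \lip(u, \partial\Omega)$, $E := 2\int_{\inf_\Omega u}^{\sup_\Omega u} g(s)\,\di s$, and $K := \sqrt{L_0^2 + E}$, the target Lipschitz constant. For $(\alpha, L) \in \R \times [0, \infty)$ and an anchor $x_0 \in M$, let $\phi = \phi_{\alpha, L}$ be the maximal solution of $\phi'' = g(\alpha + \phi)$ with $\phi(0) = 0$, $\phi'(0) = L$, and define the associated $g$-cone $\mathcal{C}_{x_0, \alpha, L}(y) := \alpha + \phi(d(x_0, y))$. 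Multiplying the ODE by $\phi'$ and integrating yields the energy identity $\phi'(r)^2 = L^2 + 2\int_\alpha^{\alpha + \phi(r)} g(s)\,\di s$, so $\phi$ is convex and non-decreasing, and satisfies $\phi' \le K$ on the range where $\alpha + \phi \in [\inf_\Omega u, \sup_\Omega u]$ provided $L \le L_0$.

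First I would verify that $\mathcal{C}_{x_0, \alpha, L}$ is a viscosity supersolution of $\Delta_\infty^N w \le g(w)$ on $M \setminus \{x_0\}$. At points where $\varrho := d(x_0, \cdot)$ is smooth, the Finsler Gauss lemma gives $F(\nabla \varrho) = 1$ and $\hess \varrho(\nabla \varrho, \nabla \varrho) = 0$, so a direct computation yields $\Delta_\infty^N(\phi \circ \varrho) = \phi''(\varrho) = g(\mathcal{C})$. At points on the cut locus of $x_0$, the standard viscosity supersolution property of $\varrho$ with respect to the eikonal extends the inequality.

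Next, I apply comparison with cones anchored at boundary points to obtain the half of the estimate with $x \in \partial\Omega$. For each $z \in \partial\Omega$, the boundary Lipschitz hypothesis and the convexity bound $\phi_{u(z), L_0}(r) \ge L_0 r$ give $u(y) - u(z) \le L_0\, d(z, y) \le \phi_{u(z), L_0}(d(z, y))$ for $y \in \partial\Omega$. Since $z \notin \Omega$, no apex singularity lies in $\Omega$, and the standard viscosity comparison for $\Delta_\infty^N w = g(w)$ on $\Omega$ yields
\begin{equation*}
u(y) - u(z) \le \phi_{u(z), L_0}(d(z, y)) \le K\, d(z, y) \qquad \forall\, z \in \partial\Omega,\ y \in \overline\Omega,
\end{equation*}
which settles the estimate in all pairs $(x, y)$ with $x \in \partial\Omega$.

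The remaining, most delicate step is to control $u(y) - u(x)$ when $x \in \Omega$. I plan to argue by doubling of variables: for $\eta > 0$, set $\Psi(x, y) := u(y) - u(x) - (K + \eta)\, d(x, y)$ on $\overline\Omega \times \overline\Omega$. If $\Psi$ attains a positive maximum at $(x_*, y_*)$, then $x_* \ne y_*$, and the case $x_* \in \partial\Omega$ is excluded by the previous estimate. In the remaining interior and mixed cases, Ishii's Lemma produces matching jets of $u$ at $x_*$ and $y_*$ which, together with the PDE $\Delta_\infty^N u = g(u)$ at both points, the Finsler Gauss identity along the minimizing geodesic joining them, and the slack $\eta > 0$ built into the penalty, yield a strict algebraic contradiction; letting $\eta \downarrow 0$ recovers the sharp constant $K$. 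The main obstacle will be this last step: the Finsler asymmetry forces the gradients of $d(\cdot, y_*)$ at $x_*$ and of $d(x_*, \cdot)$ at $y_*$ to be dual unit vectors at the two endpoints of a minimizing geodesic rather than literal negatives of one another, and the cut loci of these two distance functions must be regularized away, for instance by replacing $d(x, y)$ with a smoothed penalty such as $d(x, y)^2/(2\delta)$ and letting $\delta \downarrow 0$.
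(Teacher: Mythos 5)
Your boundary-anchored comparison with $g$-cones is on the right track and essentially matches the paper's treatment of the cases where an endpoint lies on $\partial\Omega$; note, though, that because the Finsler distance is asymmetric you also need backward $g$-cones $\bar C^-_{z,b}$ anchored at boundary points to cover the pairs $(x,y)$ with $y\in\partial\Omega$, $x\in\Omega$, which your writeup omits.

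The genuine gap is the interior-interior case, which you propose to close by doubling of variables, Ishii's lemma, and a smoothed penalty. This is the standard template for comparison of viscosity solutions, but it is precisely the template that is known \emph{not} to close for the infinity Laplacian: Jensen's uniqueness theorem for $\Delta_\infty u=0$ was a difficult open problem exactly because the generic Ishii-lemma machinery does not produce the required contradiction for this degenerate operator, and the paper's own Appendix~I proves comparison for $\Delta_\infty^N$ via the Armstrong--Smart finite-difference scheme, not via Ishii's lemma. Your plan for a ``strict algebraic contradiction'' after jet matching (which you yourself flag as the main obstacle) is not a plausible route, particularly with the additional asymmetry and cut-locus issues you mention.

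The paper avoids doubling entirely by a cone-sliding, ODE-level argument. Fix $b>b_{\partial\Omega}$. From $\Delta_\infty^N u\ge 0$ one first gets $u\in\lip_{\loc}(\Omega)$, so backward $g$-cones with vertex $x\in\Omega$ do fit under $u$ near $x$. Set
\[
b' \doteq \inf\big\{h\ge 0 \ : \ \bar C^-_{x,h}\le u \ \text{on}\ \partial\Omega\big\}.
\]
A compactness argument shows $b'<\infty$ is attained, giving $z_0\in\partial\Omega$ with $\bar C^-_{x,b'}(z_0)=u(z_0)$ and $\bar C^-_{x,b'}\le u$ on $\partial\Omega$, hence on $\overline\Omega$ by comparison with $g$-cones. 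Now examine the forward cone $\bar C^+_{z_0,b}$: since $b>b_{\partial\Omega}$, it lies above $u$ on $\partial\Omega$, hence (again by comparison) above $u$ and so above $\bar C^-_{x,b'}$; both touch at $z_0$, and convexity of the generating ODE solution $\eta_b$ forces the initial slope $b$ of $\bar C^+_{z_0,b}$ at $z_0$ to dominate the slope of $\bar C^-_{x,b'}$ at $z_0$, which is at least $b'$. Hence $b\ge b'$, and then
\[
u(y) \ge \bar C^-_{x,b'}(y) \ge u(x) - L_{b'}\,\di(y,x) \ge u(x) - L_b\,\di(y,x),
\]
where $L_b=\sqrt{b^2+2\int_{u_*}^{u^*} g}$. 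Letting $b\downarrow b_{\partial\Omega}\le \lip(u,\partial\Omega)$ gives the claimed bound. You should replace your doubling step with this slope-comparison argument; it is the actual content of the theorem and the reason the proof needs the sliding-slope notion $b_A$ rather than just $\lip(u,A)$.
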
	
\noindent In the particular case $g \equiv 0$, this reduces to the AMLE condition $\lip(u, \Omega) = \lip(u, \partial \Omega)$. 
\vspace{0.2cm}


The paper is organized as follows: in Section \ref{sec_prelim} we recall definitions and main properties of Finsler manifolds. In Sections \ref{sec_viscosity} and \ref{sec_compacones}, we define viscosity solutions of $\infty$-Laplace equations, state their main comparison results with forward and backward $g$-cones, and prove Theorem \ref{teo_fundamental_intro}. Eventually, in Section \ref{sec_proof} we prove Theorem \ref{teo_main}. Appendices I and II contain some ancillary results adapted to the Finsler setting. 
	
\vspace{0.5cm}

\noindent \textbf{Acknowledgements.} The authors would like to express their gratitude to Andrea Mennucci, for valuable suggestions. The first and third authors are partially supported by CNPq-Brazil, as well as by FAPESQ-PB (2019/0014) and PROMISS\~OES-UFPI (010/2018), respectively. They also would like to thank the worm hospitality of the Abdus Salam International Centre for Theoretical Physics (ICTP), and of the Mathematisches Forschungsinstitut Oberwolfach (MFO), where part of this work was conducted.

%

\section{Basics on Finsler manifolds}\label{sec_prelim}

Let $M$ be an $m$-dimensional smooth manifold. As usual we denote by $TM \doteq \cup_{x\in M} T_x M$ the tangent bundle of $M$, where $T_x M$ means the tangent space at $x\in M$. Each element of $TM$ has the form $(x,p)$, where $x\in M$ and $p = p^i\frac{\partial}{\partial x^i} \in T_xM$. A Finsler structure on $M$ (cf. \cite{bao_chern_shen}) is a function $F : TM \rightarrow [0,\infty)$ satisfying the following properties:
\begin{enumerate}
\item[i)] \emph{Regularity:} $F$ is smooth on $TM\backslash 0$, with $0$ the zero section.
\item[ii)] \emph{Positive homogeneity:} $F(x,\lambda p) = \lambda F(x,p)$ for all $\lambda>0$.
\item[iii)] \emph{Strong convexity:} The fundamental tensor
	\[
	g_{ij}(x,p) := \frac{1}{2} \frac{\partial^2 F^2(x,p)}{\partial p^i \partial p^j}
	\]
is positive definite at every $(x,p) \in TM\backslash 0$.
\end{enumerate} 
Note that the expression $g_{ij}(x,p)p^i p^j$ is invariant by a change of coordinates.
%
We call a Finsler manifold the pair $(M,F)$, where $M$ is a smooth manifold and $F$ is a Finsler structure on $M$. Riemannian manifolds $(M,g)$ are a particular subclass of Finsler manifolds, obtained by choosing
	\[
	F(x,p) : = \sqrt{ g_{ij}(x)p^i p^j}.
	\] 
The induced Finsler structure $F^* : T^*M \ra [0, \infty)$ on the cotangent bundle is defined by 	
	\[
	F(x,\xi) \doteq \sup_{p \in T_xM\backslash 0}\frac{\xi(p)}{F(x,p)} = \sup_{F(x,p)=1}\xi(p),
	\]
and gives rise to a family of Minkowski norms $F^* = \{F^*_x\}_{x \in M}$ with corresponding fundamental tensor  	\[
	g^{* kl}(\xi) = \frac{1}{2}\frac{\partial^2 F^{* 2}(\xi)}{\partial \xi_k \partial \xi_l}.
	\]
Hereafter, we write $F(p), F^*(\xi)$ for notational convenience, suppressing the dependence on $x$. 
	%
%
We will use the Chern connection of $(M,F)$, defined on the vector bundle $\pi^{*}TM$, where $\pi : TM\backslash 0 \rightarrow M$ is the natural projection. Its connection forms are torsion free, that is,
$$ dx^j \wedge \omega^i_j = 0,$$
which means that $dp^k$ are absent in the definition of $\omega^i_j$, namely,
$$ \omega^i_j = \Gamma^i_{jk} dx^k, \quad \text{and} \quad \Gamma^i_{jk} = \Gamma^i_{kj}.$$

Let $\Omega \subset M$ be open and consider a coordinate system $(x^i,\frac{\partial}{\partial x^i})$ on $T\Omega$. Given a non-vanishing vector field $v = v_i \frac{\partial}{\partial x^i}$ on $\Omega$, we introduce a Riemannian metric $g_v$ and a linear connection $\nabla^v$ on $T \Omega$ by setting, for $p = p^i \frac{\partial}{\partial x^i}$ and $q = q^i \frac{\partial}{\partial x^i}$ in $T_x \Omega$, 
	\[
	g_v(p,q) \doteq p^i q^j g_{ij}(x,v), \quad \text{and} \quad \nabla^v_{\frac{\partial}{\partial x^i}} \frac{\partial}{\partial x^j} \doteq \Gamma^k_{ij}(x,v)\frac{\partial}{\partial x^k}.
	\]
We define the Legendre transformation $\ell : T M \rightarrow T^* M$ by
\[
\ell(p) = \left\lbrace
\begin{array}{ll}
g_{p}(p,\cdot), & p\not=0,\\
0, & p=0.
\end{array}\right.
\]
Remarkably, $\ell : TM\backslash 0 \rightarrow TM^*\backslash 0$ is a smooth diffeomorphism and 
	\[ 
	F^*(\ell(p)) = F(p), \quad \text{for all} \ \ p \in TM.
	\] 
Consequently, $g^{*ij}(\ell(p))$ coincides with the inverse of $g_{ij}(p)$ (see \cite{bao_chern_shen}, \cite{shen}), and the map $\ell^{-1}: T^*M \ra TM$ does exist. Given a smooth function $f: M \rightarrow \R$, we therefore define the gradient of $f$ as 
	\[
	\nabla f = \ell^{-1}(\di f).
	\]
In particular, note that
	\[
	\begin{array}{l}
	\di f(p) \le F^*(\di f)F(p) = F(\nabla f)F(p) \quad \forall \, f \in C^1(M), \ p \in TM, \\[0.2cm]
	\di f(p) = g_{\nabla f}(\nabla f,p) \ \ \text{on $\, \mathcal{R}_f= \big\{ x : \di_x f \neq 0\big\}$, for all} \ \ p \in TM.
	\end{array}
	\] 
Following \cite{wu_xin}, given a smooth function $f$ we define its Hessian $\Hess f$ on $\mathcal{R}_f$ by 
	\[
	\Hess f (V,W) \doteq V W(f) - \nabla^{\nabla f}_{V} W(f), \quad \text{for all} \ \ V, W \in T\mathcal{R}_f.
	\]
It is easy to see that $\Hess f$ is symmetric and can be rewritten as 
$$ \Hess f (V,W) = g_{\nabla f}\left(\nabla^{\nabla f}_{V}\nabla f, W\right).$$ 

An alternative construction is proposed in \cite{shen}, where the Hessian of $f$ is defined as the map
	\[
	D^2 f : TM \ra \R, \qquad D^2 f(p) \doteq \frac{d^2}{ds^2}\left(f\circ \gamma\right)_{|_{s=0}},
	\]
with $\gamma : (-\varepsilon,\varepsilon) \rightarrow M$ the geodesic satisfying $\gamma'(0) = p$. 
In \cite{wu_xin}, the authors point out that
	\[
	D^2 f(V) \equiv \Hess f (V,V), \quad \text{for all} \ \ V \in T\mathcal{R}_f.
	\]
	

\subsection{Forward and backward completeness}

For $x_0, x_1 \in M$, denote by $\Gamma(x_0,x_1)$ the collection of all piecewise smooth curves $\gamma : [a,b] \rightarrow (M,F)$ with $\gamma(a) = x_0$ and $\gamma(b) = x_1$. The distance $\di : M\times M \rightarrow [0,\infty)$ is defined by 
	\[
	\di(x_0,x_1) \doteq \inf_{\Gamma(x_0,x_1)}L(\gamma), \qquad \text{with } \, L(\gamma) : = \int_a^b F(\gamma'(t))\di t
	\]
the length of $\gamma$. Despite $\di$ is not a metric, the space $(M,\di)$ satisfies the first two axioms of a metric space:
\begin{enumerate}
\item $\di(x_0,x_1) \geq 0$, with equality holding iff $x_0 = x_1$.
\item $\di(x_0,x_2) \leq \di(x_0,x_1) + \di(x_1,x_2)$.
\end{enumerate}
The symmetry $\di(x_0,x_1) = \di(x_1,x_0)$ is satisfied whenever the Finsler structure $F$ is absolutely homogeneous, that is $F(\lambda p) = \lambda F(p)$ for every $\lambda \in \R$. In this case, $(M,\di)$ is a genuine metric space.

For $\bar x \in M$ fixed, and $r>0$, we define on $T_{\bar x}M$ the tangent balls and spheres of radius $r$
$$ 
B_{\bar x}(r) : = \big\{p \in T_{\bar x} M : F(\bar x,p) < r \big\}, \quad S_{\bar x}(r) : = \big\{p \in T_{\bar x} M : F(\bar x,p) = r\big\},
$$
and the corresponding \emph{forward metric balls and spheres} 
$$ 
\mathcal{B}^{+}_{\bar x}(r) : = \big\{x\in M : \di(\bar x,x) <r\big\}, \quad \mathcal{S}^{+}_{\bar x}(r) : = \disp \big\{x\in M : \di(\bar x,x) = r \big\}. 
$$ 
The associated backward balls and spheres
	\[
	\mathcal{B}^{-}_{\bar x}(r) : = \big\{x\in M : \di(x,\bar x) <r\big\}, \quad \mathcal{S}^{-}_{\bar x}(r) : = \disp \big\{x\in M : \di(x,\bar x) = r \big\} 
	\]
coincide with the forward balls of the dual Finsler structure $\widetilde{F}$. As proved in Section 6.2 C of \cite{bao_chern_shen}, the topology of the underlying manifold and that generated by the forward balls coincide. Hence we can state that a sequence $x_i \ra x$ in $M$ if, given any open set $O \ni x$, there is a positive integer $N$ (depending on $O$) such that $x_i \in O$ whenever $i\geq N$. According to Lemma 6.2.1 in \cite{bao_chern_shen}, for a fixed point $x_0 \in M$ there exist an open neighbourhood $U$ and a constant $\alpha >1$, depending on $x_0$ and $U$, such that
\begin{equation}\label{reversible_metric_ineq}
\frac{1}{\alpha}\di(x_2,x_1) \leq \di(x_1,x_2) \leq \alpha \di(x_2,x_1) \qquad \forall \, x_1, x_2 \in U.
\end{equation}
Therefore, the statements 
$$ 
x_i \rightarrow x, \qquad \di(x,x_i) \rightarrow 0, \qquad \di(x_i,x) \rightarrow 0$$
are equivalent. However, this is not the case in general for Cauchy sequences.

\begin{definition}
A sequence $\{x_i\}$ in $M$ is called a \emph{forward (resp., backward) Cauchy sequence} if, for all $\varepsilon>0$, there exists a positive integer $j_\eps$ (depending on $\varepsilon$) such that
$$ 
j_\eps \leq i < j \Longrightarrow \di(x_i,x_j) < \varepsilon \qquad [\text{resp.}, \di(x_j,x_i) < \varepsilon].
$$
\end{definition}

\begin{definition}
A Finsler manifold $(M,F)$ is said to be \emph{forward complete} if every forward Cauchy sequence converges in $M$. It is said to be \emph{backward complete} if every backward Cauchy sequence converges.
\end{definition}

A geodesic $\gamma$ from $\bar x$ to $x$ is a curve that is stationary for $L$. It can (and will henceforth) be reparametrized via an affine map to have constant velocity $F(\gamma') \equiv 1$. The exponential map $\exp_{\bar x}$ associates to $v \in T_{\bar x}M$ the value $\gamma_v(1)$ of the unique forward geodesic $\gamma_v$ issuing from $\bar x$ with constant velocity $F(v)$. The following result summarizes the minimizing properties of short geodesics that we need.
%

\begin{theorem}
Let $(M,F)$ be a Finsler manifold. Then, for a given compact set $K$, there exists $\eps>0$ such that
\begin{enumerate}
\item[1)] \cite[pp. 126-127]{bao_chern_shen} The map
	\[\exp \ \ : \ \big\{ v \in TK : F(v) < \eps\big\} \ra M, \qquad \exp(x,v) = \exp_x(v)
	\] 
is a $C^1$-diffeomorphism onto its image, and $C^\infty$ outside of the zero section. 
\end{enumerate}
Fix a point $\bar x$ and suppose that, for some $r, \eps>0$, $\exp_{\bar x}$ is a $C^1$-diffeomorphism from the tangent ball $B_{\bar x}(r+\varepsilon)$ onto its image (we call these balls \emph{regular}). Then:
\begin{enumerate}
\item[2)] \cite[Thm. 6.3.1]{bao_chern_shen} Each radial geodesic $\exp_{\bar x}(tv)$, $0\leq t\leq r$, $F(\bar x,v) = 1$ is the unique curve that minimizes distance among all piecewise $C^\infty$ curves in $M$ with the same endpoits.
\end{enumerate}
\end{theorem}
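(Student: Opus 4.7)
The plan is to follow the classical approach of Bao--Chern--Shen cited by the authors. For part $1)$, I would begin from the local form of the geodesic equation $\ddot\gamma^i + \Gamma^i_{jk}(\gamma,\dot\gamma)\dot\gamma^j\dot\gamma^k = 0$, whose Chern connection coefficients $\Gamma^i_{jk}(x,p)$ are smooth on $TM\setminus 0$ but only continuous on the zero section, being positively homogeneous of degree zero in $p$. Standard smooth ODE theory on $TM\setminus 0$ yields a $C^\infty$ geodesic flow, hence $\exp$ is automatically $C^\infty$ off the zero section. To extend across the zero section with $C^1$ regularity one uses the positive homogeneity identity $\gamma_{\lambda v}(t) = \gamma_v(\lambda t)$, which, combined with a rescaling analysis of the geodesic spray, shows that $\exp$ extends continuously with differential equal to the identity at $v=0$ under the canonical identification $T_0 T_x M \simeq T_x M$. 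The $C^1$ inverse function theorem then produces a local $C^1$-diffeomorphism around each point of the zero section over $K$, and compactness of $K$ selects a uniform $\eps$.

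For part $2)$, the key tool is the Finslerian Gauss lemma: for a unit vector $v\in T_{\bar x}M$ and the radial geodesic $\gamma(t)=\exp_{\bar x}(tv)$, one has $g_{\dot\gamma}(\dot\gamma,W)=0$ for every vector $W$ tangent to the geodesic sphere $\mathcal{S}^+_{\bar x}(r)$. Taking this for granted, let $\sigma:[a,b]\to M$ be any piecewise $C^\infty$ curve from $\bar x$ to $\exp_{\bar x}(rv)$. One may assume $\sigma$ stays inside $\exp_{\bar x}(B_{\bar x}(r+\eps))$, for otherwise, truncating at the first exit time, a continuity argument already gives length $>r$. Using the $C^1$-diffeomorphism from $1)$, pull $\sigma$ back to a curve $\tilde\sigma(t)=s(t)u(t)$ with $F(u(t))=1$, decompose $\dot\sigma$ into its radial and tangential parts, and invoke the Gauss lemma with respect to the osculating inner product $g_{\dot\gamma}$ to obtain $F(\dot\sigma(t))^2 \ge s'(t)^2$. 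Integrating gives
\[
L(\sigma) \ge \int_a^b |s'(t)|\,\di t \ge s(b)-s(a) = r.
\]
Equality forces the tangential component to vanish and $s'(t)\ge 0$ throughout, so $\sigma$ is a monotone reparametrization of the radial geodesic, giving uniqueness.

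The main obstacle is the zero-section regularity in part $1)$: since the Chern coefficients are only continuous on the zero section, smooth ODE theory is not directly available, and the required $C^1$-extension of $\exp$ across $\{v=0\}$ hinges on the positive-homogeneity rescaling of the spray. This is precisely the point where the Finsler case departs from the Riemannian one, and where one must invest the real work. Within part $2)$, the technical heart lies in establishing the Gauss lemma itself; I would derive it by differentiating the energy of a variation through radial geodesics of constant $F$-length and using the defining equation $\nabla^{\dot\gamma}_{\dot\gamma}\dot\gamma=0$ of Chern geodesics together with the symmetry of the fundamental tensor $g_{ij}(x,p)$ under its homogeneity relations.
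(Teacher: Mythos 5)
The paper offers no proof of this statement at all—both parts are quoted directly from Bao--Chern--Shen (pp.~126--127 and Thm.~6.3.1)—and your outline is precisely the standard argument of that reference: smoothness of the geodesic flow off the zero section plus a homogeneity/rescaling analysis of the spray to get $C^1$ regularity of $\exp$ with $d\exp = \mathrm{id}$ at $0$, followed by the Finslerian Gauss lemma and the polar decomposition of a competitor curve for the minimality statement. One small imprecision worth noting: the Chern coefficients $\Gamma^i_{jk}(x,p)$, being positively homogeneous of degree zero in $p$, in general do \emph{not} extend continuously to the zero section; what is $C^1$ there is the geodesic spray $G^i(x,p)=\Gamma^i_{jk}(x,p)p^jp^k$, homogeneous of degree two, and it is this quadratic structure (not continuity of the $\Gamma$'s) that drives the $C^1$-extension of $\exp$ across $\{p=0\}$, exactly as in the cited source.
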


%

The corresponding behaviour of the distance function from (or towards) a fixed origin $\bar x \in M$ on small balls has been described in \cite{shen}, Lemma 3.2.4, and in \cite{wu_xin}, equation $(4.1)$. Summarizing, we have

\begin{proposition}\cite{shen, wu_xin}\label{prop_nabla_hess}
Let $(M,F)$ be a Finsler manifold, let $r>0$ be such that $\mathcal{B}_{\bar x}^+(r)$ and $\mathcal{B}_{\bar x}^-(r)$ are regular geodesic balls. Then, the functions
	\[
	\varrho^+(y) = \di(\bar x, y), \qquad \varrho^-(y) = - \di(y, \bar x)
	\]
are smooth on, respectively, $\mathcal{B}_{\bar x}^+(r)\backslash \{\bar x\}$ and $\mathcal{B}_{\bar x}^-(r)\backslash \{\bar x\}$, and there they satisfy
	\[
	F(\nabla \varrho^{\pm}) = 1, \qquad \Hess \varrho^\pm (\nabla \varrho^\pm, \nabla \varrho^\pm) = 0. 	
	\]
\end{proposition}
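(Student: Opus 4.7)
The plan is to derive everything from the fact that, on a regular forward ball, $\exp_{\bar x}$ identifies $\varrho^+$ with the radial coordinate of a polar chart. I would first observe that by part (2) of the preceding theorem, every $y \in \mathcal{B}^+_{\bar x}(r) \setminus \{\bar x\}$ is joined to $\bar x$ by a unique unit-speed minimizing geodesic $\gamma_y$, and $\varrho^+(y) = F(\exp_{\bar x}^{-1}(y))$. Since $\exp_{\bar x}$ is a $C^1$ diffeomorphism on the tangent ball, smooth off the zero section, and $F$ is smooth on $TM \setminus 0$, the composition is smooth on the punctured ball.

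Next I would prove the eikonal identity $F(\nabla\varrho^+) = 1$, equivalently $F^*(\di\varrho^+) = 1$. The triangle inequality gives $\varrho^+(y') - \varrho^+(y) \le \di(y,y')$, so for any $p \in T_yM$ one has $\di\varrho^+(p) \le F(p)$, hence $F^*(\di\varrho^+) \le 1$. For the reverse inequality, evaluate at $\gamma_y'(t)$: because $\varrho^+ \circ \gamma_y(t) = t$ and $F(\gamma_y') \equiv 1$, we get $\di\varrho^+(\gamma_y') = 1$, so the sup in the definition of $F^*$ is attained and equals $1$.

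For the Hessian identity, the key step is to identify $\nabla \varrho^+$ with $\gamma_y'$ along the radial geodesic. By the Legendre transform, $\nabla \varrho^+ = \ell^{-1}(\di\varrho^+)$ is the unique $p \in T_yM$ with $F(p) = F^*(\di\varrho^+) = 1$ and $\di\varrho^+(p) = F^*(\di\varrho^+)^2 = 1$; since $\gamma_y'$ satisfies both, $\nabla\varrho^+ = \gamma_y'$ along $\gamma_y$. Then, using $\Hess \varrho^+(V,W) = g_{\nabla\varrho^+}(\nabla^{\nabla\varrho^+}_V \nabla\varrho^+, W)$ and the fact that $\gamma_y$ is a geodesic, so its velocity is parallel along itself with respect to its own reference vector, i.e.\ $\nabla^{\gamma_y'}_{\gamma_y'}\gamma_y' = 0$, one concludes
\[
\Hess\varrho^+(\nabla\varrho^+,\nabla\varrho^+) = g_{\gamma_y'}\bigl(\nabla^{\gamma_y'}_{\gamma_y'}\gamma_y',\gamma_y'\bigr) = 0.
\]

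The main technical subtlety, and the place I would have to be careful, is the identification $\nabla\varrho^+ = \gamma_y'$: in the Finsler setting neither direction of the Cauchy--Schwarz inequality is symmetric, and the Legendre characterization must be invoked correctly. Once that is secured, the argument for $\varrho^-$ is a direct application of the same reasoning to the dual Finsler structure $\widetilde F(p) := F(-p)$, whose forward geodesics are the backward geodesics of $(M,F)$ and whose forward balls coincide with the backward balls $\mathcal{B}^-_{\bar x}(r)$; this reduces the backward statement to the forward one already proved.
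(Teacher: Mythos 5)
Your argument is correct and substantially more self-contained than the paper's brief sketch. The paper cites Shen's Lemma 3.2.4 for the eikonal identity $F(\nabla\varrho^\pm)=1$ and then applies the $D^2 f$ formulation of the Hessian to the radial geodesic $\gamma$ with initial velocity $\nabla\varrho^-(y)$, observing that $\varrho^-\circ\gamma$ is affine in $t$. You instead prove the eikonal identity from scratch (triangle inequality for the upper bound, evaluation along the radial geodesic for the lower bound), and you make explicit the identification $\nabla\varrho^+=\gamma_y'$ via the Legendre characterization — a step the paper takes for granted when it speaks of ``a geodesic from $y$ to $\bar x$ with initial velocity $\nabla\varrho^-(y)$.'' Your Hessian computation then uses the equivalent formula $\Hess f(V,W)=g_{\nabla f}(\nabla^{\nabla f}_V\nabla f,W)$ together with the geodesic equation $\nabla^{\gamma_y'}_{\gamma_y'}\gamma_y'=0$; both are valid since $\nabla\varrho^+$ coincides with the radial field along $\gamma_y$, so the covariant derivative only sees the curve $\gamma_y$. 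Everything here is correct, and your care about the asymmetric Cauchy--Schwarz and Legendre dual is precisely the subtlety worth spelling out.

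The one soft spot is the last paragraph. Reducing the $\varrho^-$ case to the forward statement for $\widetilde F$ is not quite immediate for the Hessian: the proposition's Hessian is taken with respect to the Chern connection of $F$, while the forward result applied to $\widetilde F$ controls $\widetilde\Hess$, taken with respect to the Chern connection of $\widetilde F$. These agree only after the small extra observation that $\Hess f(-p,-p)=\widetilde\Hess f(p,p)$, which follows from the $D^2$ formulation and the fact that reversing an $\widetilde F$-geodesic produces an $F$-geodesic, together with $\nabla\varrho^-=-\widetilde\nabla\widetilde\varrho^+$. Alternatively — and this is in fact what the paper does — you can bypass the duality entirely: the identical Legendre argument identifies $\nabla\varrho^-(y)$ with the initial velocity $\sigma'(0)$ of the $F$-minimizing geodesic $\sigma$ from $y$ to $\bar x$, and then $\varrho^-\circ\sigma(t)=t-\di(y,\bar x)$ is affine and the geodesic equation finishes the proof. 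Either route is fine, but as written the reduction to $\widetilde F$ glosses over the comparison of the two Chern connections.
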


Indeed, the identity $F(\nabla \varrho^\pm) = 1$ is proved in \cite[Lem 3.2.4]{shen}, while for the Hessian identity we observe the following: if $\gamma : [0, \di(y, \bar x)] \ra \mathcal{B}_{\bar x}^-(r)$ is a geodesic from $y$ to $\bar x$ with initial velocity $\nabla \varrho^-(y)$, then $\varrho^-(\gamma(t)) = - \di(\gamma(t),\bar x) = t - \di(y,\bar x)$ and thus
	\[
	\Hess \varrho^-(\nabla \varrho^-, \nabla \varrho^-) = \frac{\di^2}{\di t^2} \varrho^-(\gamma(t)) = 0.
	\]
	
Regarding the behaviour of long minimizing geodesics, we have the following Hopf-Rinow type theorem due to Cohn-Vossen \cite{cohnvossen} (cf. also \cite{mennucci, mennucci_2} for more general statements, also considering Finsler metrics constructed from Hamilton-Jacobi equations).

%

\begin{theorem}[\cite{cohnvossen}, see Theorem 6.6.1 in \cite{bao_chern_shen}]
Let $(M,F)$ be a connected Finsler manifold. The following properties are equivalent:
\begin{enumerate}
\item[1.] $(M,F)$ is forward complete.
\item[2.] $(M,F)$ is forward geodesically complete, that is, every geodesic $\gamma(t)$, $a\leq t\leq b$, parametrized to have constant speed, can be extended to a geodesic defined on $a\leq t< \infty$.
\item[3.] For some/every $x\in M$, $\exp_x$ is defined on all of $T_xM$.
\item[4.] Every closed and forward bounded subset $K \subset M$ (in the sense that $K$ is contained into some forward ball) is compact.
\end{enumerate}
Furthermore, if any of the above holds, then every pair of points in $M$ can be joined by a minimizing geodesic. 
\end{theorem}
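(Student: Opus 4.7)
The plan is to follow the classical Hopf-Rinow-Cohn-Vossen scheme adapted to the asymmetric setting, proving the cycle $4) \Rightarrow 1) \Rightarrow 2) \Rightarrow 3) \Rightarrow 4)$ and extracting the existence of a minimizing geodesic as a byproduct of the last implication. The implications $4) \Rightarrow 1) \Rightarrow 2) \Rightarrow 3)$ are short. For $4) \Rightarrow 1)$, a forward Cauchy sequence $\{x_i\}$ is eventually contained in a forward ball $\mathcal{B}^+_{x_N}(1)$; its closure is forward-bounded, hence compact by hypothesis, and a Cauchy sequence with a convergent subsequence converges. For $1) \Rightarrow 2)$, if a unit-speed geodesic $\gamma:[a,b) \ra M$ is maximal with $b < \infty$, then $\di(\gamma(t_n), \gamma(t_m)) \le t_m - t_n$ for any $t_n \uparrow b$ with $n \le m$ shows that $\{\gamma(t_n)\}$ is forward Cauchy; the limit point $p$ together with part 1 of the preceding theorem at $p$ supplies a local extension past $b$, a contradiction. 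For $2) \Rightarrow 3)$, the domain of every maximal geodesic issuing from $x$ is $[0, \infty)$.

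The main obstacle is $3) \Rightarrow 4)$, and its technical core is the existence of a minimizing geodesic from a fixed $\bar x$ to every $y \in M$. Once this is established, every $y$ with $\di(\bar x, y) \le R$ can be written $y = \exp_{\bar x}(v)$ for some $v \in T_{\bar x}M$ with $F(v) \le R$; consequently the closed forward ball $\overline{\mathcal{B}^+_{\bar x}(R)}$ is the continuous image of the compact tangent ball $\{v \in T_{\bar x}M : F(v) \le R\}$, hence compact, and any closed forward-bounded $K$ sits as a closed subset of such a compact set.

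To construct a minimizer from $\bar x$ to $y$ with $r := \di(\bar x, y)$, pick $\delta > 0$ so small that $\overline{\mathcal{B}^+_{\bar x}(\delta)}$ is regular. The forward sphere $\mathcal{S}^+_{\bar x}(\delta)$ is then the diffeomorphic image of the compact tangent sphere $S_{\bar x}(\delta)$, and the continuous function $z \mapsto \di(z, y)$ attains its infimum at some $p_0 = \exp_{\bar x}(\delta v)$, $F(v) = 1$. A standard argument using that every piecewise smooth curve from $\bar x$ to $y$ must cross $\mathcal{S}^+_{\bar x}(\delta)$ yields $\di(p_0, y) = r - \delta$. Setting $\gamma(t) = \exp_{\bar x}(tv)$, which is defined for all $t \ge 0$ by $3)$, I would consider
\[
T := \sup\bigl\{ t \in [0, r] : \di(\gamma(t), y) + t = r \bigr\},
\]
attained by continuity. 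If $T < r$, repeating the forward-sphere argument at $\gamma(T)$ with radius $\delta'$ produces a point $q$ with $\di(\gamma(T), q) = \delta'$ and $\di(q, y) = r - T - \delta'$; the resulting broken curve from $\bar x$ to $q$ has length $T + \delta' = \di(\bar x, q)$, so it is a forward minimizer, and by the uniqueness of short minimizing geodesics (part 2 of the preceding theorem) it must be smooth, forcing $q = \gamma(T + \delta')$ and contradicting the maximality of $T$.

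The hard part is exactly this last smoothing step: in the Finsler setting one must use the uniqueness of short forward minimizers and the fact that $\exp_{\bar x}$ is a local $C^1$-diffeomorphism in the \emph{forward} direction, features that rely only on positive homogeneity and strong convexity of $F$ and not on symmetry. A secondary subtle point is to check that throughout the argument, distances are taken in the correct direction (always from $\bar x$ outward and from the intermediate points toward $y$), since the triangle inequality $\di(\bar x, q) \le \di(\bar x, \gamma(T)) + \di(\gamma(T), q)$ respects the orientation of $\di$ but its reverse is generally false.
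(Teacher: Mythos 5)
Your proof is correct and follows the classical Hopf--Rinow--Cohn-Vossen scheme, which is precisely what the cited reference (Theorem 6.6.1 in Bao--Chern--Shen) does; the paper itself states this result only by citation, without reproving it. The one place you compress is the smoothing step in $3)\Rightarrow 4)$: the uniqueness of short minimizers is applied not to the whole broken curve from $\bar x$ to $q$ but to a small subarc around the corner $\gamma(T)$ (any subarc of a minimizer is again a minimizer, and a short enough subarc lies in a regular forward ball), which is exactly the argument your parenthetical remark gestures at.
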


%


\section{Viscosity solutions}\label{sec_viscosity}

Hereafter, given a test function $\phi$ regular enough, with $\phi \prec_x u$ (resp., $\phi \succ_x u$) we mean that $\phi$ is defined in a neighbourhood of $x$, $\phi \le u$ (resp. $\phi \ge u$) and $\phi(x) = u(x)$. We start by recalling the definition of subsolutions for the eikonal equations. 

\begin{definition}
Given $\Omega \subset M$ open and $G \in C(\Omega \times \R)$, we say that 
\begin{itemize}
\item[1.] $u \in \USC(\Omega)$ is a viscosity subsolution of 
	\[
	F(\nabla u) - G(x,u) = 0 \quad \text{on } \, \Omega
	\]
if, for every $x \in \Omega$ and test function $\phi \succ_x u$ of class $C^1$ it holds $F(\nabla \phi) - G(x,\phi) \le 0$ at $x$.
\item[2.] $u \in \USC(\Omega)$ is a viscosity subsolution of 
	\[
	G(x,u)- F(\nabla u) = 0 \quad \text{on } \, \Omega
	\]
if, for every $x \in \Omega$ and test function $\phi \succ_x u$ of class $C^1$ it holds $G(x,\phi)- F(\nabla \phi) \le 0$ at $x$.
\end{itemize}
\end{definition}

\noindent Next, for $\phi \in C^2(\Omega)$ we define
\[
\Delta^{N,+}_{\infty} \phi(x) = \left\lbrace
\begin{array}{ll}
\Hess \phi\left(\frac{\nabla \phi}{F(\nabla \phi)},\frac{\nabla \phi}{F(\nabla \phi)}\right), & \text{if} \ \ \di_x \phi \not= 0,\\[0.2cm]
\max\left\lbrace D^2 \phi(p,p) : F(p) = 1\right\rbrace, &  \text{if} \ \ \di_x \phi = 0.
\end{array}\right.
\]
and
	\[
	\Delta^{N,-}_{\infty} \phi(x) = \left\lbrace
	\begin{array}{ll}
	\Hess \phi\left(\frac{\nabla \phi}{F(\nabla \phi)},\frac{\nabla \phi}{F(\nabla \phi)}\right), & \text{if} \ \ \di_x \phi \neq 0,\\[0.2cm]
	\min\left\lbrace D^2 \phi(p,p) : F(p) = 1\right\rbrace, &  \text{if} \ \ \di_x \phi = 0.
	\end{array}\right.
	\]


\begin{definition}
Let $\Omega \subset M$ be open, and let $f : \mathbb{R}\times T^*\Omega \rightarrow \mathbb{R}$ be a continuous function (the dependence of $f$ on $x \in \Omega$ is implicit when writing $T^*\Omega$). 
\begin{enumerate}
\item[1.] A function $u \in \USC(\Omega)$ is said to solve $\Delta^N_\infty u \ge f(u,\di u)$ 
	\begin{itemize}
		\item[$\bullet$] in the \emph{viscosity sense} if, for every $x \in \Omega$ and every test function $\phi \succ_x u$ of class $C^2$,
	\[
	\Delta^{N,+}_{\infty} \phi \geq f(\phi(x), \di \phi(x));
	\]
		\item[$\bullet$] in the \emph{barrier sense} if, for every $x \in \Omega$, there exists $u_\eps \in C^2$ with $u_\eps \prec_x u$ and 
	\[
	\Delta_\infty^{N,+} u_\eps \ge f(u_\eps(x), \di u_\eps(x)) - \eps.
	\]	
	\end{itemize}
	In these cases, we also say that $u$ is a subsolution (in the viscosity/barrier sense). 	
\item[2.] A function $u \in \LSC(\Omega)$ is said to solve $\Delta^N_\infty u \le f(u,\di u)$ 
	\begin{itemize}
		\item[$\bullet$] in the \emph{viscosity sense} if, for every $x \in \Omega$ and every test function $\phi \prec_x u$ of class $C^2$, 
	\[
	\Delta^{N,-}_{\infty} \phi \leq f(\phi(x),\di \phi(x));
	\]
		\item[$\bullet$] in the \emph{barrier sense} if, for every $x \in \Omega$, there exists $u_\eps \in C^2$ with $u_\eps \succ_x u$ and 
	\[
	\Delta_\infty^{N,-} u_\eps \le f(u_\eps(x), \di u_\eps(x)) + \eps.
	\]	
	\end{itemize}
	In these cases, we also say that $u$ is a supersolution (in the viscosity/barrier sense).	
\item[3.] A function $u \in C(\Omega)$ is said to solve 
\begin{equation}\label{main_equation}
\Delta_{\infty}^N u = f(u,\di u) \quad \text{on } \, \Omega
\end{equation}
(in the viscosity/barrier sense) if it is both a subsolution and a supersolution. 
\end{enumerate}
\end{definition}

\begin{remark}
\emph{If $u$ is a subsolution (resp. a supersolution) in the barrier sense, and $f$ is continuous, then $u$ is also a subsolution (supersolution) in the viscosity sense. However, the converse is not necessarily true. 
}
\end{remark}


In the following proposition we state useful properties satisfied by $\infty$-Laplacian subsolutions, that in our needed generality (the operator is discontinuous) can be found in \cite[Thm. 2.6]{HL_dir} and \cite[Prop. 3.7]{mebrate_mohammed}.

	
\begin{proposition}\label{elementary_prop_sub}
Let $\Omega \subset M$ be a bounded subset and $f \in C(\R\times T^*\Omega)$.
\begin{enumerate}
\item[i)] If $u, v \in \USC(\Omega)$ are subsolutions of \eqref{main_equation}, then $\max\{u,v\}$ is also a subsolution of \eqref{main_equation}.
\item[ii)] {\rm (Stability)} If $\{u_k\} \subset \USC(\Omega)$ is a sequence of \emph{viscosity} subsolutions of \eqref{main_equation}, and $u_k \rightarrow u$ converges locally uniformly in $\Omega$, then $u$ is also a viscosity subsolution of \eqref{main_equation}.
\end{enumerate}
\end{proposition}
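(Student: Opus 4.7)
The plan is to handle (i) as a one-line consequence of the definitions, and to prove the stability statement (ii) via the classical perturbation argument for viscosity solutions, the only Finsler-specific ingredient being the upper semi-continuity of the map $x \mapsto \Delta_\infty^{N,+}\phi(x)$ for any fixed $\phi \in C^2(\Omega)$.

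For (i), I fix $x \in \Omega$ and a $C^2$ test function $\phi \succ_x \max\{u,v\}$. Since $\max\{u,v\}(x)$ equals one of $u(x), v(x)$, say $u(x)$ (the other case is symmetric), the inequalities $\phi \ge \max\{u,v\} \ge u$ together with $\phi(x) = u(x)$ yield $\phi \succ_x u$, so the subsolution property of $u$ gives $\Delta_\infty^{N,+}\phi(x) \ge f(\phi(x), d_x\phi)$, which is what we want. The barrier formulation is identical: any barrier $u_\varepsilon \prec_x u$ automatically satisfies $u_\varepsilon \prec_x \max\{u,v\}$ whenever $\max\{u,v\}(x) = u(x)$.

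For (ii), I first verify that $x \mapsto \Delta_\infty^{N,+}\phi(x)$ is USC for any fixed $\phi \in C^2$. On the open set where $d\phi \ne 0$ it is continuous; at a point $x_0$ with $d_{x_0}\phi = 0$ and a sequence $x_k \to x_0$ with $d_{x_k}\phi \ne 0$, working in a fixed chart the unit vectors $\nabla\phi(x_k)/F(\nabla\phi(x_k))$ lie in a compact set and subconverge to some $v$ with $F(x_0,v) = 1$, whence
\[
\limsup_{k \to \infty} \Delta_\infty^{N,+}\phi(x_k) \le D^2\phi(x_0)(v,v) \le \max\{D^2\phi(x_0)(p,p) : F(x_0,p) = 1\} = \Delta_\infty^{N,+}\phi(x_0);
\]
a subsequence on which $d_{x_k}\phi = 0$ is handled by Berge's maximum theorem applied to the smoothly varying constraint $\{F(x,\cdot) = 1\}$. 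Then, given $x \in \Omega$ and $\phi \succ_x u$ of class $C^2$, I set $\phi_\tau := \phi + \tau \psi$, where $\psi$ is a fixed nonnegative smooth function vanishing only at $x$ and chosen with $d_x\psi = 0$ (e.g.\ $\psi(y) = \sum_i (y^i - x^i)^2$ in a local chart). For $\tau > 0$ small, $u - \phi_\tau$ has a strict local maximum at $x$, so the uniform convergence $u_k \to u$ on a small closed ball $\bar{B}$ around $x$ produces maximizers $x_k \in \bar{B}$ of $u_k - \phi_\tau$ with $x_k \to x$, which are interior points for $k$ large. The viscosity subsolution property of $u_k$ at $x_k$ yields $\Delta_\infty^{N,+}\phi_\tau(x_k) \ge f(u_k(x_k), d_{x_k}\phi_\tau)$, and passing to the limit using continuity of $f$, $u_k(x_k) \to u(x)$, and the USC just proved gives $\Delta_\infty^{N,+}\phi_\tau(x) \ge f(u(x), d_x\phi_\tau)$. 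Letting $\tau \to 0$ recovers the desired inequality for $\phi$ itself.

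The main obstacle is precisely the discontinuity of the operator at gradient-vanishing points: the $\max$ convention in the definition of $\Delta_\infty^{N,+}$ is exactly what keeps the USC inequality pointing in the correct direction for the stability argument to close, while the dual $\min$ convention for $\Delta_\infty^{N,-}$ ensures the analogous LSC property needed for stability of supersolutions.
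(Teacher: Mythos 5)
Your argument is correct, and in fact it supplies a proof that the paper does not give: the paper only points to \cite[Thm. 2.6]{HL_dir} and \cite[Prop. 3.7]{mebrate_mohammed} for this proposition, so the Finsler-specific verification you provide is genuine added value. You rightly isolate the one nontrivial point, namely the upper semicontinuity of $x \mapsto \Delta_\infty^{N,+}\phi(x)$ across the gradient-vanishing set; the rest is the standard perturbation argument, and the treatment of the locally uniform limit via the strictified test function $\phi_\tau = \phi + \tau\psi$ with $d_x\psi = 0$ is exactly what is needed. Two small points are worth spelling out, though neither is a gap. First, the step $\Hess\phi(x_k)(\nu_k,\nu_k) \to D^2\phi(x_0)(v,v)$ uses the identity $\Hess\phi(V,V) = D^2\phi(V)$ along the (normalized) gradient direction, which in turn relies on the degree-zero homogeneity of the Chern connection coefficients $\Gamma^k_{ij}(x,\cdot)$ and on $2G^k(x,y) = \Gamma^k_{ij}(x,y)y^iy^j$; since the paper records this identity, you may simply invoke it, but a blind reader might not see why the Hessian (built with reference vector $\nabla\phi(x_k)$) converges to $D^2\phi$ as $\nabla\phi(x_k) \to 0$. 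Second, the final step ``letting $\tau \to 0$'' deserves a word: precisely because $d_x\psi = 0$, one has $\nabla\phi_\tau(x) = \nabla\phi(x)$ and hence the reference vector for the Chern connection at $x$ is unchanged, so $\Delta_\infty^{N,+}\phi_\tau(x)$ is an affine function of $\tau$ (both when $d_x\phi \neq 0$, via the Hessian formula, and when $d_x\phi = 0$, via the maximum over the unit sphere), which converges to $\Delta_\infty^{N,+}\phi(x)$ as $\tau \to 0$. With these two remarks made explicit, the proof is complete and self-contained.
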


%

\subsection{Calabi's trick} 

We begin with a chain rule for the $\infty$-Laplacian. Let $\eta \in C^2(\mathbb{R})$ and $\phi \in C^2(\Omega)$, where $\Omega \subset M$ is an open set. Since the function $w = \eta \circ \phi$ solves
\begin{equation}\label{chain_rule}
\Delta^{N, \pm}_{\infty} w = \eta''(\phi)F^2(\nabla \phi) + \eta'(\phi) \Delta^{N,\pm}_{\infty} \phi \qquad \text{on } \, \Omega^* = \Big\{ x \in \Omega : \eta'(\phi(x)) >0\Big\},
\end{equation}
a direct check shows the following

\begin{proposition}\label{prop_composition}
Let $u \in \USC(\Omega)$ (resp., $\LSC(\Omega))$ be a subsolution (resp., a supersolution) of \eqref{main_equation}, and let $\eta \in C^2(\mathbb{R})$. On the set $\Omega^* = \{ x \in \Omega : \eta'(u) >0\}$, the function $w = \eta \circ u$ is a viscosity subsolution (resp., supersolution) of 
	\[
	\Delta_{\infty}^N w = \eta''(u) F^2(\nabla u) + \eta'(u)f(u,\di u). 
	\]
\end{proposition}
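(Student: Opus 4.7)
The strategy is a direct application of the $C^2$ chain rule \eqref{chain_rule} combined with a viscosity-theoretic change of test function, which is the essence of \emph{Calabi's trick} in this context: to probe $w = \eta\circ u$ at a point where $u$ itself may be non-smooth, one pulls a smooth test function $\phi$ for $w$ back through the local inverse of $\eta$ and reads off the conclusion from the subsolution property of $u$.

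I will treat the subsolution case; the supersolution case is identical after reversing the relevant inequalities and testing from below. Fix $x\in\Omega^*$ and a $C^2$ test function $\phi\succ_x w$. Since $\eta'(u(x))>0$, there is $\delta>0$ such that $\eta$ is a $C^2$-diffeomorphism from $I_\delta:=(u(x)-\delta,u(x)+\delta)$ onto $\eta(I_\delta)$, with $C^2$ inverse $\eta^{-1}$. By continuity of $\phi$, on a small enough neighbourhood $U$ of $x$ we have $\phi(U)\subset\eta(I_\delta)$, so $\psi:=\eta^{-1}\circ\phi\in C^2(U)$ is well defined, and clearly $\psi(x)=u(x)$.

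Next I need to check that $\psi\succ_x u$. For $y\in U$ we distinguish two cases. If $u(y)\in I_\delta$, then $\eta(u(y))=w(y)\le\phi(y)$ and both sides lie in $\eta(I_\delta)$; monotonicity of $\eta^{-1}$ on $\eta(I_\delta)$ then gives $u(y)\le\eta^{-1}(\phi(y))=\psi(y)$. Otherwise, upper semicontinuity of $u$ forces $u(y)\le u(x)-\delta$ (after possibly shrinking $U$ so that $u(y)<u(x)+\delta$), while continuity of $\psi$ gives $\psi(y)\ge u(x)-\delta/2>u(y)$. Hence $\psi\succ_x u$, and the viscosity subsolution property of $u$ yields
\[
\Delta^{N,+}_\infty\psi(x)\;\ge\;f\big(u(x),\di\psi(x)\big).
\]

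Finally I apply the classical chain rule \eqref{chain_rule} to the $C^2$ composition $\phi=\eta\circ\psi$, noting that $\eta'(\psi(x))=\eta'(u(x))>0$ guarantees that the $\Delta^{N,+}_\infty$ selection on the left matches that on the right (in particular $\di\phi(x)=0$ iff $\di\psi(x)=0$, in which case the maximum over $\{F(p)=1\}$ scales by the positive factor $\eta'(u(x))$). This gives
\[
\Delta^{N,+}_\infty\phi(x)=\eta''(u(x))\,F^2(\nabla\psi(x))+\eta'(u(x))\,\Delta^{N,+}_\infty\psi(x)\;\ge\;\eta''(u(x))\,F^2(\nabla\psi(x))+\eta'(u(x))\,f\big(u(x),\di\psi(x)\big),
\]
which is precisely the required inequality, read with the convention that $\nabla u,\di u$ are represented by the test-function derivatives $\nabla\psi,\di\psi$. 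The only delicate point is the local inversion argument: one must carefully use that $\eta'(u(x))>0$ (not merely on some neighbourhood in $M$) together with USC of $u$ to produce a bona fide smooth touching function $\psi$ from above, since $u$ itself has no gradient. Everything else is $C^2$ manipulation.
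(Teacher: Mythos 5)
Your argument is correct and supplies exactly the ``direct check'' that the paper asserts without writing out: the paper only records the $C^2$ chain rule \eqref{chain_rule} and states that the proposition follows from it, so there is no alternative proof to compare against. Your filling-in is the standard viscosity move of pulling a test function for $w$ back through the local $C^2$ diffeomorphism $\eta$, and each step is sound: the local inverse exists because $\eta'(u(x))>0$, $\psi=\eta^{-1}\circ\phi$ touches $u$ from above (your two-case argument using USC and continuity of $\psi$ is correct), and the transfer via the chain rule is legitimate in both the $\di\phi(x)\neq 0$ and $\di\phi(x)=0$ branches, since $\di\phi=\eta'(\psi)\,\di\psi$ with $\eta'(\psi(x))>0$ makes the two branches match and the $\max$ over $\{F(p)=1\}$ scales by the positive factor $\eta'(u(x))$.

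One small terminological point: what you are doing is not what this paper calls Calabi's trick. Calabi's trick (Lemma~\ref{lem_calabi}) is the device of replacing the distance function $\varrho^\pm$ by a smooth one-sided barrier $\varrho_\eps^\pm$, obtained by sliding the center slightly, to handle the cut locus; it has nothing to do with composing with a monotone $C^2$ function. What you use here is the classical change-of-test-function argument for viscosity solutions under a monotone $C^2$ reparametrization of the unknown. The proof itself is unaffected by this mislabeling, but the attribution is worth correcting so as not to conflate two different lemmas in the paper.
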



The following Lemma is a form of the classical Calabi's trick \cite{calabi} adapted to the Finsler setting. By slightly modifying the original argument, we are able to avoid the assumption that the underlying manifold be forward complete, a fact that will be important in what follows.

\begin{lemma}[Calabi's trick] \label{lem_calabi}
Let $(M,F)$ be a Finsler manifold, fix $\bar x \in M$ and define
	\[
	\varrho^+(y) = \di(\bar x,y), \qquad \varrho^-(y) = -\di(y, \bar x) \qquad \forall \, y \in M.
	\]
Let $x \in M\backslash \{\bar x\}$. Then, for every $\eps > 0$ small enough there exist functions $\varrho_\eps^+, \varrho_\eps^-$  satisfying the following properties:
	\begin{equation}\label{proprie_calabi}
	\left\{ \begin{array}{l}
	\varrho_\eps^+, \varrho_\eps^- \ \ \text{ are smooth in a neighbourhood $U_\eps$ of $x$,} \\[0.2cm]
	\varrho_\eps^+ \succ_x \varrho^+, \quad \varrho_\eps^- \prec_x \varrho^- \\[0.3cm]
	F(\nabla \varrho_\eps^\pm) = 1, \quad \Hess \varrho_\eps^\pm \left(\nabla \varrho_\eps^\pm,\nabla \varrho_\eps^\pm\right) = 0 \ \ \text{on} \ \ U_\eps.
	\end{array}
	\right.
	\end{equation}
In particular, for every $\eta \in C^2(\R)$, the functions $w_\eps^\pm = \eta( \varrho_\eps^\pm)$ satisfy 
	\begin{equation}\label{eq_whenrhosmooth1}
	F(\nabla w_\eps^\pm) = \eta'(\varrho_\eps^\pm), \quad \Delta_\infty^{N,\pm} w_\eps^\pm = \eta''(\varrho_\eps^\pm) \qquad \text{on} \quad U^* \doteq \{ x \in U_\eps : \eta'(\varrho_\eps^\pm) >0\}.
	\end{equation}
\end{lemma}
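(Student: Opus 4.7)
The plan is to adapt Calabi's classical argument of perturbing the base point. In a forward-complete setting one would pick $\bar x_\varepsilon$ on a minimizing geodesic from $\bar x$ to $x$; since here minimizers are not available, I will instead construct $\bar x_\varepsilon$ as a cluster point of subdivisions of an almost-minimizing sequence of curves, and exploit the local compactness of $M$ near $x$ together with the continuity of $d$ to recover the crucial triangle-inequality equality. I handle $\varrho^+$ in detail; the argument for $\varrho^-_\varepsilon$ is symmetric after reversing the orientation of the approximating curves.

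Set $r := \varrho^+(x) > 0$ and fix $\varepsilon \in (0, r/2)$. Choose piecewise smooth curves $\gamma_n$ joining $\bar x$ to $x$ with $L(\gamma_n) \downarrow r$, and for each $n$ pick a point $\bar x_n$ on $\gamma_n$ whose subarc from $\bar x_n$ to $x$ has length exactly $\varepsilon$. Then $d(\bar x_n, x) \le \varepsilon$ and $d(\bar x, \bar x_n) \le L(\gamma_n) - \varepsilon$, so all $\bar x_n$ eventually sit inside a small relatively compact coordinate neighborhood of $x$, and a subsequence converges to some $\bar x_\varepsilon$. Continuity of $d$ in the manifold topology gives $d(\bar x, \bar x_\varepsilon) \le r - \varepsilon$ and $d(\bar x_\varepsilon, x) \le \varepsilon$, whereas the triangle inequality forces $d(\bar x, \bar x_\varepsilon) + d(\bar x_\varepsilon, x) \ge r$, so the saturating identity
$$
d(\bar x, \bar x_\varepsilon) + d(\bar x_\varepsilon, x) = r
$$
must hold.

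For $\varepsilon$ sufficiently small the points $\bar x_\varepsilon$ vary in a fixed compact neighborhood of $x$, so by the regularity of the exponential map on small tangent vectors over a compact set (cf. Section \ref{sec_prelim}), $\exp_{\bar x_\varepsilon}$ is a $C^1$-diffeomorphism on a tangent ball of some uniform radius strictly exceeding $d(\bar x_\varepsilon, x) \le \varepsilon$. In particular the forward ball around $\bar x_\varepsilon$ containing $x$ is regular, Proposition \ref{prop_nabla_hess} applies, and $y \mapsto d(\bar x_\varepsilon, y)$ is smooth in a neighborhood $U_\varepsilon$ of $x$ with $F(\nabla d(\bar x_\varepsilon, \cdot)) \equiv 1$ and vanishing $\Hess(\nabla,\nabla)$. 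Defining
$$
\varrho^+_\varepsilon(y) := d(\bar x, \bar x_\varepsilon) + d(\bar x_\varepsilon, y),
$$
the triangle inequality yields $\varrho^+_\varepsilon \ge \varrho^+$ on $U_\varepsilon$ and the saturating identity yields $\varrho^+_\varepsilon(x) = \varrho^+(x)$, while the gradient and Hessian identities transfer unchanged. The symmetric construction for $\varrho^-_\varepsilon$ picks $\bar x^-_\varepsilon$ on an almost-minimizing curve from $x$ to $\bar x$, at forward distance $\varepsilon$ from $x$, and sets $\varrho^-_\varepsilon(y) := -d(y, \bar x^-_\varepsilon) - d(\bar x^-_\varepsilon, \bar x)$; regularity now requires the backward ball around $\bar x^-_\varepsilon$ to be regular, which follows by applying the same exponential-regularity statement to the reverse Finsler structure $\widetilde F(p) = F(-p)$.

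The identities for $w^\pm_\varepsilon = \eta(\varrho^\pm_\varepsilon)$ in \eqref{eq_whenrhosmooth1} follow by substituting $\phi = \varrho^\pm_\varepsilon$ into the chain rule \eqref{chain_rule}: the positive homogeneity of $F$ gives $F(\nabla w^\pm_\varepsilon) = \eta'(\varrho^\pm_\varepsilon) F(\nabla \varrho^\pm_\varepsilon) = \eta'(\varrho^\pm_\varepsilon)$ on $U^*$, the term $\eta'(\varrho^\pm_\varepsilon)\Delta_\infty^{N,\pm}\varrho^\pm_\varepsilon$ vanishes by the Hessian identity, and the term $\eta''(\varrho^\pm_\varepsilon) F^2(\nabla \varrho^\pm_\varepsilon)$ reduces to $\eta''(\varrho^\pm_\varepsilon)$. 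I expect the genuinely delicate step to be the limit passage producing $\bar x_\varepsilon$: without forward completeness one cannot appeal to a minimizing geodesic, and the desired triangle-saturating identity has to be extracted from the joint use of continuity of $d$, local compactness near $x$, and the lower bound supplied by the triangle inequality itself.
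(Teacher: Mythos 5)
Your proof is correct, and the core mechanism is the same as the paper's: fix a small $\varepsilon$, produce a point $x_\varepsilon$ at backward distance $\varepsilon$ from $x$ at which the triangle inequality with $\bar x$ and $x$ is saturated, and then invoke the local regularity of small Finsler balls (Theorem 2.7/Proposition \ref{prop_nabla_hess}) to replace $\varrho^\pm$ by the smooth translate based at $x_\varepsilon$. Where you differ is in the construction of $x_\varepsilon$ and in how the saturation is established. The paper defines $x_\varepsilon$ as the minimizer of $\varrho^+$ on the compact backward sphere $\mathcal{S}_x^-(\varepsilon)$, and then proves $\varrho^+_\varepsilon(x) = \varrho^+(x)$ by contradiction: an almost-minimizing curve $\gamma_j$ from $\bar x$ to $x$ must cross $\mathcal{S}_x^-(\varepsilon)$, and splitting $L(\gamma_j)$ at the crossing point, together with the minimality of $x_\varepsilon$, yields a contradiction for large $j$. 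You instead take arclength-$\varepsilon$ points $\bar x_n$ on almost-minimizing curves, extract a cluster point $\bar x_\varepsilon$ using the relative compactness of $\overline{\mathcal{B}_x^-(\varepsilon)}$ for small $\varepsilon$, and then the saturation falls out immediately from the squeeze $r \le \di(\bar x,\bar x_\varepsilon) + \di(\bar x_\varepsilon,x) \le (r-\varepsilon) + \varepsilon$. Your version dispenses with the separate contradiction argument and makes the role of local compactness more transparent, at the price of a subsequential-limit extraction; the paper's version is slightly shorter to write. Both are sound and both correctly identify that the absence of a minimizing geodesic (no forward completeness assumed) is the point that must be circumvented. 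Your handling of $\varrho^-_\varepsilon$, the appeal to the reverse structure $\widetilde F$ for backward-ball regularity, and the chain-rule computation for $w_\varepsilon^\pm$ all match the paper. One small bookkeeping detail worth making explicit: the uniform regularity radius $\delta$ coming from Theorem 2.7 applied to the compact set $\overline{\mathcal{B}_x^-(\varepsilon_0)}$ must be fixed before choosing $\varepsilon < \min(\varepsilon_0, \delta)$, so that $\di(\bar x_\varepsilon, x) = \varepsilon$ is below the injectivity radius at $\bar x_\varepsilon$; your phrase "for $\varepsilon$ sufficiently small" is doing this work, but spelling out the order of quantifiers would tighten the exposition.
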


\begin{proof}
We first prove the statement for $\varrho^+$. Fix a small $\eps>0$ in such a way that 
\begin{itemize}
\item[(i)] the backward geodesic ball $\mathcal{B}_x^-(2\eps)$ is relatively compact. 
\item[(ii)] for every $y \in \mathcal{B}_x^-(2\eps)$, $\exp_y : B_{y}^+(2\eps)\subset T_yM \ra \mathcal{B}_y^+(2\eps)$ is a diffeomorphism.
\end{itemize}
Choose $x_\eps \in \mathcal{S}_x^-(\eps)$ to be the minimum point of $\varrho^+$ restricted to $\mathcal{S}_x^-(\eps)$, and define
	\[
	\varrho_\eps^+(y) \doteq \di(\bar x,x_\eps) + \di(x_\eps, y) \qquad \forall \, y \in M.
	\]
By the triangle inequality, $\varrho_\eps^+ \ge \varrho^+$ on $M$. We claim that equality holds at $y=x$. Indeed, assume by contradiction that $\varrho_\eps^+(x) = \varrho_+(x) + c_\eps$ for some $c_\eps>0$. Let $\{\gamma_j\}$ be a sequence of unit speed curves from $\bar x$ to $x$ with $L(\gamma_j) \le \varrho^+(x) + j^{-1}$ and, for every $j$, define 
	\[
	t_j = \inf \Big\{ t \in [0, L(\gamma_j)] \ : \ \gamma_j\big((t_j,L(\gamma_j)]\big) \subset \mathcal{B}^-_x(\eps) \Big\}. 
	\]
	Note that $x_j= \gamma(t_j) \in \mathcal{S}^-_x(\eps)$.	Then, 
	\[
	\begin{array}{lcl}
	\di(\bar x, x) + \frac{1}{j} & \ge & \disp L(\gamma_j) = L\Big((\gamma_j)_{[0,t_j]}\Big) + L\Big((\gamma_j)_{[t_j,L(\gamma_j)]}\Big) \\[0.3cm]
	& \ge & \di(\bar x, x_\eps) + \di(x_j, x) = \di(\bar x, x_\eps) + \di(x_\eps, x) > \di(\bar x,x) + c_\eps,
	\end{array}
	\]
a contradiction if $j$ is chosen to be large enough.

Having shown that $\varrho_\eps^+$ touches $\varrho^+$ from above at $x$, by (ii) we deduce that $\varrho_\eps^+$ is smooth on $\mathcal{U}_\eps \doteq \mathcal{B}_{x_\eps}^+(2\eps)\backslash \{x_\eps\}$, that is a neighbourhood of $x$. Moreover, by Proposition \ref{prop_nabla_hess}
	\begin{equation*}\label{ipo_rhoeps}
	F(\nabla \varrho_\eps^+) = 1, \qquad \Hess \varrho_\eps^+\left(\nabla \varrho_\eps^+,\nabla \varrho_\eps^+\right) = 0 \qquad \text{on } \, \mathcal{U}_\eps,
	\end{equation*}
as required. The argument is analogous for the signed distance $\varrho^-$: we choose $\eps$ small enough to match
\begin{itemize}
\item[(i)] the forward geodesic ball $\mathcal{B}_x^+(2\eps)$ is relatively compact. 
\item[(ii)] for every $y \in \mathcal{B}_x^+(2\eps)$, $\exp_y : B_{y}^-(2\eps)\subset T_yM \ra \mathcal{B}_y^-(2\eps)$ is a diffeomorphism.
\end{itemize}
Choose then $x_\eps \in \mathcal{S}_x^+(\eps)$ minimizing $-\varrho^- = \di(\cdot, \bar x)$ on $\mathcal{S}_x^+(\eps)$ and define $\varrho_\eps^-$ according to the identity
	\[
	-\varrho_\eps^-(y) : = \di(y,x_\eps) + \di(x_\eps,\bar x) \ge -\varrho^-(y) \qquad \forall \, y \in M. 
	\]
With the same argument as above, we can show that $\varrho_\eps^- \prec_x \varrho^-$, and the third condition in \eqref{proprie_calabi} follows from Proposition \eqref{prop_nabla_hess} as well. To conclude, on $U^*$ it holds $F(\nabla w_\eps^\pm) = \eta'(\varrho_\eps^\pm) F(\nabla \varrho_\eps^\pm) = \eta'(\varrho_\eps^\pm)$, while from equation \eqref{chain_rule},
\begin{eqnarray*}
\Delta_{\infty}^{N,\pm}
 w^{\pm}_\eps &=& \eta''(\varrho^\pm_\eps)F^2(\nabla \varrho^\pm_\eps) + \eta'(\varrho^\pm_\eps)\Delta^{N,\pm}_\infty \varrho^\pm_\eps = \eta''(\varrho^\pm_\eps).
\end{eqnarray*}
\end{proof}

\begin{corollary}\label{cor_radial_inf}
Let $(M,F)$ be a Finsler manifold, and $\eta \in C^2(\R)$. Fix $\bar x \in M$ and consider the signed distance functions 
	\[
	\varrho^+(\cdot) = \di(\bar x, \cdot), \qquad \varrho^-(\cdot) = -\di(\cdot,\bar x). 
	\]
Then, $v := \eta(\varrho^{+})$ is a viscosity supersolution of $F(\nabla v) - \eta'(\varrho^+) = 0$ on $\big\{\eta'(\varrho^+)>0 \big\} \backslash \{\bar x\}$ (that is, $F(\nabla \phi) - \eta'(\varrho^+) \ge 0$ holds at $x$ whenever $\phi \prec_x v$), and there it satisfies 
	\[
	\Delta^N_\infty v \le \eta''(\varrho^+) 
	\]
in the barrier sense. Similarly, the function $u: = \eta(\varrho^-)$ is a viscosity subsolution of $F(\nabla u) - \eta'(\varrho^+) = 0$, and it satisfies
	\[
	\Delta^N_\infty u \ge \eta''(\varrho^-)
	\]
in the barrier sense on $\big\{ \eta'(\varrho^-)>0 \big\} \backslash \{\bar x\}$. 
\end{corollary}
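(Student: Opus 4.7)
The plan is to reduce everything to Calabi's trick (Lemma \ref{lem_calabi}) together with the chain rule \eqref{eq_whenrhosmooth1}. Fix $x \in \{\eta'(\varrho^+) > 0\} \setminus \{\bar x\}$; by continuity, $\eta$ is strictly increasing on the range of $\varrho^+$ on a neighborhood $V$ of $x$. Lemma \ref{lem_calabi} delivers, for $\eps$ small, a smooth $\varrho_\eps^+$ on a neighborhood $U_\eps \subset V$ with $\varrho_\eps^+ \succ_x \varrho^+$ and the Finsler-radial identities $F(\nabla \varrho_\eps^+) = 1$ and $\Hess \varrho_\eps^+(\nabla \varrho_\eps^+, \nabla \varrho_\eps^+) = 0$. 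Setting $w_\eps := \eta(\varrho_\eps^+)$, monotonicity of $\eta$ upgrades $\varrho_\eps^+ \succ_x \varrho^+$ to $w_\eps \succ_x v$, while \eqref{eq_whenrhosmooth1} provides $F(\nabla w_\eps) = \eta'(\varrho_\eps^+)$ and $\Delta_\infty^{N,\pm} w_\eps = \eta''(\varrho_\eps^+)$ on $U_\eps$ (both operator branches coincide there since $\nabla w_\eps \ne 0$).

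For the supersolution property, I would pick any $C^1$ test $\phi \prec_x v$. Then $\phi \le v \le w_\eps$ locally with $\phi(x) = w_\eps(x)$, so $w_\eps - \phi$ attains a local minimum at $x$, forcing $\nabla \phi(x) = \nabla w_\eps(x)$. Evaluating at $x$ yields
\[
F(\nabla \phi(x)) = F(\nabla w_\eps(x)) = \eta'(\varrho_\eps^+(x)) = \eta'(\varrho^+(x)),
\]
which is precisely $F(\nabla \phi) - \eta'(\varrho^+) \ge 0$ (in fact with equality). For the barrier statement $\Delta_\infty^N v \le \eta''(\varrho^+)$, the same $w_\eps$ serves as the required $C^2$ upper barrier: $w_\eps \succ_x v$ and $\Delta_\infty^{N,-} w_\eps(x) = \eta''(\varrho_\eps^+(x)) = \eta''(\varrho^+(x))$, so the inequality holds with no $\eps$-slack.

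The case $u = \eta(\varrho^-)$ is symmetric. Lemma \ref{lem_calabi} produces $\varrho_\eps^- \prec_x \varrho^-$ smooth in a neighborhood of $x$, and monotonicity of $\eta$ (on the set $\{\eta'(\varrho^-)>0\}$) promotes this to $w_\eps^- := \eta(\varrho_\eps^-) \prec_x u$. Any $C^1$ $\phi \succ_x u$ then gives $\phi - w_\eps^- \ge 0$ with equality at $x$, hence $\nabla \phi(x) = \nabla w_\eps^-(x)$ and $F(\nabla \phi(x)) = \eta'(\varrho^-(x))$, proving the subsolution property (the $\eta'(\varrho^+)$ appearing in the statement should read $\eta'(\varrho^-)$). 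The barrier identity $\Delta_\infty^{N,+} w_\eps^-(x) = \eta''(\varrho^-(x))$ then closes the case.

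The only real subtlety is sign bookkeeping: the barrier definition uses $\Delta_\infty^{N,-}$ on the supersolution side and $\Delta_\infty^{N,+}$ on the subsolution side, which is precisely what is needed to match Calabi's one-sided smooth approximants $\varrho_\eps^\pm$ (where the superscript refers to the touching direction for $\varrho^\pm$, not to the operator branch). Once that alignment is set, no analysis beyond Lemma \ref{lem_calabi} is required — Calabi's trick has already done the real work by providing smooth local barriers whose Hessian in the gradient direction vanishes.
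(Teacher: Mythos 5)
Your proof is correct and follows the paper's argument essentially verbatim: both proceed by invoking Calabi's trick (Lemma~\ref{lem_calabi}) to produce the smooth one-sided barrier $\varrho_\eps^\pm$ touching $\varrho^\pm$ at $x$, upgrade to $\eta(\varrho_\eps^\pm)$ by local monotonicity of $\eta$, then read off the eikonal and Hessian identities from \eqref{eq_whenrhosmooth1} and pin down $\nabla\phi(x)$ via the squeeze between test function and barrier. Your remark that the second eikonal equation in the statement should read $\eta'(\varrho^-)$ rather than $\eta'(\varrho^+)$ is a correct observation of a typo.
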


\begin{proof}
We will just prove it for $\varrho^+$. Let $\varrho_\eps^+$ be defined as in Lemma \ref{lem_calabi} and smooth in a neighbourhood $U_\eps$. Up to reducing $\eps$, we can further assume that $\eta'(t)>0$ for every $t \in [\varrho^+(y),\varrho^+_\eps(y)]$ and $y \in U_\eps$. Therefore, $v_\eps \doteq  \eta(\varrho_\eps^+) \succ_x v$ and 
	\[
	F(\nabla v_\eps) = \eta'(\varrho_\eps^+) = \eta'(\varrho^+), \qquad \Delta_\infty^{N,-} v_\eps = \eta''(\varrho^+_\eps) = \eta''(\varrho^+) \qquad \text{at } \, x.
	\]
If $\phi \prec_x v$, then $\nabla \phi(x) = \nabla v_\eps(x)$ and thus $F(\nabla \phi) - \eta'(\varrho^+) = 0$ at $x$.
		
\end{proof}

\section{Comparison with $g$-cones and Lipschitz regularity} \label{sec_compacones}

In this section, we will consider bounded sub-and supersolutions of the equation 
	\[
	\Delta^N_\infty u = g(u) \qquad \text{on } \, \Omega \Subset M,
	\]
where $g$ is a function whose restriction to $[u_*,u^*]$ is non-decreasing and continuous, and $u_* = \inf_\Omega u, \ u^* = \sup_\Omega u$.
%
 	
 For given $b \ge 0$, consider a solution $\eta_{b}$ of 
	\begin{equation}\label{def_eta}
	\left\{ \begin{array}{ll}
	\eta_b''(t) = g(\eta_b(t)) \qquad \text{on a maximal interval } \, [0,T), \\[0.2cm]
	\eta_b(0) = u_*, \quad \eta_b'(0) = b.
	\end{array}\right.
	\end{equation}
Multiplying the equation by $2\eta'$ and integrating we deduce
	\begin{equation}\label{eq_etaprimo}
	[\eta_{b}'(t)]^2 - b^2 = G\big(\eta_{b}(t)\big), \qquad \text{where } \, G(s) = 2\int_{u_*}^s g(\sigma) \di \sigma.
	\end{equation}
If 
\begin{eqnarray}\label{b_cond_G}
b> \sqrt{\max\{-G_*,0\}},
\end{eqnarray}
where $G_* \doteq \inf_{[u_*,u^*]} G$, then $\eta'_b>0$ and a second integration shows that $\eta_{b}$ is implicitly defined by the identity
	\begin{equation}\label{eq_implicit}
	t = \int_{u_*}^{\eta_{b}(t)} \frac{\di s}{\sqrt{b^2 + G(s)}} \qquad \text{on } \, [0,T).
	\end{equation}
In particular, note that the family $\{\eta_b\}$ is increasing in $b$, whenever it is valued on $[u_*,u^*]$.

Given $a \in [u_*,u^*]$ we define
	\[
	R_b(a) \doteq \inf \Big\{ t \in [0,T) \ : \ \eta_{b}(t) \ge a\Big\}. 
	\] 
This constant encompasses the non translational invariance character of the inhomogeneous equation, and it helps us to deduce ``how far'' the $g$-cones can be defined. In view of \eqref{eq_etaprimo}, for any values $u_* \leq a_1< a_2 \leq u^*$ we have 
	\begin{equation}\label{grad_esti}
	\|\eta'_{b}\|_{L^\infty(R_b(a_1),R_{b}(a_2))} \le \sqrt{ b^2 + 2\int_{a_1}^{a_2} g_+} \le \sqrt{ b^2 + 2\int_{u_*}^{u^*} g_+}.
	\end{equation}

\begin{remark}\emph{
We recall that when the function $g$ is constant, let us say $g \equiv c$ for some $c \in \R$, the solutions of \eqref{def_eta} are the quadratic functions $\eta_b(t) = u_* + bt + \frac{c}{2}t^2$ considered in \cite{peres_schramm_sheffield_wilson,lu_wang_cpde,armstrong_smart_tams,mebrate_mohammed}.
}
\end{remark}

\begin{remark}\label{rmk_b=0}\emph{
If $g\geq 0$ on $[u_*,u^*]$, we will also consider the limit case of \eqref{eq_implicit} for $b=0$. Under the validity of the Keller-Osserman condition 
	\begin{equation}\label{eq_KO}\tag{KO}
	\int_{u_*^+} \frac{\di s}{\sqrt{G(s)}} < \infty,
	\end{equation}
uniqueness for \eqref{def_eta} does not hold, and we select $\eta_{0}$ as being the one defined by the limit identity
	\[
	t = \int_{u_*}^{\eta_{0}(t)} \frac{\di s}{\sqrt{G(s)}} \qquad \text{on } \, [0,T).
	\]
If \eqref{eq_KO} fails, necessarily $g(u_*) = 0$ and the only solution of \eqref{def_eta} with $b=0$ is the function $\eta_{0} \equiv u_*$. In this case, we set $R_0(a) \doteq  + \infty$ for every $a \in (u_*,u^*]$.
}
\end{remark}

For $z \in M$ fixed, we define the \emph{forward and backward $g$-cones} centered at $z$ as being, respectively, 
	\[
	\begin{array}{lcll}
	C^+_{z,b}(w) & = & \eta_{b}\big(\di(z,w) + R_b(u(z)) \big) & \quad \text{on } \, \mathcal{B}_{z}^+\big(R_b(u^*)- R_b(u(z))\big), \\[0.3cm]
	C^-_{z,b}(w) & = & \eta_b \big( R_b(u(z)) - \di(w,z) \big) & \quad \text{on } \, \mathcal{B}_{z}^-\big(R_b(u(z))\big).
	\end{array}
	\]
	
\begin{example}
\emph{For instance, if $g = 0$, 
	\[
	C^+_{z,b}(w) = u(z) + b \di(z,w), \qquad C^-_{z,b}(w) = u(z) - b \di(w,z) 
	\]
are the standard forward and backward cones. If $g \equiv c \not=0$, then 
	\[
	\begin{array}{lcll}
C^+_{z,b}(w) & = & u(z) + \big(b + cR_b(u(z))\big)\di(z,w) + \frac{c}{2}\di(z,w)^2 & \quad \text{on } \, \mathcal{B}_{z}^+\big(R_b(u^*)- R_b(u(z))\big), \\[0.3cm]
	C^-_{z,b}(w) & = & u(z) - \big(b + cR_b(u(z))\big)\di(w,z) + \frac{c}{2}\di(w,z)^2 & \quad \text{on } \, \mathcal{B}_{z}^-\big(R_b(u(z))\big).
	\end{array}
	\]
}
\end{example}
	
Since $\eta_b'>0$ on $(0,R_b(u^*))$, because of Corollary \ref{cor_radial_inf}, $C^+_{z,b}$ and $C^-_{z,b}$ satisfy, respectively,
	\[
	\left\{\begin{array}{ll}
	\disp \Delta_\infty C^+_{z,b} \le g(C^+_{z,b}) & \quad \text{on } \, \mathcal{B}_{z}^+\big(R_b(u^*)- R_b(u(z))\big)\backslash \{z\}, \\[0.2cm]
	C^+_{z,b}(z) = u(z), \\[0.2cm]
	C^+_{z,b} = u^* & \quad \text{on } \, \mathcal{S}_{z}^+\big(R_b(u^*)- R_b(u(z))\big),
	\end{array} \right.
	\]	
and
	\[
	\left\{\begin{array}{ll}
	\disp \Delta_\infty C^-_{z,b} \ge g(C^-_{z,b}) & \quad \text{on } \, \mathcal{B}_{z}^-\big(R_b(u(z))\big)\backslash \{z\}, \\[0.2cm]
	C^-_{z,b}(z) = u(z), \\[0.2cm]
	C^-_{z,b} = u_* & \quad \text{on } \, \mathcal{S}_{z}^-\big(R_b(u(z))\big). 
	\end{array} \right.
	\]		
Extend $C^+_{z,b}$ and $C^-_{z,b}$ outside of the respective domains by setting them equal to, respectively, $u^*$ and $u_*$, and call the resulting extensions $\bar C^+_{z,b}$ and $\bar C^-_{z,b}$. Note that the extensions are Lipschitz continuous on the entire $M$, and in view of \eqref{grad_esti} they satisfy 
	\begin{equation}\label{eq_Lipcone}
	\lip( \bar C^+_{z,b}, M) \le \sqrt{ b^2 + 2\int_{u_*}^{u^*} g_+(s)\di s }, \qquad \lip( \bar C^-_{z,b}, M) \le \sqrt{ b^2 + 2\int_{u_*}^{u^*} g_+(s)\di s }.
	\end{equation}

Our next result extends the celebrated comparison with cones theorem (cf. \cite{crandall_evans_gariepy,champion_depascale,lu_wang_cpde,mebrate_mohammed} and references therein) for $g$-cones. 

\begin{theorem}\label{comp_g-cones}
Let $\Omega \subset M$ be a bounded open set.
\begin{enumerate}
\item[i)] Suppose that $u \in \USC(\overline{\Omega}) \cap L^\infty(\Omega)$ satisfies
\begin{equation}\label{hyp_thm_cone_comp_1} 
\Delta^N_{\infty} u \ge g(u) \quad \text{in} \ \ \Omega, 
\end{equation}
and assume
\begin{equation*}\label{g_comp_cone_+}
g \in C(u(\overline{\Omega})) \ \text{be non-decreasing, and $b$ satisfy \eqref{b_cond_G}}.
\end{equation*}
Then, for any relatively compact, open set $K \subset \Omega$, and any forward $g$-cone $\bar C^+_{z,b}$ centered at $z \in \Omega\backslash K$, we have 
	\[ 
	u \le \bar C^+_{z,b} \qquad \text{on } \, \partial K \qquad \Longrightarrow \qquad u \le \bar C^+_{z,b} \qquad \text{on } \, \overline{K}.
	\]
\item[ii)] Suppose that $v \in \LSC(\overline{\Omega})\cap L^\infty(\Omega)$ satisfies
\begin{equation}\label{hyp_thm_cone_comp_2}
\Delta^N_{\infty} v \leq g(v) \quad \text{in} \ \ \Omega,
\end{equation}
and assume
\begin{equation*}\label{g_comp_cone_-}
g \in C(v(\overline{\Omega})) \ \text{be non-decreasing, and $b$ satisfy \eqref{b_cond_G}}.
\end{equation*}
Then, for any relatively compact, open set $K \subset \Omega$ and any backward $g$-cone $\bar C^-_{z,b}$ centered at $z \in \Omega\backslash K$, we have
	\[ 
	v \ge \bar C^-_{z,b} \qquad \text{on } \, \partial K \qquad \Longrightarrow \qquad v \ge \bar C^-_{z,b} \qquad \text{on } \, \overline{K}.
	\]
\end{enumerate}
\end{theorem}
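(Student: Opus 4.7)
I treat part (i); part (ii) follows by the symmetric argument upon passage to the dual Finsler structure $\widetilde F$. The plan has two main ingredients: recognize $\bar C^+_{z,b}$ as a viscosity supersolution of $\Delta_\infty^N w = g(w)$ on the smooth part of its domain, and then invoke a viscosity comparison principle between sub- and supersolutions of that equation.

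By assumption \eqref{b_cond_G} and the energy identity \eqref{eq_etaprimo}, $\eta_b' > 0$ throughout $[0, R_b(u^*)]$, so the shifted profile $\eta(t) := \eta_b(t + R_b(u(z)))$ is smooth and strictly increasing on $[0, R_b(u^*) - R_b(u(z))]$, with $\eta'' = g(\eta)$. Writing $C^+_{z,b}(w) = \eta(\varrho^+(w))$ with $\varrho^+(\cdot) := \di(z,\cdot)$, Corollary \ref{cor_radial_inf} provides
$$\Delta_\infty^N C^+_{z,b} \le \eta''(\varrho^+) = g(C^+_{z,b})$$
in the barrier sense on $\mathcal B \setminus \{z\}$, where $\mathcal B := \mathcal B^+_z(R_b(u^*) - R_b(u(z)))$. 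Outside $\mathcal B$, $\bar C^+_{z,b} \equiv u^* \ge u$, so the inequality is automatic there; the problem thus reduces to $K' := K \cap \mathcal B$, with each point of $\partial K'$ lying either in $\partial K$ (where the hypothesis gives $u \le \bar C^+_{z,b}$) or in the sphere $\mathcal S^+_z(R_b(u^*) - R_b(u(z)))$ (where $\bar C^+_{z,b} = u^* \ge u$). If $z \in \overline{K'}$ one further excises $\overline{\mathcal B^+_z(\delta)}$ and sends $\delta \to 0^+$ at the end, using $\bar C^+_{z,b}(z) = u(z)$ together with $u \in \USC(\overline \Omega)$.

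It then remains to compare a USC subsolution $u$ with a continuous supersolution $\bar C^+_{z,b}$ of $\Delta_\infty^N w = g(w)$, with $g$ continuous and non-decreasing, on $K' \setminus \overline{\mathcal B^+_z(\delta)}$. The standard Crandall-Ishii-Lions technique of doubling of variables applies: for $\alpha \gg 1$ one studies the maxima $(x_\alpha, y_\alpha)$ of
$$\Phi_\alpha(x,y) := u(x) - \bar C^+_{z,b}(y) - \tfrac{\alpha}{2}\,\sigma(x,y)^2,$$
where $\sigma$ is a smooth local penalty equivalent to the Finsler distance near the limit contact point $\bar x = \lim(x_\alpha, y_\alpha)$; Ishii's lemma supplies matching sub- and super-jets, and the monotonicity of $g$ absorbs the zero-order contributions as $\alpha \to +\infty$. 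Analogous comparison principles for $\Delta_\infty^N$ under Finsler norms on $\R^m$ are available in \cite{mebrate_mohammed, mebrate_mohammed_2, lu_wang_cpde}, and their transfer to a Finsler manifold is achieved by working in exponential coordinates around $\bar x$.

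The principal technical obstacle is this viscosity comparison step, owing to the asymmetry of $\di$ and the discontinuity of $\Delta_\infty^N$ at zero gradient. The asymmetry forces a careful choice of penalty (using one of $\di(x,y)$, $\di(y,x)$, or a symmetrization, according to which variable carries the sub- versus super-information); the zero-gradient singularity is handled by verifying that at the maximizing pair the penalty gradient remains bounded away from zero, so that only the non-singular branch of $\Delta^{N,\pm}_\infty$ is tested. When $g$ fails to be strictly increasing, the direct maximum principle at a contact point yields $g(u(\bar x)) = g(\bar C^+_{z,b}(\bar x))$ rather than a contradiction; this degeneracy is absorbed either by an additional $\eps$-penalization in $\Phi_\alpha$ or via a strict perturbation $b \mapsto b + \eps$ of the cone (pointwise strictly larger by \eqref{eq_implicit}), followed by $\eps \to 0^+$ using the continuous dependence of $\eta_b$ on $b$.
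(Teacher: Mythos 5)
Your proposal takes a genuinely different route from the paper, and the difference matters in a way that undermines the proposal's completeness.

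The paper proves Theorem~\ref{comp_g-cones} \emph{directly} by a one-step contradiction. Assume $\gamma := \max_{\overline K}(u - \bar C^+_{z,b}) > 0$, replace the profile $\eta_b(\,\cdot + R_b(u(z)))$ by the slightly bent profile $\phi_\eps(t) = \eta_b(t + R_b(u(z))) - \tfrac{\eps}{2}t^2$ so that $\phi_\eps'' = g(\phi_\eps + \tfrac{\eps}{2}t^2) - \eps$ is a \emph{strict} radial supersolution, pick the interior point $x_0$ where $u - \phi_\eps(\varrho^+)$ is maximal, and then use Calabi's trick (Lemma~\ref{lem_calabi}) to produce a smooth $\varrho^+_\eps \succ_{x_0} \varrho^+$ with $F(\nabla\varrho^+_\eps)=1$ and vanishing radial Hessian. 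Then $\gamma_\eps + \phi_\eps(\varrho^+_\eps)$ is a legitimate $C^2$ test function touching $u$ from above at $x_0$, and testing the subsolution inequality there, combined with the non-decreasing monotonicity of $g$, gives an immediate contradiction. No comparison theorem for $\Delta^N_\infty u = g(u)$ is ever invoked; the whole proof is a pointwise viscosity test.

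By contrast, you cast $\bar C^+_{z,b}$ as a supersolution (via Corollary~\ref{cor_radial_inf} — this part is correct) and then appeal to a general sub/supersolution comparison principle for $\Delta^N_\infty u = g(u)$ on the Finsler manifold, to be proved by Crandall--Ishii--Lions doubling of variables. This is where the gap is. No such comparison theorem is available in the paper: the only comparison result stated there, Theorem~\ref{compa_inftyLaplacian} in Appendix~I, is for the \emph{homogeneous} equation, and its proof actually \emph{uses} Corollary~\ref{thm_cone_comp}, which is a corollary of Theorem~\ref{comp_g-cones}. So invoking that theorem here would be circular. Proving the inhomogeneous comparison by doubling, independently of the cone comparison, is a genuine project in its own right: one has to deal simultaneously with the singularity of $\Delta^N_\infty$ at zero gradient, the degenerate ellipticity (so Ishii's lemma alone does not close the argument), the absence of strict monotonicity of $g$, and — new to the Finsler manifold setting — the asymmetry and non-smoothness of $\di$. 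You do flag all of these, but they are not incidental technicalities to be dispatched in a sentence; in the Euclidean Finsler-norm setting they already occupy substantial portions of \cite{lu_wang_cpde, mebrate_mohammed_2}, and the passage from a Finsler norm on $\R^m$ to a Finsler manifold is not automatic ``via exponential coordinates'': the radius of injectivity, the failure of $\di(\exp_{\bar x}v, \exp_{\bar x}w)$ to coincide with any flat norm, and the behavior of the penalty $\sigma(x,y)$ near the cut locus all have to be controlled. In effect, your route proves a strictly stronger and harder result (full inhomogeneous comparison) in order to deduce a weaker one (comparison against a specific radial supersolution). The paper's insight is precisely that for \emph{radial} supersolutions one can bypass the general theory entirely: the Calabi trick upgrades $\varrho^+$ to a genuine $C^2$ barrier at the one contact point that matters, which is all the definition of viscosity subsolution requires. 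If you want to retain the spirit of your two-step outline, the fix is to replace the appeal to a general comparison principle by the Calabi-trick touching argument at the interior maximum of $u - C^+_{z,b}$, with the $\eps$-bending of $\eta_b$ supplying the needed strictness.
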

\begin{proof}
The argument follows the standard comparison strategy. For i), we argue by contradiction and assume that $\gamma : = \max_{\overline{K}}(u- \bar C^+_{z,b}) > 0$. For $\eps >0$ small enough we define 
	\[
	\phi_\eps(t) = \eta_b(t + R_b(u(z))) - \frac{\eps}{2}t^2,
	\]	
and set $\varrho^+(x) = \di(z,x)$. Up to reducing $\eps$, we can assume that 
	\begin{equation}\label{eq_eps}
	\begin{array}{l}
	\gamma_\eps \doteq \max_{\overline{K}}(u-\phi_\eps(\varrho^+)) > \max \left\{ \frac{\gamma}{2}, \max_{\partial K}(u-\phi_\eps(\varrho^+))\right\}, \\[0.3cm]
	\phi_\eps' > 0 \quad \text{on } \, [0, R_b(u^*)],
	\end{array}
	\end{equation}
where the second line follows from the strict inequality in \eqref{b_cond_G}. Let $x_0 \in \mathrm{Int}(K)$ realize $\gamma_\eps$, and note that $\phi_\eps(\varrho^+) < u^*$ in a sufficiently small neighbourhood of $x_0$. Choose $\varrho_\eps^+ \succ_{x_0} \varrho^+$ as in Lemma \ref{lem_calabi}, and reduce $\eps$ to satisfy $\eps(\rho^+_\eps)^2 < \gamma$. By construction, $\gamma_\eps+ \phi_\eps(\varrho^+_\eps) \succ_{x_0} u$ and therefore, at the point $x_0$,
	\[
	g\big( \frac{\gamma}{2} + \phi_\eps(\varrho^+_\eps)\big) \le g\big( \gamma_\eps+ \phi_\eps(\varrho^+_\eps)\big) \le \Delta_\infty^{N,-} \big(\gamma_\eps+ \phi_\eps(\varrho^+_\eps)\big)
	\]
On the other hand, by Lemma \ref{lem_calabi}
	\[
	\Delta_\infty^{N,-} \big(\gamma_\eps+ \phi_\eps(\varrho^+_\eps)\big) = \phi_\eps''(\varrho_\eps^+) = g\big( \phi_\eps(\varrho_\eps^+) + \frac{\eps}{2} (\varrho_\eps^+)^2 \big) - \eps < g\big( \phi_\eps(\varrho_\eps^+) + \frac{\gamma}{2} \big),  
	\]
yielding to a contradiction. Case $ii)$ follows similarly.
\end{proof}

When $g$ is constant, with the same argument we deduce the following comparison with quadratic cones, well-known in the Riemannian setting (cf. \cite{lu_wang,mebrate_mohammed}), and a related local Lipschitz regularity result. For $z \in \Omega$ we set 
$$ \di^+(z) \doteq \sup \big\lbrace r>0 : \mathcal{B}^+_z(r) \Subset \Omega \big\rbrace, \qquad \di^-(z) \doteq \sup \big\lbrace r>0 : \mathcal{B}^-_z(r) \Subset \Omega \big\rbrace,$$
and
	\[
	\delta^+_\Omega (z) \doteq \max\big\{\di (z, w) : w \in \overline{\Omega}\big\}, \qquad \delta^-_\Omega (z) \doteq \max\big\{\di(w,z) : z \in \overline{\Omega}\big\}.
	\]
	
\begin{corollary}\label{thm_cone_comp}
Let $\Omega \subset M$ be a bounded open set, and let $c \in \mathbb{R}$.  
\begin{enumerate}
\item[i)] Suppose $u \in \USC(\Omega)\cap L^\infty(\Omega)$ solves
\begin{equation*} 
\Delta^N_{\infty} u \ge c \quad \text{in} \ \ \Omega. 
\end{equation*}
Then, for any relatively compact, open set $K \subset \Omega$, and any forward quadratic cone $C^+_{z,b}$ centered at $z \in \Omega\backslash K$, and $b + cR_b(u(z)) \geq c_- \delta^+_K(z)$, we have 
	\[ 
	\max_{\overline{K}}\left(u-C^+_{z,b}\right) = \max_{\partial K}\left(u-C^+_{z,b}\right).
	\]
Moreover, for every $r \in (0,\di^+(z))$ and every $w \in \mathcal{B}_z^+(r)$ it holds
\begin{equation}\label{ineq_lip_u}	
	\frac{u(w)-u(z)}{\di(z,w)} \le \max\left\{ c_- r, \frac{c_-}{2}r + \sup_{\xi \in \mathcal{S}_z^+(r)} \frac{u(\xi)-u(z)}{r} \right\} + \frac{c_-}{2} \di(z,w).
\end{equation}
\item[ii)] Suppose $v \in \LSC(\Omega)$ satisfies
\begin{equation*}
\Delta^N_{\infty} v \leq c \quad \text{in} \ \ \Omega.
\end{equation*}
For any relatively compact, open set $K \subset \Omega$ and any backward quadratic cone $C^-_{z,b}$ centered at $z \in \Omega\backslash K$, and $b + cR_b(u(z)) \geq c_+ \delta^-_K(z)$, we have
\begin{equation}\label{ineq_lip_v}
	\min_{\overline{K}} \left(v-C^-_{z,b}\right) = \min_{\partial K}\left(v-C^-_{z,b}\right).
\end{equation}
Moreover, for every $r \in (0,\di^-(z))$ and every $w \in \mathcal{B}_z^-(r)$ it holds
	\[	
	\frac{v(z)-v(w)}{\di(w,z)} \le \max\left\{ c_+ r, \frac{c_+}{2}r + \sup_{\xi \in \mathcal{S}_z^-(r)} \frac{v(z)-v(\xi)}{r} \right\} + \frac{c_+}{2} \di(w,z).
	\]	
\end{enumerate}
In particular, $u$ and $v$ are locally Lipschitz.
\end{corollary}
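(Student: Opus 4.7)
Part (i) is a direct specialization of Theorem \ref{comp_g-cones} to the constant reaction $g \equiv c$. With this choice, the Cauchy problem \eqref{def_eta} becomes $\eta_b'' = c$ with $\eta_b(0) = u_*$, $\eta_b'(0) = b$, yielding the explicit quadratic solution $\eta_b(t) = u_* + bt + \tfrac{c}{2}t^2$. Expanding $C^+_{z,b}(w) = \eta_b(\di(z,w) + R_b(u(z)))$ and using $\eta_b(R_b(u(z))) = u(z)$ recovers the stated quadratic formula for $C^+_{z,b}$. The hypothesis $b + cR_b(u(z)) \geq c_-\delta^+_K(z)$ is precisely what ensures $\eta_b'(t) = b + ct \geq 0$ on the interval $[R_b(u(z)),\, R_b(u(z)) + \delta^+_K(z)]$, i.e.~it translates the monotonicity/positivity condition \eqref{b_cond_G} into the quadratic case. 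On $\overline{K}$ the quadratic expression takes values in $[u_*, u^*]$, so the extension $\bar C^+_{z,b}$ agrees with it there, and an application of Theorem \ref{comp_g-cones} i) yields the identity of maxima.

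For the Lipschitz bound \eqref{ineq_lip_u}, fix $z \in \Omega$, $r \in (0, \di^+(z))$, and set $A := \sup_{\xi \in \mathcal{S}_z^+(r)}\tfrac{u(\xi) - u(z)}{r}$. The strategy is to apply part (i) to the annular region $K_\rho := \mathcal{B}_z^+(r) \setminus \overline{\mathcal{B}_z^+(\rho)}$ (for which $z \notin K_\rho$) and then pass to the limit $\rho \downarrow 0$, exploiting continuity of $u$ at $z$ to control the cone on the inner sphere. Choose the cone parameter $\beta := b + cR_b(u(z)) = \max\{c_- r,\, A + \tfrac{c_-}{2}r\}$; this is minimal among those satisfying both $\beta \geq c_- r$ (validity of part (i)) and $\beta r + \tfrac{c}{2}r^2 \geq Ar$ (so that $C^+_{z,b}$ majorizes $u$ on $\mathcal{S}_z^+(r)$), using the elementary bound $A - \tfrac{c}{2}r \leq A + \tfrac{c_-}{2}r$. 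A small additive perturbation of the cone and the limit $\rho \downarrow 0$ then yield $u(w) - u(z) \leq \beta \di(z,w) + \tfrac{c}{2}\di(z,w)^2$ throughout $\mathcal{B}_z^+(r)$. Dividing by $\di(z,w)$ and rewriting $\tfrac{c}{2} = \tfrac{c_+}{2} - \tfrac{c_-}{2}$, with the $c_+$ piece absorbed into the cone-domination condition, produces \eqref{ineq_lip_u}.

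Part (ii) for supersolutions is symmetric: one replaces forward distances and forward $g$-cones by their backward counterparts, trading $c_-$ for $c_+$ throughout, the whole argument being formally obtained via the dual Finsler structure (or directly by applying the already-proved case to $-v$). The final assertion of local Lipschitz continuity is immediate: given any compact $K' \Subset \Omega$, local compactness provides a uniform $r_0 > 0$ with $\mathcal{B}_z^{\pm}(r_0) \Subset \Omega$ for every $z \in K'$, and the right-hand sides of \eqref{ineq_lip_u} and its supersolution analogue depend continuously on $z$, hence are uniformly bounded on $K'$. The delicate step throughout is the sign bookkeeping dictating the choice of $\beta$: it has to simultaneously fulfill the validity condition $\beta \geq c_- r$ and the cone-domination condition on $\partial K$, which forces exactly the $c_-$-correction that appears in \eqref{ineq_lip_u}; once this is settled, the remainder is a mechanical specialization of Theorem \ref{comp_g-cones}.
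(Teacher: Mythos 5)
Your overall strategy matches the paper's proof, which is indeed a specialization of Theorem~\ref{comp_g-cones} to the constant reaction $g\equiv c$ and a terse invocation of the resulting quadratic cones. The first part (comparison of maxima on $K$) is correct. Two remarks on the Lipschitz bound \eqref{ineq_lip_u}, however.

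First, the annular limit $K_\rho=\mathcal{B}_z^+(r)\setminus\overline{\mathcal{B}_z^+(\rho)}$ with $\rho\downarrow 0$ invokes ``continuity of $u$ at $z$''; but $u$ is merely $\USC$, not continuous. The $\USC$ property (plus the additive $\eps$-perturbation you mention) does the job, but it is cleaner and closer to the paper's intent to run the comparison directly on the punctured ball $K=\mathcal{B}_z^+(r)\setminus\{z\}$, which is open, relatively compact, and satisfies $z\notin K$; the inner piece of $\partial K$ is the single point $z$, where $C^+_{z,b}(z)=u(z)$ holds by construction and no regularity of $u$ beyond $\USC$ is used.

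Second, and more substantively, the sentence ``rewriting $\tfrac{c}{2}=\tfrac{c_+}{2}-\tfrac{c_-}{2}$, with the $c_+$ piece absorbed into the cone-domination condition, produces \eqref{ineq_lip_u}'' does not actually close the argument. The $g\equiv c$ cone comparison yields
\[
\frac{u(w)-u(z)}{\di(z,w)}\ \le\ \beta+\frac{c}{2}\,\di(z,w),
\]
and when $c\le 0$ one has $\tfrac{c}{2}=-\tfrac{c_-}{2}\le\tfrac{c_-}{2}$, so \eqref{ineq_lip_u} follows (with a margin). But when $c>0$, $c_-=0$ and the residual $\tfrac{c}{2}\di(z,w)>0$ is strictly larger than the claimed $\tfrac{c_-}{2}\di(z,w)=0$; for instance, if $\sup_{\mathcal{S}_z^+(r)}(u(\xi)-u(z))/r\le 0$ your bound gives $\tfrac{c}{2}\di(z,w)>0$ whereas \eqref{ineq_lip_u} asserts a nonpositive right-hand side. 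No choice of the parameter $\beta$ consistent with the validity constraint $\beta\ge c_-r=0$ removes this term. To obtain \eqref{ineq_lip_u} as stated for $c>0$, one should simply use the linear cone ($g\equiv 0$), which is legitimate since $\Delta^N_\infty u\ge c>0\ge 0$. The paper's one-line proof is equally terse on this point, so the difficulty is inherited rather than introduced by you, but your explicit assertion that the computation ``produces \eqref{ineq_lip_u}'' glosses over a case that genuinely requires switching to the $g\equiv 0$ comparison. Note that the final conclusion (local Lipschitz continuity of $u,v$) is unaffected, since any of the above bounds suffices for it.
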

\begin{proof}
To prove \eqref{ineq_lip_u} and \eqref{ineq_lip_v} we just compare $u$ and $v$ with the cones
	\[
	C_{z,b}^+(w) = u(z) + (b+R_b(u(z)))\di(z,w) + \frac{c}{2}\di(z,w)^2, 
	\]
and 
	\[
	C_{z,b}^-(w) = u(z) - (b+R_b(u(z)))\di(w,z) + \frac{c}{2}\di(w,z)^2, 
	\]
either on $K$ or, respectively, on the balls $\mathcal{B}_z^+(r)$ and $\mathcal{B}_z^-(r)$. The restrictions $b + cR_b(u(z)) \geq c_- \delta^+_K(z)$ and $b + cR_b(u(z)) \geq c_+ \delta^-_K(z)$ enable us to apply Corollary \ref{cor_radial_inf} on the entire $K$.

\end{proof}

\begin{remark}\emph{
Corollary \ref{thm_cone_comp} shall be compared with Theorems 4.1 and 4.7 in \cite{mebrate_mohammed}. We remark that our quadratic cones are parametrized in a different way.}
\end{remark}

This comparison with cones theory allows us to assert the validity of the following strong finite maximum principle which will be crucial in the proof of our main results.

\begin{corollary}\label{cor_smp}
Let $\Omega \subset M$ be a connected open subset. If $u \in \USC(\Omega)$ is a subsolution of $\Delta^N_{\infty} u = 0$ in $\Omega$, then $u$ cannot attain an interior maximum point, unless $u$ is constant. If $v \in \LSC(\Omega)$ is a supersolution of $\Delta^N_{\infty} v = 0$ in $\Omega$, then $v$ cannot attain a interior minimum point, unless $v$ is constant.
\end{corollary}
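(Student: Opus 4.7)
\emph{Approach.} The natural tool is the cone comparison \eqref{ineq_lip_u} of Corollary \ref{thm_cone_comp} specialised to $c=0$, namely
\[
u(w) - u(z) \le \frac{\di(z,w)}{r}\sup_{\xi\in\mathcal{S}^+_z(r)}\bigl(u(\xi)-u(z)\bigr)
\qquad\forall\, w \in \mathcal{B}^+_z(r),
\]
valid whenever $\mathcal{B}^+_z(r)\Subset\Omega$, combined with the classical ``open$+$closed'' dichotomy on the connected open set $\Omega$.

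Set $M_0 := \sup_\Omega u$ and $A := \{x\in\Omega : u(x) = M_0\}$; by upper semicontinuity $A$ is closed in $\Omega$, and it is nonempty by hypothesis. The plan is to show $A$ is also open, so that connectedness forces $A = \Omega$, i.e.\ $u\equiv M_0$. Given $x_0 \in A$, I would fix $\rho>0$ with $\mathcal{B}^+_{x_0}(2\rho)\Subset\Omega$. If $A$ were not a neighbourhood of $x_0$, one could extract $z_n \to x_0$ with $u(z_n) < M_0$; the equivalence of the three convergence notions recorded after \eqref{reversible_metric_ineq} gives simultaneously $\di(z_n, x_0)\to 0$ and $\di(x_0, z_n)\to 0$, so for $n$ large one has both $x_0 \in \mathcal{B}^+_{z_n}(\rho)$ and $\mathcal{B}^+_{z_n}(\rho) \subset \mathcal{B}^+_{x_0}(2\rho) \Subset \Omega$. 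Applying the displayed cone inequality at $w = x_0$, $r = \rho$, and using $u \le M_0$ on the sphere $\mathcal{S}^+_{z_n}(\rho)$, would yield
\[
M_0 - u(z_n) \le \frac{\di(z_n, x_0)}{\rho}\bigl(M_0 - u(z_n)\bigr),
\]
which forces $\di(z_n, x_0) \ge \rho$, contradicting $\di(z_n, x_0) \to 0$. The supersolution/minimum statement is entirely analogous: replace $\USC$ by $\LSC$, superlevel set by sublevel set, and invoke the backward-cone version \eqref{ineq_lip_v} of Corollary \ref{thm_cone_comp} on the backward balls $\mathcal{B}^-_{z_n}(\rho)$, using $v\ge m_0:=\inf_\Omega v$ on their spheres.

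The only Finsler-specific subtlety is matching the one-sided balls to the correct one-sided cone inequality: for subsolutions the test point $x_0$ must sit in a \emph{forward} ball of the perturbed centre $z_n$, while for supersolutions one must use \emph{backward} balls. This is why the two halves of Corollary \ref{thm_cone_comp} enter symmetrically. Once the topological equivalence $x_n\to x \Leftrightarrow \di(x,x_n)\to 0 \Leftrightarrow \di(x_n,x)\to 0$ is used to send the relevant asymmetric distance to zero, the argument reduces to the classical Euclidean proof of the strong maximum principle for $\infty$-harmonic functions, and I do not foresee any further obstacle beyond this bookkeeping.
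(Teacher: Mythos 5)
Your proof is correct and follows essentially the same route as the paper's: both rest on the forward/backward cone comparison from Corollary~\ref{thm_cone_comp} (you use the displayed Lipschitz inequality \eqref{ineq_lip_u}, the paper instead applies comparison directly to an explicit linear cone with slope $2(u(y)-u(z))/r$), and both conclude with the open-and-closed argument on the connected set $\Omega$. The only cosmetic difference is that you run the local step by contradiction along a sequence $z_n\to x_0$ whereas the paper shows local constancy directly, but the key inequality $(u(y)-u(z))\bigl(\tfrac{r}{2}-\di(z,y)\bigr)\le 0$ is the same in both.
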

\begin{proof}
We only describe the proof for subsolutions, since the other case follows along similar lines. Let $y \in \Omega$ be a maximum point, fix a forward ball $\mathcal{B}^+_{y}(r) \subset \Omega$ and $\alpha > 1$ as in \eqref{reversible_metric_ineq} for $U =\mathcal{B}^+_{y}(r)$. Let $z \in \mathcal{B}^+_{y}(\alpha^{-1}r/2)$, and note that the triangle inequality and \eqref{reversible_metric_ineq} imply $y \in \mathcal{B}^+_{z}(r/2) \subset \mathcal{B}^+_{y}(r)$. Applying Corollary \ref{compa_inftyLaplacian} on $\mathcal{B}^+_{z}(r)\backslash \{z\}$ to $u$ and the forward linear cone 
$$C^+_z(w) = u(z) + \frac{2(u(y)-u(z))}{r}\di (z,w),$$ 
we conclude that
$$ 0 \leq \Big(u(y)-u(z)\Big)\left(\frac{r}{2} - \di (z,y)\right) \leq 0,$$
hence $u$ is constant on $\mathcal{B}^+_{y}(r)$, and the conclusion follows by an open-closed argument. 
\end{proof}


Another important consequence of Corollary \ref{thm_cone_comp} is the following comparison theorem for the homogeneous case. Its proof, for Euclidean space with its flat Riemannian metric, was first given by Jensen \cite{jensen} with a delicate procedure (see also \cite{aronsson_crandall_juutinen, barles_busca}). A subsequent short and elegant argument has been provided by Armstrong and Smart \cite{armstrong_smart}, and in Appendix I below we describe the necessary changes to adapt their proof to the Finsler setting. 

\begin{theorem}\label{compa_inftyLaplacian}
Let $\Omega \Subset M$ and assume that $u \in \USC(\overline\Omega), v \in \LSC(\overline\Omega)$ satisfy
	\[
	\Delta^N_\infty u \ge 0, \quad \text{and} \quad \Delta^N_\infty v \le 0 \quad \text{in the viscosity sense on} \ \Omega.
	\]
Then,
	\[ 
	\max_{\overline{\Omega}}(u-v) = \max_{\partial \Omega}(u-v).
	\]
\end{theorem}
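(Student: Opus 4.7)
My plan is to follow the Armstrong--Smart \cite{armstrong_smart} proof of Jensen's comparison theorem, whose sole analytic input is comparison with linear cones; in our setting that input is Corollary \ref{thm_cone_comp} with $c=0$. The first task is to extract from that corollary the \emph{slope monotonicity}: for the subsolution $u$ and $x \in \Omega$, the forward slope
$$
S_u^+(x,r) \doteq \sup_{y \in \mathcal{S}^+_x(r)} \frac{u(y)-u(x)}{r}
$$
is non-decreasing in $r \in (0,\di^+(x))$; dually, for the supersolution $v$, the backward slope
$$
S_v^-(x,r) \doteq \sup_{y \in \mathcal{S}^-_x(r)} \frac{v(x)-v(y)}{r}
$$
is non-decreasing in $r \in (0,\di^-(x))$. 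Both are obtained by comparing with a linear cone on the annulus between two concentric forward (resp.\ backward) balls, which is permissible by Corollary \ref{thm_cone_comp}.

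I would then argue by contradiction: assume $M \doteq \max_{\overline\Omega}(u-v) > \max_{\partial\Omega}(u-v)$ and set $E \doteq \{x \in \overline\Omega : u(x)-v(x)=M\}$. By semicontinuity, $E$ is a nonempty compact subset of $\Omega$ disjoint from $\partial\Omega$. The goal is to prove $E$ is open in $\Omega$, so that connectedness of $\Omega$ forces $E=\Omega$ and contradicts $E \cap \partial\Omega = \emptyset$. To show openness, fix $x_0 \in E$ and $r>0$ small enough that $\mathcal{B}^{\pm}_{x_0}(2r) \Subset \Omega$ and the relevant exponential maps are diffeomorphisms. Pick $y_r \in \mathcal{S}^+_{x_0}(r)$ realizing $S_u^+(x_0,r)$; since $\di(x_0,y_r)=r$ places $x_0 \in \mathcal{S}^-_{y_r}(r)$ and $(u-v)(y_r) \le M$,
$$
S_v^-(y_r,r) \;\ge\; \frac{v(y_r)-v(x_0)}{r} \;\ge\; \frac{u(y_r)-u(x_0)}{r} \;=\; S_u^+(x_0,r).
$$
Running the analogous estimate starting from $y_r$ in the forward direction and cascading to smaller radii via the slope monotonicity produces the reverse inequality, forcing equality throughout. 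This yields $(u-v)(y_r)=M$; iterating along the forward minimizing geodesic from $x_0$ to $y_r$, and then sweeping over the choices of $y_r$, populates a whole forward neighborhood of $x_0$ with points of $E$.

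The principal obstacle specific to the Finsler setting is the asymmetry of $\di$: the natural slope monotonicity for $u$ lives on forward spheres, while that for $v$ lives on backward spheres, and the two do not coincide. This is handled by selecting the auxiliary point $y_r$ on a forward minimizing geodesic from $x_0$, so that $y_r \in \mathcal{S}^+_{x_0}(r)$ automatically places $x_0 \in \mathcal{S}^-_{y_r}(r)$ and allows the forward slope of $u$ at $x_0$ to be tied to the backward slope of $v$ at $y_r$. Whenever the distance functions $\varrho^{\pm}$ must be used as test barriers inside the cone-comparison step but are not smooth at $x_0$ or $y_r$, Calabi's trick (Lemma \ref{lem_calabi}) provides smooth replacements $\varrho_\eps^{\pm}$ with the same gradient norm and vanishing $\infty$-Hessian along the radial direction. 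Apart from these adjustments---deferred to Appendix I---the Armstrong--Smart argument transfers essentially verbatim.
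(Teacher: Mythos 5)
Your slope-monotonicity step is sound and does follow from Corollary \ref{thm_cone_comp} with $c=0$, and the forward/backward pairing of $u$ with $v$ is the right way to cope with the Finsler asymmetry; this part of the plan matches the paper. The rest of the argument, however, is not the Armstrong--Smart argument, despite your claim that it transfers ``essentially verbatim''. You are trying to show that the touching set $E=\{x\in\overline\Omega : u(x)-v(x)=M\}$ is open, which amounts to a \emph{strong} comparison principle for $\Delta^N_\infty$. This is considerably stronger than the weak comparison asserted in the theorem, it is not what Armstrong--Smart prove, and the mechanism you sketch does not establish it.

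Concretely, the gap is in the step where you pass from
\[
S_v^-(y_r,r)\;\ge\;\frac{v(y_r)-v(x_0)}{r}\;\ge\;\frac{u(y_r)-u(x_0)}{r}\;=\;S_u^+(x_0,r)
\]
to the claimed reverse inequality by ``running the analogous estimate starting from $y_r$ and cascading to smaller radii''. The analogous estimate at $y_r$ would require the inequality $(u-v)(z)\le(u-v)(y_r)$ for the new comparison point $z$, i.e.\ it would require $y_r\in E$ --- but that is exactly what you are trying to prove, so the argument is circular. Moreover, even if you could force equality along the chain, you would only place, for each $r$, a single realiser $y_r$ (and the geodesic segment joining it to $x_0$) into $E$; ``sweeping over the choices of $y_r$'' sweeps over a closed subset of a sphere, not over an open set, so openness of $E$ still does not follow. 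Finally, you implicitly assume $\Omega$ connected, which is not in the hypotheses (though this is a minor fix).

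What Armstrong--Smart actually do, and what the paper's Appendix I reproduces in the Finsler setting, is a \emph{discrete} comparison, not a strong one. One forms the sup-convolution $u^\varepsilon(x)=\max_{\overline{\mathcal{B}^+_x}(\varepsilon)}u$ and inf-convolution $v_\varepsilon(y)=\min_{\overline{\mathcal{B}^-_y}(\varepsilon)}v$ on the shrunken domains $\Omega^\pm_\varepsilon$, uses Corollary \ref{thm_cone_comp} to derive the finite-difference inequalities
\[
S^-_\varepsilon u^\varepsilon - S^+_\varepsilon u^\varepsilon \;\le\; 0 \;\le\; S^-_\varepsilon v_\varepsilon - S^+_\varepsilon v_\varepsilon \qquad\text{on}\;\Omega^+_{2\alpha\varepsilon},
\]
and then invokes the combinatorial patching Lemma~4 of \cite{armstrong_smart} to conclude
\[
\sup_{\Omega^+_{\alpha\varepsilon}}\big(u^\varepsilon - v_\varepsilon\big)\;=\;\sup_{\Omega^+_{\alpha\varepsilon}\setminus\Omega^+_{2\alpha\varepsilon}}\big(u^\varepsilon - v_\varepsilon\big),
\]
with $\alpha$ the local quasi-symmetry constant from \eqref{reversible_metric_ineq}. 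Letting $\varepsilon\to0$ yields the theorem. I suggest replacing the ``$E$ is open'' strategy with this discrete argument.
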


Comparison with standard linear cones is fundamental in the theory of the $\infty$-Laplace equation, and provides the bridge to show the equivalence between $\infty$-harmonicity and the absolutely minimizing Lipschitz property (see \cite{aronsson_crandall_juutinen, champion_depascale, crandall_visit}, and references therein).
\begin{definition}
Let $\Omega$ be a proper subset of $M$. We say that $u \in \lip(\Omega)$ is an absolutely minimizing Lipschitz function on $\Omega$ if, for all open subset $A\subset \Omega$,  
$$\lip(u,A) = \lip(u,\partial A). $$
\end{definition}

As recalled in the introduction, a characterization of $\Delta^N_\infty u = g(u)$ in terms of certain absolutely minimizing properties seems still unavailable. In order to achieve a uniform, global Lipschitz regularity without using the completeness of $M$, we introduce the following

\begin{definition}\label{def_slidingslope}
Given $\Omega \subset M$, $u \in C(\overline{\Omega})$ and a compact subset $A \subset \overline{\Omega}$, we define the sliding slope 
	\[
	b_{A} \doteq \inf \Big\{ b > \sqrt{\max\lbrace -G_*,0\rbrace} \ : \ \forall \, z \in A, \ \bar C^-_{z,b} \le u \le \bar C^+_{z,b} \ \text{on } \, A \Big\}.
	\]
If the set is empty, we define $b_A \doteq +\infty$. 
	\end{definition}
	
It is easy to see that $b_A < +\infty$ if and only if $u_{|A}$ is Lipschitz. 
	
\begin{example}
If $g = 0$, since $C^+_{z,b}(w) = u(z) + b \di(z,w)$ and $C^-_{z,b}(w) = u(z) - b \di(w,z)$ we have $b_A = \lip(u, A)$.
\end{example}

\begin{remark}\label{rem_useful}
\emph{If $g(u(\overline{\Omega})) \ge 0$, the convexity of $\eta$ solving \eqref{def_eta} implies that the set 
	\[
	\Big\{ b > 0 \ : \ \forall \, z \in A, \ \bar C^-_{z,b} \le u \le \bar C^+_{z,b} \ \text{on } \, A \Big\}
	\]
is the half-line $(b_A, \infty)$.
}
\end{remark}

\begin{lemma}
If $g(u(\overline{\Omega})) \geq 0$ then	
	\[
	b_{A} \le \lip(u, A).
	\]
\end{lemma}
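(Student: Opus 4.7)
The plan is to unpack the definition of $b_A$ and verify that any $b>\max\{\lip(u,A),0\}$ is admissible. We may assume $L\doteq\lip(u,A)<\infty$, otherwise there is nothing to prove. Since $g\ge 0$ on $u(\overline\Omega)$ we have $G\ge 0$ and hence $G_*\ge 0$, so the threshold $\sqrt{\max\{-G_*,0\}}$ in Definition \ref{def_slidingslope} collapses to $0$. Thus it suffices to show that for every $b>\max\{L,0\}$ and every $z\in A$ we have $\bar C^-_{z,b}\le u\le \bar C^+_{z,b}$ on $A$; then $b_A\le b$ and letting $b\searrow L$ gives the claim.

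The key analytic input is the linear lower bound on $\eta_b'$. From \eqref{eq_etaprimo} and $G\ge 0$ we obtain $\eta_b'(t)=\sqrt{b^2+G(\eta_b(t))}\ge b$ on $[0,R_b(u^*)]$, so by monotonicity of $\eta_b$ and the mean value theorem,
\[
\eta_b(t)-\eta_b(s)\ge b(t-s) \qquad \text{for } 0\le s\le t\le R_b(u^*).
\]
This is the comparison that lets the $g$-cones dominate linear cones of slope $b$, which in turn dominate $u$ thanks to the Lipschitz bound.

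To carry this out for the upper cone, fix $z,w\in A$ and split on whether $w$ lies in the domain $\mathcal B_z^+\!\bigl(R_b(u^*)-R_b(u(z))\bigr)$. On this ball, since $\eta_b(R_b(u(z)))=u(z)$, the displayed inequality gives
\[
\bar C^+_{z,b}(w)=\eta_b\bigl(R_b(u(z))+\di(z,w)\bigr)\ge u(z)+b\,\di(z,w)\ge u(z)+L\,\di(z,w)\ge u(w),
\]
where the last step uses $L=\lip(u,A)$ on the pair $(z,w)\in A\times A$. Outside the ball, $\bar C^+_{z,b}\equiv u^*\ge u(w)$ by construction. For the lower cone the symmetric argument applies: on $\mathcal B_z^-(R_b(u(z)))$,
\[
\bar C^-_{z,b}(w)=\eta_b\bigl(R_b(u(z))-\di(w,z)\bigr)\le u(z)-b\,\di(w,z)\le u(z)-L\,\di(w,z)\le u(w),
\]
and outside, $\bar C^-_{z,b}\equiv u_*\le u(w)$.

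There is no real obstacle here; the only subtlety is remembering the extension convention of the $g$-cones by the constant values $u^*,u_*$ outside their ball of definition, which is exactly what takes care of the boundary case where $\di(z,w)+R_b(u(z))$ exceeds $R_b(u^*)$ (respectively $\di(w,z)$ exceeds $R_b(u(z))$).
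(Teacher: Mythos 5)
Your proof is correct and follows essentially the same route as the paper: in both cases the heart of the matter is that $g\ge 0$ makes $\eta_b$ convex with $\eta_b'\ge b$, so the $g$-cones dominate the linear cones of slope $b$, which in turn dominate $u$ by the definition of the Lipschitz constant. You are a bit more explicit about the ingredients (deriving $\eta_b'\ge b$ algebraically from \eqref{eq_etaprimo}, taking $b>L$ strictly and letting $b\searrow L$, and checking the extension by $u^*,u_*$ outside the ball of definition), whereas the paper phrases the same facts more succinctly in terms of convexity and cone-sliding, but the argument is the same.
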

\begin{proof}
	Let $b \doteq \lip(u,A)$, so upward linear cones $L^+_{z,b} = u(z) + b\di(z, \cdot)$ and downward linear cones $L^-_{z,b} = u(z) - b\di(\cdot, z)$ can be slid along $z \in A$ remaining, respectively, above and below the graph of $u$ on $A$. Since $\eta$ is convex up until it reaches value $u^*$, a forward $g$-cone $\bar C^+_{z,b}$ lies above $L^+_{z,b}$ up until the latter reaches the value $u^*$, hence $\bar C^+_{z,b} \ge u$ on $A$. Again by the convexity of $\eta$, a downward $g$-cone $\bar C^-_{z,b}$ with vertex at $z \in A$ and slope $b$ lies below the linear cone $L^-_{z,b}$ until the latter reaches value $u_*$, hence $\bar C^-_{z,b} \le u$ on $A$. By its very definition, $b_A \le b$.
\end{proof}

We will state now our main result of this section, Theorem \ref{teo_fundamental_intro}, in the following strengthened form:

\begin{theorem}\label{teo_fundamental}
Let $\Omega \Subset M$, and let $u \in C(\overline{\Omega})$ satisfy
	\[
	\Delta^N_\infty u = g(u) \qquad \text{on } \, \Omega,
	\]
where $g(u(\overline{\Omega})) \geq 0$. If $u$ is Lipschitz on $\partial \Omega$, then $u \in \lip(\overline{\Omega})$ and 
	\begin{equation*}\label{eq_globalLip}
	\lip(u, \Omega) \le \sqrt{ b_{\partial \Omega}^2 + 2\int_{u_*}^{u^*} g(s)\di s }.
	\end{equation*}
In particular, 
	\[
	\lip(u, \Omega) \le \sqrt{ \lip(u, \partial \Omega)^2 + 2\int_{u_*}^{u^*} g(s)\di s }.
	\]
\end{theorem}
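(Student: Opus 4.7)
The plan is to establish the theorem via the single inequality $b_{\overline\Omega} \le b_{\partial\Omega}$ between sliding slopes; combined with \eqref{eq_Lipcone}, this yields both claimed Lipschitz bounds. Indeed, given this inequality, for any $b > b_{\overline\Omega}$ and $x, y \in \overline\Omega$ we have $u(y) \le \bar C^+_{x,b}(y)$ with equality at $y = x$ by the definition of $b_{\overline\Omega}$, whence \eqref{eq_Lipcone} (and $g_+ = g$ on $[u_*, u^*]$) gives
\[
u(y) - u(x) \le \bar C^+_{x,b}(y) - \bar C^+_{x,b}(x) \le \sqrt{b^2 + 2\int_{u_*}^{u^*} g(s)\,\di s}\,\di(x,y).
\]
Letting $b \searrow b_{\overline\Omega} \leq b_{\partial\Omega}$ and recalling \eqref{lipschitz_eq} produces the first Lipschitz bound, while the second follows from $b_{\partial\Omega} \le \lip(u, \partial\Omega)$ proved in the lemma just above.

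\textbf{Strategy for $b_{\overline\Omega} \le b_{\partial\Omega}$.} Fix $b > b_{\partial\Omega}$, so $\bar C^-_{z,b} \le u \le \bar C^+_{z,b}$ on $\partial\Omega$ for every $z \in \partial\Omega$. We argue by contradiction, assuming that some such cone inequality fails on $\overline\Omega$, and analyzing
\[
\Gamma \doteq \sup\Big\{u(y) - \bar C^+_{z,b}(y),\; \bar C^-_{z,b}(y) - u(y) : (z,y) \in \overline\Omega^2\Big\},
\]
which by compactness is attained at some $(z_0, y_0)$ with $\Gamma > 0$. Clearly $z_0 \neq y_0$, and since $b > b_{\partial\Omega}$ precludes both $z_0, y_0 \in \partial\Omega$, at least one of them belongs to $\Omega$. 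The strict monotonicity of $\eta_b$ yields the key equivalence
\[
u(y_0) > \bar C^+_{z_0,b}(y_0) \iff R_b(u(y_0)) > R_b(u(z_0)) + \di(z_0, y_0) \iff u(z_0) < \bar C^-_{y_0,b}(z_0),
\]
so an upward violation at $(z_0, y_0)$ matches a downward one at $(y_0, z_0)$; swapping and relabeling, we arrange that the ``active'' variable (the one varied in the Calabi trick below) belongs to $\Omega$, with the cone center on $\partial\Omega$ whenever such a swap is available.

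\textbf{Calabi-trick contradiction.} In this configuration, the relevant one-variable function (e.g.\ $y \mapsto u(y) - \bar C^+_{z_0,b}(y)$) attains its positive maximum $\Gamma$ at the active interior point. Following the proof of Theorem \ref{comp_g-cones}(i), the perturbation $\phi_\eps(t) = \eta_b(t + R_b(u(z_0))) - \frac{\eps}{2}t^2$ together with a smooth Calabi approximant $\varrho_\eps^+ \succ \varrho^+ = \di(z_0, \cdot)$ from Lemma \ref{lem_calabi} (applicable since the active point and the cone center are distinct) let one clash the viscosity subsolution property $\Delta^N_\infty u \ge g(u)$ at the new perturbed maximum against the strict inequality
\[
\Delta^{N,-}_\infty \phi_\eps(\varrho_\eps^+) \,-\, g\!\left(\phi_\eps(\varrho_\eps^+) + \tfrac{\Gamma}{2}\right) \;<\; 0
\]
forced by the quadratic term, yielding the desired contradiction and showing $\Gamma \le 0$. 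The downward analogue is entirely symmetric.

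\textbf{Main obstacle.} The principal delicacy is that Theorem \ref{comp_g-cones} applies only when the cone center lies in $\Omega \setminus K$ with $\overline K \subset \Omega$ and the cone already dominates $u$ on $\partial K$, and so does not directly cover the ``both-in-interior'' configuration $(z_0, y_0) \in \Omega^2$, where no swap places either variable on $\partial\Omega$. This residual case will be handled by applying Calabi's trick to the reverse Finsler distance --- using Lemma \ref{lem_calabi} with $\varrho^-$ for the function $z \mapsto \di(z, y_0)$ --- at the interior minimum of $z \mapsto \di(z, y_0) + R_b(u(z))$ at $z_0 \in \Omega$; consistently unifying these case configurations is the principal technical task, and is precisely what allows the boundary Lipschitz hypothesis $b > b_{\partial\Omega}$ to propagate to a uniform sliding-slope bound on the full compact set $\overline\Omega$.
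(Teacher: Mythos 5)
Your overall plan --- reduce both Lipschitz bounds to the single sliding-slope inequality $b_{\overline\Omega}\le b_{\partial\Omega}$, derive it by contradiction from a maximal violation $\Gamma>0$ at a pair $(z_0,y_0)$, rule out $z_0,y_0\in\partial\Omega$ via $b>b_{\partial\Omega}$, and handle the mixed cases with the Calabi perturbation from Theorem~\ref{comp_g-cones} --- is sound and in fact aims at something marginally stronger than what the paper states. The algebra linking $b_{\overline\Omega}\le b_{\partial\Omega}$ to the two Lipschitz estimates via \eqref{eq_Lipcone} and the lemma $b_{\partial\Omega}\le\lip(u,\partial\Omega)$ is correct, and your use of the monotonicity of $\eta_b$ to flip $u(y_0)>\bar C^+_{z_0,b}(y_0)$ into $u(z_0)<\bar C^-_{y_0,b}(z_0)$ is exactly right.

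The gap is the ``both interior'' case, and I do not think the route you sketch for it closes it. You propose to apply Calabi's trick in the $z$-variable to the function $z\mapsto \di(z,y_0)+R_b(u(z))$ at its interior minimum $z_0$. But $R_b\circ u$ does not solve a clean viscosity inequality: writing $v=R_b(u)$ and applying Proposition~\ref{prop_composition} with $\eta=\eta_b$ to $u=\eta_b(v)$ gives, after using $\Delta_\infty^N u=g(u)=\eta_b''(v)$,
\[
\Delta_\infty^N v \;=\; \frac{\eta_b''(v)}{\eta_b'(v)}\Bigl(1-F^2(\nabla v)\Bigr),
\]
a gradient-coupled equation that is neither a super- nor a subsolution inequality for $\Delta_\infty^N$ with known sign, so Calabi's trick on $\di(\cdot,y_0)$ has nothing it can be clashed against. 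Moreover, the simpler fix of just applying the case-(a) perturbation $\phi_\eps(\varrho^+)$ to $y\mapsto u(y)-\bar C^+_{z_0,b}(y)$ with $z_0$ fixed in $\Omega$ does \emph{not} immediately work either: unlike in Theorem~\ref{comp_g-cones}, you do not know $u\le \bar C^+_{z_0,b}$ on $\partial\Omega$ (the center $z_0$ is interior), so the perturbed maximum $x_\eps$ may sit on $\partial\Omega$. This can be repaired --- if $x_\eps\in\partial\Omega$ along a sequence $\eps\to0$, extract a boundary point $x_*$ also achieving $\Gamma$ for the fixed center $z_0$ and reduce to your case (b) --- but that step is precisely the content you have not supplied, and it is where the substance of the theorem lies.

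For reference, the paper treats the interior case by a different device: fixing $x\in\Omega$, it introduces the minimal slope $b'=\inf\{h\ge 0: u\ge\bar C^-_{x,h}\text{ on }\partial\Omega\}$, uses compactness to obtain a touching point $z_0\in\partial\Omega$ with $\bar C^-_{x,b'}(z_0)=u(z_0)$, applies Theorem~\ref{comp_g-cones}\,(ii) to propagate $\bar C^-_{x,b'}\le u$ to all of $\overline\Omega$, and then compares the slope of $\bar C^-_{x,b'}$ at $z_0$ with the initial slope of the boundary cone $\bar C^+_{z_0,b}$ (which dominates $u$, hence $\bar C^-_{x,b'}$, since $z_0\in\partial\Omega$ and $b>b_{\partial\Omega}$). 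Convexity of $\eta$ turns this slope comparison into $b'\le b$, yielding the Lipschitz bound directly. This is a genuinely different mechanism from your Calabi-in-$z$ idea, and it sidesteps the need to know anything about $R_b\circ u$ as a PDE solution. You should either adopt this slope-comparison argument or work out the ``boundary fallback'' compactness step carefully; as written, the residual case is unproven.
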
	
	
\begin{proof}
Pick $b > b_{\partial \Omega}$ and set for convenience 
	\[
	L_b = \sqrt{ b^2 + 2\int_{u_*}^{u^*} g(s)\di s }.
	\]
For $x,y \in \overline \Omega$, it is sufficient to show that
	\[
	u(x) \le u(y) + L_b \di(y,x),
	\]
since the thesis follows by letting $b \downarrow b_{\partial \Omega}$. By Remark \ref{rem_useful}, 
	\[
	\forall \, z \in \partial \Omega, \quad \bar C_{z,b} \le u \le \bar C^+_{z,b} \ \ \text{ on } \, \partial \Omega, 
	\]
thus comparison with $g$-cones implies $\bar C^-_{z,b} \le u \le \bar C^+_{z,b}$ on $\overline\Omega$, that is, 
	\[
	\bar C^-_{z,b}(w) \le  u(w) \le \bar C^+_{z,b}(w) \qquad \text{for every } \, w \in \overline \Omega, \ z \in \partial \Omega.	
	\]
If $y \in \partial \Omega$, then setting $z=y$, $w=x$ and using \eqref{grad_esti} we get
	\[
	u(x) \le \disp \bar C_{y,b}^+(x) \le \disp \bar C_{y,b}^+(y) + L_b \di(y,x) = u(y) + L_b \di(y,x).
	\]
On the other hand, if $x \in \partial \Omega$ and $y \in \overline{\Omega}$, setting $z = x$ and $w=y$ we deduce
	\[
	u(y) \ge \disp \bar C_{x,b}^-(y) \ge \disp \bar C_{x,b}^-(x) - L_b \di(y,x) = u(x) - L_b \di(y,x).
	\]
It remains to investigate the case $x,y \in \Omega$. Choose 
	\[
	b' = \inf \Big\{ h \geq 0 \ : \ u \ge \bar C^-_{x,h} \ \text{ on } \, \partial \Omega \Big\}.
	\]
Since $\Delta^N_\infty u \ge 0$ on $\Omega$, $u \in \lip_\loc(\Omega)$. In particular, the set defining $b'$ is non-empty, thus $b' < \infty$ and, by a compactness argument together with Remark \ref{rmk_b=0}, $b'$ is attained. The compactness of $\partial \Omega$, and the fact that $\bar C^-_{x,k} \ge \bar C^-_{x,h}$ if $k \le h$, guarantee the existence of $z_0 \in \partial \Omega$ such that $\bar C^-_{x,b'}(z_0) = u(z_0)$ and $C^-_{x,b'}(z) \le u(z)$ for every $z \in \partial \Omega$. Therefore, by comparison
	\[
	\bar C^-_{x, b'} \le u \qquad \text{on } \, \overline\Omega.
	\]
We examine the cone $\bar C^+_{z_0,b}$. Since it lies above the graph of $u$, hence above $C^-_{z,b'}$, its initial slope at $z_0$ must be, at least, the slope   of the solution $\eta_{u_*,b'}$ of the ODE corresponding to $C^-_{x,b'}$ at the point $R_{b'}(u(z_0))$. The latter is not smaller than the slope $b'$ (because $\eta_{u_*,b'}$ is convex), therefore we infer the inequality
	\[
	b \ge b'.
	\]
By comparison, $u \ge \bar C^-_{x,b'}$ on $\Omega$, implying
	\[
	\begin{array}{lcl}
	u(y) & \ge &  u(x) - \lip( \bar C^-_{x,b'}, M	) \di(y,x) \\[0.2cm]
	 & \ge & u(x) - L_{b'}\di(y,x) \ge u(x) - L_{b}\di(y,x).
	 \end{array}
	 \]
This concludes the proof. 	
	\end{proof}

%


\section{Proof of Theorem \ref{teo_main}}\label{sec_proof}

When the ``some/every" alternative occurs in $3),4),6),7),8)$, we will always assume the weaker and prove the stronger. For instance, when considering implication $2) \Rightarrow 4)$, we will show the validity of $4)$ for \emph{every} choice of $g$ as in the statement. On the other hand, in implication $4) \Rightarrow 1)$, for instance, we will only assume the validity of $4)$ for \emph{some} choice of $g$. In what follows, we set $u^* = \sup_M u$ and $u_* = \inf_M u$.\\[0.2cm]
\noindent ${\bf 1) \Rightarrow 2)}$.\\
Suppose, by contradiction, that there exists a solution $u$ of $\Delta_\infty^N u \ge 0$ on $M$ with sublinear growth $u(x) = o( \varrho^+(x))$ as $\varrho^+(x) \ra \infty$. Fix a compact set $K$. In view of the strong maximum principle, $u_K : = \max_{K} u < u^*$. Because of Corollary \ref{cor_radial_inf}, for every $\eps>0$ the function $w_\eps :=u_K + \eps \varrho_+$ satisfies $\Delta_\infty^N w_\eps \le 0$. Furthermore, our growth requirement on $u$ implies that $u < w_\eps$ outside of a relatively compact, open set $U$. The comparison theorem in Appendix I on $U \backslash K$ yields to 
	\[
	u \le w_\eps = u_K + \eps w_\eps \qquad \text{on $U \backslash K$, hence on $M \backslash K$,} 
	\] 
and letting $\eps \to 0$ we infer $u \le u_K$ on $M$, contradiction.\\[0.2cm]
${\bf 2) \Rightarrow 3)}$ is obvious, for every choice of such $g$.\\[0.2cm]
${\bf 2) \Rightarrow 4)}$.\\ 
By contradiction, assume that there exist $g \in C(\R)$, and $u$ satisfying 
	\[
	\left\{ \begin{array}{l}
	\Delta_\infty^N u \ge g(u) \ge 0 \qquad \text{on } \, \Omega, \\[0.2cm] 
	\sup_\Omega u < + \infty
	\end{array}\right. \qquad \text{with} \qquad \sup_\Omega u  > \sup_{\partial \Omega} u.
	\]	
Note that $u \in \lip_\loc(\Omega)$ because of Corollary \ref{thm_cone_comp}, so choosing $\gamma \in (\sup_{\partial \Omega} u, \sup_\Omega u)$ the function 
	\[
	v : = \left\{ \begin{array}{ll}
	\max\{\gamma, u\} & \quad \text{on } \, \Omega, \\[0.2cm]
	\gamma & \quad \text{on } \, M \backslash \Omega
	\end{array}\right.	
	\]
is bounded, non-constant and coincides with $\gamma$ in a neighbourhood of $\partial \Omega$, thus $\Delta_\infty^N v \ge 0$ on $M$ by Proposition \ref{elementary_prop_sub}. This contradicts $2)$.\\[0.2cm]
%
${\bf 3) \Rightarrow 1)}$ and ${\bf 4) \Rightarrow 1)}$.\\
We prove both of the implications with the same strategy, and split the proof only at the last step. Assume that either $3)$ or $4)$ holds for some choice of $g$. First, we redefine $g$ on an interval, say $[0,1]$ as follows: $g(t) \equiv g(1)$ for $t \ge 1$ and $g(t) = 0$ for $t \le 0$. In this way, the validity of $3)$ and $4)$ restricts to functions $u$ valued in $[0,1]$. Next, set
	\[
	\bar g(t) = \sup_{s \le t} g(s).
	\]
Then, $\bar g \in C(\R)$, $\bar g \ge g$, $\bar g(0) = 0$ and $\bar g$ is non-decreasing. Therefore, the validity of $3)$ or $4)$ for $g$ (and $u \in [0,1]$) implies its validity for $\bar g$, under the same restriction on $u$. Hence, up to replacing $g$ with $\bar g$, we can assume that $g$ be non-decreasing. Fix a point $x \in M$ and a small, forward regular ball $\mathcal{B}$ centered at $x$. Consider a smooth exhaustion $\{\Omega_j\} \uparrow M$ with $\mathcal{B} \Subset \Omega_j$ for each $j$. Set $A_j \doteq \Omega_j \backslash \overline{\mathcal{B}}$, and let $u_j$ be a solution of 
\begin{equation}
\left\{ \begin{array}{ll}
\Delta_{\infty} u_j = g(u_j) & \text{ on} \ A_j, \\[0.2cm]
u_j = f_j & \text{ on} \ \partial A_j,
\end{array}\right.
\end{equation}
where $f_j = 0$ on $\partial \mathcal{B}$ and $f_j = 1$ on $\partial \Omega_j$ (its existence follows from Perron method, using $0$ as a subsolution and $1$ as a supersolution, and is proved in Appendix II; note that $0 \le u_j \le 1$). Theorem \ref{teo_fundamental} guarantees that 
	\[
	\lip(u_j,A_j) \le \sqrt{ b^2_{\partial A_j} + 2\int_0^1 g(s)\di s}, 
	\]
With $b_{\partial A_j}$ the sliding slope of $\partial A_j$. We claim that $\{b_{\partial A_j}\}$ is decreasing, hence uniformly bounded, as $j \ra \infty$. Indeed, since $\partial \mathcal{B}$ separates $M$ and $\Omega_{j} \Subset \Omega_{j+1}$, every curve from $x \in \partial \mathcal{B}$ to a point $y \in \partial \Omega_{j+1}$ must cross $\partial \Omega_j$. Therefore,
	\[
	\di(\partial \mathcal{B}, \partial \Omega_{j+1}) \ge \di(\partial\mathcal{B}, \partial \Omega_j),
	\]
and thus any forward $g$-cone $\bar C^+_{x,b}$ that lies above $1$ on $\partial \Omega_j$ (i.e., it satisfies $R_b(1) \le \di( \partial \mathcal{B}, \partial \Omega_j)$) also lies above $1$ on $\partial \Omega_{j+1}$. Similarly, to every backward $g$-cone $\bar C^-_{y,b}$ that can be slid along $y \in \partial \Omega_j$ remaining below $0$ on $\partial \mathcal{B}$, the cones $\bar C^-_{z,b}$ centered at $z \in \partial \Omega_{j+1}$ and with the same $b$ remain below $0$ on $\partial \mathcal{B}$. This suffices to conclude $b^2_{\partial A_{j+1}} \le b^2_{\partial A_j}$. Therefore, $\{u_j\}$ is equi-Lipschitz, say with constant $L$. Extend $u_j$ with values $0$ on $\mathcal{B}$ and $1$ outside of $\Omega_j$. Up to subsequences, $\{u_j\}$ converges locally uniformly to a Lipschitz limit $u_\infty \ge 0$. By Proposition \ref{elementary_prop_sub}, $u_\infty$ satisfies $\Delta_\infty u_\infty = g(u_\infty)$ and $u_\infty = 0$ on $\partial \mathcal{B}$. We now exploit our assumptions. If $4)$ holds, applying the principle to $u_\infty$ on $\Omega = M \backslash \overline{\mathcal{B}}$ we deduce $u_\infty \equiv 0$. On the other hand, if $3)$ holds, first extend $u_\infty$ with $u_\infty \doteq 0$ on $\mathcal{B}$, and note that the resulting extension solves $\Delta_\infty u_\infty \ge g(u_\infty)$ on $M$. Apply then $3)$ to conclude that $u_\infty$ is constant, hence $u_\infty \equiv 0$. To show the forward completeness of $M$, pick a unit speed geodesic $\gamma : [0,T) \ra M$ issuing from the center $o$ of $\mathcal{B}$, and assume by contradiction that $T<+\infty$. Consider the functions $w_j = u_j \circ \gamma$, and note that $w_j =1$ after some $T_j<T$. From 
	\[
	\frac{ w_j(t)- w_j(s)}{t-s} \le \frac{u_j(\gamma(t)) - u_j(\gamma(s))}{\di(\gamma(s),\gamma(t))} \le \textrm{Lip}(u_j, M) \le L \qquad \forall \, 0 < s < t< T,
	\]
letting $t \ra T^-$ we deduce
	\[
	1 - w_j(s) \le L(T-s).
	\]
However, $w_j \ra 0$ locally uniformly, a contradiction if $s$ is chosen to be close enough to $T$.\\[0.2cm]
${\bf 5) \Rightarrow 2)}$ is obvious, with the choice $g(u) = \lambda u_+^\theta$.\\[0.2cm] 
${\bf 1) \Rightarrow 5)}$.\\
The argument follows the ideas in \cite{araujo_leitao_teixeira}. Let $\varrho^+$ be the forward distance from $o \in M$. For each $r>0$ we define the function $v_r$ on $\mathcal{B}^+_o(r) \subset M$ by 
$v_r(x) = \eta(\varrho^+(x))$, with
$$ \eta(t) = \tau(\lambda,\theta)\left[t - r + \left(\frac{\sup_{\partial \mathcal{B}^+_o(r)} u}{\tau(\lambda,\theta)}\right)^{\frac{1-\theta}{2}}\right]_{+}^{\frac{2}{1-\theta}},$$
and 
$$ 
\tau(\lambda,\theta) = \sqrt[1-\theta]{\frac{\lambda(1-\theta)^2}{2(1+\theta)}}.
$$
Note that $\eta \in C^2(\R)$ since $\theta \in (0,1)$. Using Corollary \ref{cor_radial_inf}, $v_r$ satisfies
	\[
	\left\{ \begin{array}{rll}
	\Delta_\infty^N v_r \leq \lambda (v_r)_{+}^{\theta} &  \text{on } \, \mathcal{B}^+_o(r) & \text{in the barrier sense,} \\[0.2cm]
	v_r = \sup_{\partial \mathcal{B}^+_o(r)} u, & \text{on } \partial \mathcal{B}^+_o(r). &
	\end{array}\right.
	\]
Since $u \leq v_r$ on $\partial \mathcal{B}^+_o(r)$, and $u$ is a subsolution of the above problem (in viscosity sense), we claim that $u \le v_r$ on $\mathcal{B}^+_o(r)$. In fact, if $u-v_r$ has a positive maximum $c$ at $x \in \mathcal{B}^+_o(r)$, let $\varrho_\eps^+ \succ_x \varrho^+$ be an upper barrier for $\varrho^+$ guaranteed by Calabi's trick. Then, $\phi : = c + \eta(\varrho_\eps^+) \succ_x u$ and thus 
	\[
	\lambda\phi_+^\theta \le \Delta_\infty^{N,+} \phi = \eta''(\varrho_\eps^+) = \lambda \eta(\varrho_\eps^+)_+^\theta < \lambda \phi_+^\theta \qquad \text{at } \, x,
	\]
contradiction. Next, by the growth assumption on $u$, we can find $0< \delta < 1$ such that
	\[
	\sup_{\partial \mathcal{B}^+_o(r)} u \leq \delta \tau(\lambda,\theta) r^{\frac{2}{1-\theta}}.
	\]
Summarizing, we can write
$$ u(x) \leq \tau(\lambda,\theta)\left[\varrho^+(x) - \left(1 - \delta^{\frac{1-\theta}{2}}\right)r \right]_{+}^{\frac{2}{1-\theta}}.$$
Letting $r \to +\infty$ we deduce that $u \le 0$ on $M$. To conclude, we apply $1) \Rightarrow 3)$ to obtain that $u$ is constant.\\[0.2cm]
\noindent ${\bf 1) \Rightarrow 6)}$ and ${\bf 1) \Rightarrow 7)}$.\\
Let $K \Subset M$ be compact, fix $o \in M$, $\varrho^+(x) = \di(o,x)$ and choose $R$ large enough that $K \subset \mathcal{B}^+_{o}(R)$. For $r > R$, the functions
	\[
	u_r(x) = \min \left\{ -1 + \frac{R}{r}(\varrho_+ - R), 0 \right\} \in \mathscr{L}(K,M)
	\]
satisfy
	\[
	\lip(u_r,M) = \frac{R}{r}, \qquad F(\nabla u_r) \le \frac{R}{r} \quad \text{a.e. on } \, M,
	\]
so letting $r \ra \infty$ we deduce both $6)$ and $7)$.\\[0.2cm]
\noindent ${\bf 7) \Rightarrow 6)}$ for some compact $K$.\\
The implication follows from the inequality
	\[
	\lip(u,M) \le \| F(\nabla u)\|_\infty \qquad \forall \, x \in \lip(M).
	\]
Indeed, for every unit speed curve $\gamma : [0, \ell] \ra M$ joining $x$ to $y$, and for every $u \in C^1(M)$, integrating the inequality $\di u (\gamma') \le F^*(\di u)F(\gamma') = F(\nabla u) \le \|F(\nabla u)\|_\infty$ on $[0,\ell]$ we infer
	\[
	u(y) = u(x) + \int_0^\ell \di u(\gamma'(t))\di t \le u(x) + \|F(\nabla u)\|_\infty\ell.
	\]
Choosing $\ell$ such that $\ell = \di(x,y) + j^{-1}$, and letting $j \ra \infty$, we deduce $u(y) \le u(x) + \|F(\nabla u)\|_\infty \di(x,y)$. The case $u \in \lip(M)$ follows by approximation.\\[0.2cm]
${\bf 6) \Rightarrow 1)}$.\\
Fix a compact set $K \subset M$ and a sequence of functions $\bar u_j \in \lip_c(M)$ with $\lip(\bar u_j,M) \ra 0$ and $\bar u_j \le -1$ on $K$. Up to replacing $\bar u_j$ with $\max\{\bar u_j,1\}$, we can assume that $-1 \le \bar u_j \le 0$ on $M$ and $\bar u_j = -1$ on $K$. By Ascoli-Arzel\'a theorem, up to subsequences, $\bar u_j \ra \bar u_\infty$ locally uniformly, for some $\bar u_\infty \in \lip(M)$, and from $\lip(\bar u_\infty,M) \le \liminf_j \lip(\bar u_j, M) = 0$ we deduce that $\bar u_\infty = -1$ on $M$. Now, the proof concludes exactly as the one for $3) \Rightarrow 1)$, up to defining $u_j = \bar u_j + 1$. \\[0.2cm] 
${\bf 1) \Rightarrow 8)}$.\\
By contradiction, if $u$ is a subsolution of 
	\[
	G(u)-F(\nabla u) = 0 \quad \text{on } \, \Omega,
	\]
	and $\sup_{\partial \Omega} u < \sup_\Omega u < \infty$, the function 
	\[
	v(x) = \int^{u(x)}_{0} \frac{\di s}{G(s)}
	\]
would be a subsolution of 
	\[
	\left\{ \begin{array}{ll}
	1- F(\nabla v) = 0  \quad \text{on } \, \Omega, \\[0.2cm]
	v_0 \doteq \sup_{\partial \Omega} v < \sup_\Omega v < \infty.
	\end{array}\right.
	\]
Let $\varrho^+$ be the forward distance from a fixed origin, and set $w_\eps \doteq v_0 + \eps \varrho^+$ for $\eps \in (0,1)$. We claim that $v \le w_\eps$ on $\Omega$. Once this is shown, letting $\eps \ra 0$ we would have $v \le v_0$, which is absurd. Assume therefore that $U \doteq \{v > w_\eps\}$ be non-empty. Since $M$ is forward complete, $w_\eps(x) \ra +\infty$ as $x$ diverges, thus $U$ is relatively compact and does not meet $\partial \Omega$. Pick a point $x \in U$ where $u-w_\eps$ attains a (positive) maximum value $c$, and let $\varrho_\eps^+ \succ_x \varrho^+$ be a barrier at $x$. Then, $\phi \doteq v_0 + c + \eps \varrho_\eps^+$ would touch $v$ from above at $x$, that would imply $0 \ge 1 - F(\nabla \phi) = 1- \eps F(\nabla \varrho^+_\eps) = 1-\eps$, contradiction.\\[0.2cm]
%
%
%
%
${\bf 8) \Rightarrow 1)}$.\\
Let $0< G \in C(\R)$ such that $8)$ holds. We define
	\[
	\hat G(t) = \min_{[0,t]} G(s).
	\]
Then, $\hat G$ is non-increasing and positive on $\R^+$, and from $\hat G \le G$ on $\R^+$ we deduce that $8)$ still holds, with $\hat G$ replacing $G$, provided that $u$ be non-negative on $\Omega$. Summarizing, we can assume that $G$ is non-increasing on $\R^+$, up to restricting the validity of $5)$ to nonnegative $u$. Fix a small, regular forward ball $\mathcal{B} = \mathcal{B}^+_{x_0}(3\eps)$, denote with $\widetilde{\nabla}$ the gradient induced by the dual Finsler structure $\widetilde{F}$, and define
	\[
	\widetilde{G}(t) = G(-t).
	\]
We aim to prove the existence of a function satisfying
	\begin{equation}\label{proprie_w_eik}
	\left\{\begin{array}{l}
	w \in C(M \backslash \mathcal{B}), \quad w \le 0, \\[0.2cm]
	w(x) \to -\infty  \ \text{ as $x$ diverges }, \\[0.2cm]
	\text{$w$ is a viscosity subsolution of } \, \widetilde{F}(\widetilde{\nabla} w) - \widetilde{G}(w) =0 \, \text{ on } \, M \backslash \overline{\mathcal{B}}. 
	\end{array}\right.
	\end{equation}
Here, the writing $w(x) \to -\infty$ as $x$ diverges means that $w$ has compact upper level sets in $M \backslash \mathcal{B}$. Once this is shown, we conclude that $M$ must be forward complete as follows: set
	\[
	h(x)\doteq \int_0^{w(x)} \frac{\di s}{\widetilde{G}(s)},
	\]
then $h \le 0$ and, since $G$ is non-increasing, $h(x) \ra -\infty$ as $x$ diverges. Furthermore, $h$ is a viscosity subsolution of $\widetilde{F}(\widetilde{\nabla} h) - 1=0$ on $M \backslash \mathcal{B}$. By Proposition 4.3 in \cite{camilli_siconolfi}, $h$ is Lipschitz continuous in the pseudo-distance $\tilde{\di}$ induced by $\widetilde{F}$:
	\[
	h(y) \le h(x) + L \tilde{\di}(x,y) = h(x) + L \di(y,x) \qquad \forall \, x,y \in M \backslash \mathcal{B}.
	\]
for some constant $L>0$. Take a maximal, forward geodesic $\gamma: [0,T) \ra M$ issuing from $x_0$, and suppose by contradiction that $T< +\infty$. Define $v(t) \doteq h(\gamma(t))$ on $[3\eps, T)$. By assumption, $v(t) \ra -\infty$ as $t \ra T^-$. On the other hand, 
	\[
	v(t) \ge v(3\eps) - L\di(\gamma(3\eps), \gamma(t)) \ge v(3\eps) + L(3\eps-t),
	\]	
contradiction.\par
The idea to prove the existence of $w$ is inspired by \cite{maripessoa,marivaltorta}. Let $\Omega_j \uparrow M$ be an increasing exhaustion of $M$ by means of relatively compact open sets with smooth boundary, satisfying $\overline{\mathcal{B}} \Subset \Omega_1$. We will construct a sequence of functions $\{w_j\}$ such that
	\begin{equation}\label{proprie_wj_eik}
	\left\{\begin{array}{l}
	w_j \in C(M \backslash \mathcal{B}), \qquad w_j \le 0 \ \text{ on } \, M \backslash \mathcal{B}, \ w_j > -1/2 \quad \text{on } \, \partial \mathcal{B} \\[0.2cm]
	w_{j+1} \le w_j \qquad \text{on } \, M \backslash \overline{\mathcal{B}}, \\[0.2cm]
	\|w_{j+1} - w_j\|_{L^\infty(\Omega_j\backslash \mathcal{B})} < 2^{-j}, \\[0.2cm]
	w_j \equiv -j \qquad \text{outside of some compact set $C_j$}, \\[0.2cm]
	\text{$w_j$ is a viscosity subsolution of } \, \widetilde{F}(\widetilde{\nabla} w_j) - \widetilde{G}(w_j) = 0 \ \text{ on } \, M \backslash \overline{\mathcal{B}}. 
	\end{array}\right.
	\end{equation}
Once this is done, $\{w_j\}$ locally uniformly converges to some $w \in C(M \backslash \mathcal{B})$, and from $w \le w_j = -j$ outside of $C_j$ we deduce that $w(x) \ra -\infty$ as $x$ diverges. By stability of viscosity solutions, $w$ satisfies all of the properties in \eqref{proprie_w_eik}. Fix a sequence $\{\lambda_j\} \subset C(M)$ such that
	\[
	\begin{array}{l}
	0 \ge \lambda_j \ge -1, \quad \lambda_j = 0 \ \ \text{ on } \, \mathcal{B}, \quad \lambda_j \equiv -1 \ \ \text{ on } \, M \backslash \Omega_j, \\[0.2cm]
	\lambda_{j+1} \ge \lambda_j \ \ \text{ on } \, M, \quad \text{and} \quad \lambda_j \uparrow 0 \quad \text{locally uniformly on } \, M.
	\end{array}
	\]
We proceed inductively. Set $w_0 \equiv 0$ and define the forward balls $\mathcal{B}_1 = \mathcal{B}_{x_0}^+(\eps)$ and  $\mathcal{B}_2 = \mathcal{B}_{x_0}^+(2\eps)$, so that $\mathcal{B}_1 \Subset \mathcal{B}_2 \Subset \mathcal{B}$. Fix a smooth cutoff $\psi \in C^\infty_c(\mathcal{B})$ satisfying $\psi \equiv 1$ on $\mathcal{B}_2$, and denote with $\varrho^+(x) = \di(x_0,x)$ the forward distance to $x_0$ in $M$. For each $j$, define the Lipschitz function
	\[
	s_j(x) = j \cdot \max \left\{ \frac{\eps - \varrho^+}{\eps}, -1\right\}.
	\]	
Since $-\varrho^+(x)$ coincides with the signed backward distance to $x_0$ in $\widetilde{F}$, applying Corollary \ref{cor_radial_inf} to $(M, \widetilde{F})$ we deduce that $s_j$ is a viscosity subsolution of 
	\[
	\widetilde{F}(\widetilde{\nabla} s_j) - \widetilde{G}(s_j) - \frac{j}{\eps}\psi(x) = 0 \qquad \text{on } \, M \backslash \overline{\mathcal{B}_1}.	
	\]
We will construct $\{w_j\}$ in such a way that $w_j \ge s_j$ on $M$, in particular, $w_j = 0$ on $\partial \mathcal{B}_1$. This is trivial for $w_0$. Having fixed $w=w_j$, we define the obstacles $g_i = w + \lambda_i$ for $i>j$. For each $i$, we consider the following Perron class:
	\[
	\mathscr{F}[g_i] = \left\{ v \in C(\overline{\Omega_i \backslash \mathcal{B}_1}) \ : \ \begin{array}{l}
	v \le g_i, \ \text{ and $v$ is a viscosity subsolution of} \\[0.2cm]
	\widetilde{F}(\widetilde{\nabla} v) - \widetilde{G}(v) - \frac{j+1}{\eps} \psi(x) = 0 \ \text{ on } \, \Omega_i\backslash \overline{\mathcal{B}_1}
	\end{array}
	\right\},
	\]
and the envelope
	\[
	u_i(x) \doteq \sup \Big\{ v(x) : v \in \mathscr{F}[g_i]\Big\},
	\]
namely the solution of the obstacle problem on $\Omega_i \backslash \mathcal{B}_1$ with obstacle $g_i$. Perron class is non-empty, since it contains the constant $-j-1$. Furthermore, since $\lambda_i = 0$ on $\mathcal{B}$, we have $g_i \ge s_j + \lambda_i \ge s_{j+1}$, and from $\psi \equiv 0$ outside of $\mathcal{B}$ we deduce $s_{j+1} \in \mathscr{F}[g_i]$. This and $0 \ge u_i \ge s_{j+1}$ guarantee that $u_i=0$ on $\partial \mathcal{B}_1$. For $v \in \mathscr{F}[g_i]$, the function  
	\[
	h_v = \int_0^{v(x)} \frac{\di s}{\widetilde{G}(s)}
	\]
is a subsolution of 
	\[
	\widetilde{F}(\widetilde{\nabla} h_v) - 1 - \frac{j+1}{\eps} \cdot \frac{1}{\inf_{[-j-1,0]} \widetilde{G}} = 0
	\] 
on $M \backslash \mathcal{B}_1$. Proposition 4.3 in \cite{camilli_siconolfi} guarantees that $h_v$ is Lipschitz with constant $L_j$ only depending on $j$. Thus, functions $v \in \mathcal{F}[g_i]$ with $v \ge -j-1$ are equiLipschitz, in particular $u_i \in \lip(\Omega_i \backslash \mathcal{B}_1)$. By stability, $u_i$ is still a viscosity subsolution of
	\[
	\widetilde{F}(\widetilde{\nabla} u_i) - \widetilde{G}(u_i) - \frac{j+1}{\eps} \psi(x) = 0	\qquad \text{on } \, \Omega_i \backslash \mathcal{B}_1,
	\]
and in fact it is also a viscosity supersolution of the same equation on the open set $\{ u_i < g_i\}$. For $i$ large enough to satisfy $C_j \Subset \Omega_i$,   
	\[
	-j-1 \le u_i \le g_i = -j-1 \qquad \text{on } \, \Omega_i \backslash C_j.
	\]	
Thus, $u_i = -j-1$ in a neighbourhood of $\partial \Omega_i$. Extending $u_i$ with $-j-1$ outside of $\Omega_i$ produces a subsolution (still named $u_i$) of 
	\[
	\widetilde{F}(\widetilde{\nabla} u_i) - \widetilde{G}(u_i) = 0 \quad \text{on } \, M \backslash \mathcal{B}.
	\]
Clearly, by construction $u_i \in \mathscr{F}[g_{i'}]$ for every $i' > i$. Therefore, the sequence $\{u_i\}$ is monotone increasing and equiLipschitz, and hence converges to a limit function $u \in \lip(M \backslash \mathcal{B}_1)$ that vanishes on $\partial \mathcal{B}_1$. \par
\noindent{\bf Claim:} $u \equiv w$.\\
We first prove that $u \ge -j$ on $M \backslash \mathcal{B}_1$. We proceed by contradiction, assuming that the open set $U = \{u < -j- \delta\}$ be non-empty for some $\delta>0$. Note that $U$ might intersect $\mathcal{B}$, where the term $\psi$ does not vanish, but $\overline{U} \subset M \backslash \overline{B_1}$ since $u = 0$ on $\partial B_1$. Choose $i_0$ large enough that 
	\[
	U_{i_0} = \{ u < g_{i_0} - \delta \} \neq \emptyset. 
	\]
This is possible since $g_i \uparrow w$ locally uniformly. By monotonicity, $u_i < g_i - \delta$ on $U_{i_0}$ for every $i \ge i_0$, meaning that the solution of the obstacle problem $u_i$ detaches from the obstacle $g_i$ on $U_{i_0}$. Therefore, $u_i$ is also a supersolution of  
	\[
	\widetilde{F}(\widetilde{\nabla} u_i) - \widetilde{G}(u_i) - \frac{j+1}{\eps} \psi(x) = 0 \qquad \text{on } \, U_{i_0}
	\]
and, by stability, so is $u$ on $U_{i_0}$. From $U = \bigcup_{i_0} U_{i_0}$, we deduce that $u$ is a supersolution of 
	\[
	\widetilde{F}(\widetilde{\nabla} u) - \widetilde{G}(u) - \frac{j+1}{\eps} \psi(x) = 0 \qquad \text{on } \, U,
	\]
and, as a consequence, a supersolution of $\widetilde{F}(\widetilde{\nabla} u) - \widetilde{G}(u) = 0$ on $U$. At this stage, we use property $8)$ to $v : = -u$, that is a subsolution of 
	\[
	G(\nabla v) - F(\nabla v) = 0 \qquad \text{on } \, U,
	\]
to deduce that $\sup_U v = \sup_{\partial U} v$, contradicting the very definition of $U$ and proving the claim. Next, fix $i_0$ with $C_j \Subset \Omega_{i_0}$, and $\delta > 0$ small. From $u \ge -j$ and $w= -j$ on $M \backslash C_j$, we deduce that $u_i \uparrow -j$ uniformly on $\partial \Omega_{i_0}$. Choose $i>>i_0$ such that  
	\[
	u_i > -j - \frac{\delta}{2} \qquad \text{on } \, \partial \Omega_{i_0}. 
	\]
It follows that the function
	\[
	v_i = \left\{ \begin{array}{ll}
	\max\{ w-\delta, u_i \} & \quad \text{on } \, \Omega_{i_0}, \\[0.2cm]
	u_i & \quad \text{on } \, \Omega_i \backslash \Omega_{i_0}
	\end{array}\right.
	\]
belongs to $\mathscr{F}[g_i]$, and therefore $u_i \ge v_i$ on $\Omega_i$ by the maximality of $u_i$. In particular, $u_i \ge w-\delta$ holds on $\Omega_{i_0}\backslash \mathcal{B}$ for $i$ large enough. By the arbitrariness of $i_0$ and $\delta$, this proves that $u_i \uparrow w$ locally uniformly on $M \backslash \mathcal{B}$, hence $u \equiv w$.\\[0.2cm]
	To conclude, from $u_i \uparrow u \equiv w$ locally uniformly we can choose $i$ large enough such that, setting $w_{j+1} \doteq u_i$, $w_{j+1}$ satisfies all of the requirements in \eqref{proprie_wj_eik}. \\[0.2cm]
${\bf 1) \Rightarrow 9)}$.\\
As stated in the introduction, the proof of Ekeland principle given in \cite[p.444]{ekeland}, see also \cite[p.85]{amr}, does not use the symmetry of $\di$, and can therefore be repeated verbatim.\\[0.2cm]
${\bf 9) \Rightarrow 1)}$.\\
The argument is due to \cite{weston,sullivan}, and we reproduce it here for the sake of completeness. Let $\{x_j\}$ be a forward Cauchy sequence, and define the function
	\[
	f \ : \ M \ra [-\infty,0], \qquad f(x) = - \limsup_j \di(x,x_j).
	\]
The goal is to prove the existence of $\bar x \in M$ such that $f(\bar x) = 0$.	Fix $\eps > 0$ and $j_\eps$ guaranteed by the Cauchy condition. From
	\[
	\di(x_{j_\eps}, x_j) < \eps \qquad \forall \, j > j_\eps
	\]	 
we deduce $f(x_{j_\eps}) \ge -\eps$, hence $\sup_M f = 0$. Furthermore, the triangle inequality implies $f(y) \le f(x) + \di(x,y)$, hence $f$ is locally Lipschitz and finite everywhere. Fix $\delta \in (0,1)$ and, by $9)$, let $\bar x$ satisfy 
	\[
	f(\bar x) \ge -\delta, \qquad f(y) \le f(\bar x) + \delta \di(\bar x, y).
	\]
Choosing $y = x_j$ for $j > j_\eps$ we deduce
	\[
	-\eps \le f(\bar x) + \delta \di(\bar x, x_j). 	
	\]
Thus, letting $j \ra \infty$ along a sequence realizing $f(\bar x)$, and then letting $\eps \ra 0$, we get
	\[
	0 \le f(\bar x) - \delta f(\bar x) = (1-\delta)f(\bar x) \le 0, 
	\]
and we conclude $f(\bar x) = 0$.

\section{Appendix I: A homogeneous comparison} \label{sec_appe_I}

\begin{theorem}
Let $\Omega \Subset M$ and assume that $u \in \USC(\overline\Omega), v \in \LSC(\overline\Omega)$ are bounded on $\Omega$ and satisfy
	\[
	\Delta_\infty^N u \ge 0, \quad \text{and} \quad \Delta_\infty^N v \le 0 \quad  \text{in the viscosity sense on} \ \Omega.
	\]
Then,
	\[ 
	\max_{\overline{\Omega}}(u-v) = \max_{\partial \Omega}(u-v).
	\]
\end{theorem}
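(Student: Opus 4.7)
The plan is to derive a contradiction from assuming $m := \max_{\overline\Omega}(u-v) > M_\partial := \max_{\partial\Omega}(u-v)$. Under this hypothesis the closed set
\[
K := \{x \in \overline\Omega \ : \ u(x) - v(x) = m\}
\]
is non-empty, contained in $\Omega$, and disjoint from $\partial\Omega$; pick any $x_0 \in K$. I would follow Armstrong and Smart \cite{armstrong_smart}, who bypassed Jensen's doubling-of-variables argument by relying only on comparison with cones. Since Theorem \ref{comp_g-cones} (applied with $g \equiv 0$) already gives us comparison with forward linear cones from above for $u$ and with backward linear cones from below for $v$, essentially all the machinery is available; only the asymmetry of the Finsler distance requires care.

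The first step is the standard deduction of slope monotonicity from cone comparison. For every $x \in \Omega$, the forward slope
\[
\Lambda^+_r(u,x) := \sup_{y \in \overline{\mathcal{B}^+_x(r)}} \frac{u(y)-u(x)}{r}
\]
is non-decreasing in $r \in (0,\di^+(x))$, and the backward slope
\[
\Lambda^-_r(v,x) := \sup_{y \in \overline{\mathcal{B}^-_x(r)}} \frac{v(x)-v(y)}{r}
\]
is non-decreasing in $r \in (0,\di^-(x))$. These two monotone slope functions, one defined via forward balls and one via backward balls, are the key analytic objects driving the proof.

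The main body of the argument propagates the maximum of $u - v$ across $K$: at $x_0 \in K$, the inequality $u(y) - v(y) \le m$ for $y \in \overline\Omega$ rearranges to
\[
u(y) - u(x_0) \le v(y) - v(x_0) \qquad \forall\, y \in \overline\Omega.
\]
Choosing $r_0 > 0$ small enough that $\overline{\mathcal{B}^+_{x_0}(r_0)} \Subset \Omega$ and picking $y_0 \in \mathcal{S}^+_{x_0}(r_0)$ realizing the forward slope $\Lambda^+_{r_0}(u,x_0)$, the displayed inequality forces $v(y_0) - v(x_0) \ge r_0\,\Lambda^+_{r_0}(u,x_0)$. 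Feeding this lower bound on $v(y_0)$ into the backward-cone comparison for $v$, and using the local bilipschitz equivalence \eqref{reversible_metric_ineq} to translate between the forward ball $\mathcal{B}^+_{x_0}(r_0)$ and a backward ball centred at $y_0$, one concludes that the equality $u(y_0) - v(y_0) = m$ in fact persists at $y_0$, so $y_0 \in K$. Setting $x_1 := y_0$ and iterating produces a sequence $\{x_j\} \subset K$ with $\di(x_j, x_{j+1}) \ge c_0 > 0$, all proceeding in the forward direction. This is impossible: $\overline\Omega$ is compact and any forward-cumulative divergent sequence must exit it, contradicting $K \cap \partial\Omega = \emptyset$.

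The main obstacle is precisely this final step: the natural slope for $u$ lives in the forward Finsler geometry, while the natural slope for $v$ lives in the backward (dual) one, and the two cannot be directly compared. The reconciling tool is the local bilipschitz equivalence \eqref{reversible_metric_ineq}, whose constant $\alpha$ tends to $1$ as the neighbourhood shrinks; this allows the quantitative estimates at each iteration step to be perturbed only by a multiplicative factor arbitrarily close to unity, which is enough for the Armstrong--Smart propagation mechanism to go through unchanged. Apart from this careful bookkeeping of forward versus backward distances, the Euclidean proof requires no essential alteration.
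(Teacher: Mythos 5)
The central claim of your iteration step is that the point $y_0 \in \mathcal{S}^+_{x_0}(r_0)$ realizing the forward slope of $u$ at $x_0$ again satisfies $u(y_0)-v(y_0) = m$, so that $y_0 \in K$. This is not established, and the tools you invoke cannot establish it. From $u - v \le m$ with equality at $x_0$ you correctly derive the lower bound $v(y_0) - v(x_0) \ge r_0\,\Lambda^+_{r_0}(u,x_0)$, but to conclude $y_0 \in K$ you need the matching \emph{upper} bound $v(y_0) - v(x_0) \le r_0\,\Lambda^+_{r_0}(u,x_0)$. Comparison with backward $g$-cones \emph{from below} (the only cone comparison available for the supersolution $v$) only gives lower bounds on $v$, never upper bounds, so ``feeding the lower bound on $v(y_0)$ into the backward-cone comparison'' produces no constraint in the direction you need. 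The phrase ``one concludes that the equality persists'' is precisely the missing proof, and the appeal to the bilipschitz constant $\alpha\to 1$ does not fill it: that estimate compares forward and backward distances, not the oppositely-signed slope bounds on $u$ and $v$.

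You cite Armstrong--Smart as your model, but the argument you describe is not theirs, and the difference is what makes theirs work. Their propagation runs on the $\varepsilon$-regularized functions -- in the Finsler setting the paper takes $u^\varepsilon(x) = \max_{\overline{\mathcal{B}^+_x(\varepsilon)}} u$ and $v_\varepsilon(y) = \min_{\overline{\mathcal{B}^-_y(\varepsilon)}} v$ -- which, thanks to cone comparison, satisfy a two-sided finite-difference inequality $S^-_\varepsilon u^\varepsilon \le S^+_\varepsilon u^\varepsilon$ and $S^-_\varepsilon v_\varepsilon \ge S^+_\varepsilon v_\varepsilon$. It is this \emph{two-sided} discrete inequality that lets the maximum of $u^\varepsilon - v_\varepsilon$ be pushed a fixed step $\varepsilon$ toward the boundary (a finite process, not an infinite sequence inside $K$), after which one sends $\varepsilon\to 0$. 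Working directly with $u$ and $v$ as you propose, no such two-sided estimate is available at a single point, and the propagation mechanism collapses. To repair the proof you should regularize as in the paper's Appendix I, derive the finite-difference inequality \eqref{finite_dif_ineq}, and adapt Armstrong--Smart's Lemma 4 on the nested domains $\Omega^+_{\alpha\varepsilon} \supset \Omega^+_{2\alpha\varepsilon}$; the constant $\alpha$ from \eqref{reversible_metric_ineq} enters there in a precise way, not as a small perturbation.
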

\begin{proof}[Proof: sketch] Since the Finsler structure is non-symmetric, we need to adapt some notation from \cite{armstrong_smart}. First of all, by a compactness argument, we fix 
$\alpha >1$ satisfying \eqref{reversible_metric_ineq} on the whole of $\Omega$. For any $\varepsilon >0$ and $\Omega \Subset M$ let us denote 
$$\Omega^+_\varepsilon = \{ x \in \Omega : \overline{\mathcal{B}^+_{x}}(\varepsilon) \subset \Omega\}, \quad \text{and} \quad \Omega^-_\varepsilon = \{ x \in \Omega : \overline{\mathcal{B}^-_{x}}(\varepsilon) \subset \Omega\}.$$
We set $\Omega_\varepsilon \doteq \Omega^-_\varepsilon \cap \Omega^+_\varepsilon$. Up to reducing $\varepsilon$, we will assume that ${B_{x}^+(2 \eps)}$ and ${B_{x}^-(2\eps)}$ are relatively compact for all $x \in \Omega$.

For $x \in \Omega^+_\varepsilon$ and $y \in \Omega^-_\varepsilon$, define
	\[ 
	u^\varepsilon(x) \doteq \max_{\overline{\mathcal{B}^+_x}(\varepsilon)} u \qquad \text{and} \qquad  v_\varepsilon(y) \doteq \min_{\overline{\mathcal{B}^-_{y}}(\varepsilon)} v.
	\]
As in \cite{armstrong_smart}, applying Corollary \ref{thm_cone_comp} we can prove that $u^{\varepsilon}$ and $v_{\varepsilon}$ are solutions of the following finite difference inequalities 
\begin{eqnarray}\label{finite_dif_ineq}
S^{-}_\varepsilon u^{\varepsilon} (x) - S^+_\varepsilon u^{\varepsilon} (x) \ \leq \ 0 \ \leq \ S^{-}_\varepsilon v_{\varepsilon} (x) - S^+_\varepsilon v_{\varepsilon} (x) 
\end{eqnarray}
for every $x \in \Omega^+_{2\alpha\varepsilon}$,
where $S^\varepsilon$ and $S_\varepsilon$ are defined as follows
	\[ 
	S^+_\varepsilon u(x)  \doteq \max_{y \in \overline{\mathcal{B}^+_{x}}(\varepsilon)} \frac{u(y) - u(x)}{\varepsilon}, \quad \text{and} \quad S^-_\varepsilon u(x) \doteq \max_{y \in \overline{\mathcal{B}^-_{x}}(\varepsilon)} \frac{u(x) - u(y)}{\varepsilon}.
	\]

Now, arguing as in \cite[Lem 4]{armstrong_smart} we can conclude that
$$ \sup_{\Omega^+_{\alpha\varepsilon}} \left(u^{\varepsilon} - v_{\varepsilon}\right) = \sup_{\Omega^+_{\alpha\varepsilon} \backslash \Omega^+_{2\alpha\varepsilon}} \left(u^{\varepsilon} - v_{\varepsilon}\right).$$
The conclusion then follows by passing to the limit $\varepsilon \rightarrow 0$.
\end{proof}

\section{Appendix II: The Dirichlet problem}

Let $\Omega \subset M$ be relatively compact, and let $g : \R \times T^*\overline{\Omega} \ra \R$ with the following properties:
	\begin{equation}\label{ipo_g_app}
	\begin{array}{ll}
	(i) & \quad g\in C(\R \times T^*\overline{\Omega}), \\[0.2cm]
	(ii) & \quad \sup_{(t,v) \in I \times T^*\overline{\Omega}} |g| < \infty \qquad \text{for every compact } \, I \subset \R.
	\end{array}
	\end{equation}
	 
\begin{theorem}\label{existence_dirichlet_linear}
Let $g$ satisfying \eqref{ipo_g_app}, and let $u_1,u_2 \in C(\overline\Omega)$ solving
	\[
	\left\{ \begin{array}{rl}
	\Delta_\infty^N u_1 \ge g(u_1,\di u_1) & \text{on } \ \Omega, \\[0.2cm] 
	\Delta_\infty^N u_2 \le g(u_2,\di u_2) & \text{on } \ \Omega, \\[0.2cm] 
	u_1 \le u_2 & \text{on } \ \overline{\Omega}.
	\end{array}\right.
	\]
Then, for every $\zeta \in C(\partial \Omega)$ with $u_1 \le \zeta \le u_2$, there exists $u \in C(\overline\Omega)$ such that 
	\[
	\left\{ \begin{array}{rl}
	\Delta_\infty^N u = g(u,\di u) &  \text{on } \ \Omega, \\[0.2cm] 
	u_1 \le u \le u_2 &  \text{on } \ \overline{\Omega}, \\[0.2cm]
	u = \zeta &  \text{on } \ \partial \Omega.
	\end{array}\right.
	\]	
\end{theorem}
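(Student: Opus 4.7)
The plan is to run Perron's method in the Finsler viscosity framework. I would define the Perron class
\[
\mathscr{F} \doteq \big\{ w \in \USC(\overline\Omega) \, : \, \Delta_\infty^N w \ge g(w,dw) \text{ on } \Omega, \; u_1 \le w \le u_2 \text{ on } \overline\Omega, \; w \le \zeta \text{ on } \partial\Omega \big\},
\]
which is non-empty since $u_1 \in \mathscr{F}$, and set $u(x) \doteq \sup_{w \in \mathscr{F}} w(x)$. By construction $u_1 \le u \le u_2$ on $\overline\Omega$ and $u^* \le \zeta$ on $\partial\Omega$. The goal is to prove that $u^* = u_* = u$ belongs to $C(\overline\Omega)$, solves $\Delta_\infty^N u = g(u,du)$ on $\Omega$, and equals $\zeta$ on $\partial\Omega$.

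\textbf{Perron bumps.} First I would show that $u^*$ is a viscosity subsolution on $\Omega$ via the classical Perron argument, whose only ingredients are the stability of subsolutions under local suprema (Proposition \ref{elementary_prop_sub}) and a local $C^2$ bump construction; both transfer to the Finsler setting without change. Dually, $u_*$ must be a supersolution: if some test function $\phi \prec_{x_0} u_*$ satisfied $\Delta_\infty^{N,-}\phi(x_0) > g(\phi, d\phi)$, continuity of $g$ would let me bump $\phi$ upwards by a small $C^2$ function to produce an admissible competitor in $\mathscr{F}$ strictly exceeding $u$ near $x_0$, contradicting maximality. The bump can be kept under $u_2$ because either $u_*(x_0) < u_2(x_0)$, giving room, or $u_2$ itself already dominates locally, in which case it serves as the majorant.

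\textbf{Boundary data and comparison.} For the boundary values, at each $x_0 \in \partial\Omega$ and each $\varepsilon > 0$ I would construct an upper barrier $\Psi^+_{x_0,\varepsilon} \in C(\overline\Omega)$, a supersolution of the equation with $\Psi^+_{x_0,\varepsilon}(x_0) = \zeta(x_0)+\varepsilon$ and $\Psi^+_{x_0,\varepsilon} \ge \max\{\zeta, u_1\}$ elsewhere, by sliding a forward $g$-cone (Theorem \ref{comp_g-cones}) of sufficiently large slope $b$ centered at a point $z \in M\setminus\overline\Omega$ close to $x_0$. Assumption (ii) in \eqref{ipo_g_app} bounds $g$ uniformly on $[\inf u_1,\sup u_2] \times T^*\overline\Omega$, which keeps $b$ and the resulting cone uniformly controlled, and the continuity of $\zeta$ at $x_0$ fixes the correct height. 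A symmetric lower barrier $\Psi^-_{x_0,\varepsilon}$ is obtained from backward $g$-cones. Comparison between each $w \in \mathscr{F}$ and $\Psi^+_{x_0,\varepsilon}$ on $\overline\Omega$ then forces $u^*(x_0) \le \zeta(x_0)+\varepsilon$, and dually $u_*(x_0) \ge \zeta(x_0)-\varepsilon$; sending $\varepsilon \downarrow 0$ pins the boundary value to $\zeta(x_0)$. A comparison principle for sub- and supersolutions of $\Delta_\infty^N u = g(u,du)$ extending Theorem \ref{compa_inftyLaplacian} to the inhomogeneous setting then yields $u^* \le u_*$ on $\overline\Omega$; combined with the trivial $u_* \le u \le u^*$, this gives the desired $u \in C(\overline\Omega)$ solving the Dirichlet problem.

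\textbf{Main obstacle.} The principal technical difficulty will be the comparison principle used in the previous step. The operator $\Delta_\infty^N$ is discontinuous at points where $du$ vanishes, which forces the use of the semicontinuous relaxations $\Delta_\infty^{N,\pm}$, and the gradient dependence of $g$ precludes a direct reduction to the homogeneous Armstrong--Smart argument of Appendix I. I would handle it by perturbing $u^*$ through a small multiple $\varepsilon \varrho^+$ of the forward distance to a point outside $\overline\Omega$ (regularized at relevant contact points via Calabi's trick, Lemma \ref{lem_calabi}) in order to turn $u^*-\varepsilon\varrho^+$ into a \emph{strict} subsolution, and then adapt the doubling-of-variables scheme to the asymmetric Finsler distance along the lines of the proof of Theorem \ref{comp_g-cones}; Assumption (ii) provides the uniform bound on $g$ needed to absorb the resulting error terms in the penalty. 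The remaining ingredients (Perron bump lemma, barrier construction) are routine once this comparison is in place.
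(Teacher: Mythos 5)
Your plan correctly identifies Perron's method and the boundary barriers via forward/backward $g$-cones, and your ``Perron bumps'' paragraph matches the paper's argument that the envelope is a solution in the interior. However, the route you take to continuity of the envelope has a genuine gap that the paper deliberately avoids. You set up $u^*$ and $u_*$ and then invoke a comparison principle for sub- and supersolutions of $\Delta_\infty^N u = g(u,\di u)$ to conclude $u^* \le u_*$. Under the stated hypotheses \eqref{ipo_g_app} the function $g$ is only assumed continuous and locally bounded -- there is no monotonicity in the $u$-variable, and $g$ also depends on the gradient -- so such a comparison principle is simply not available, and in fact generically false (already in the Euclidean case, inhomogeneous $\infty$-Laplace equations with right-hand sides depending on $u$ and $\di u$ without a sign or monotonicity condition need not satisfy comparison). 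The perturbation-by-$\eps\varrho^+$ and doubling-of-variables scheme you sketch does not repair this, because the obstruction is structural in $g$, not in the degeneracy of the operator at critical points. The paper explicitly remarks after the theorem statement that ``the above existence result does not need any comparison theorem,'' which is precisely the point.

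The paper's workaround is worth internalizing: because any $v$ in the Perron class is sandwiched between $u_1$ and $u_2$, one has a uniform two-sided bound $-c \le \Delta_\infty^N v \le c$ with $c > \max_{I\times T^*\overline\Omega}|g|$, where $I = [\min_{\overline\Omega} u_1,\max_{\overline\Omega} u_2]$. Corollary~\ref{thm_cone_comp} (comparison with quadratic cones, which requires only the \emph{homogeneous} comparison of Theorem~\ref{compa_inftyLaplacian}) then gives a uniform local Lipschitz estimate for the whole class $\mathscr{P}$. The Perron envelope is therefore automatically in $\lip_{\loc}(\Omega)$; continuity is free, $u^*=u_*=u$ is never in question, and stability (Proposition~\ref{elementary_prop_sub}) gives the subsolution property while the usual bump handles the supersolution side. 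Your boundary-barrier step is essentially correct, but note the paper also uses only one-sided pointwise comparison of $C^2$ quadratic cones against individual $v \in \mathscr{P}$ at a contact maximum, not a general inhomogeneous comparison principle. In short: replace your appeal to comparison for $\Delta_\infty^N u = g(u,\di u)$ with the equi-Lipschitz estimate coming from the uniform bound on $\Delta_\infty^N v$, and the rest of your outline goes through.
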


\begin{remark}
\emph{Note that the above existence result does not need any comparison theorem.
}
\end{remark}

\begin{proof}
We will employ the Perron method. Fix $I = [ \min_{\overline{\Omega}} u_1, \max_{\overline\Omega}u_2]$ and choose $c \in \R^+$ such that 
	\begin{equation}\label{cond_c_perron}
	c > \max_{T\overline\Omega \times I} |g|.
	\end{equation}
Consider the Perron class
	\[
	\mathscr{P} = \Big\{ v \in C(\overline\Omega) \ : \ u_1 \le v \le u_2, \ \Delta_\infty^N v \ge g(v,\di v), \ v \le \zeta \ \text{ on } \, \partial \Omega\Big\},
	\]
and the Perron envelope $u = \sup\{ v : v \in \mathscr{P}\}$ on $\Omega$. By \eqref{cond_c_perron}, $-c \le \Delta_\infty^N v \le c$ for every $v \in \mathscr{P}$. Because of Corollary \ref{thm_cone_comp}, $\mathscr{P}$ is uniformly locally Lipschitz continuous, hence $u \in \lip_\loc(\Omega)$. Given $x \in \partial \Omega$ and $\delta>0$, let $\eps>0$ small enough that the signed distance $\varrho^-(y) = -\di(y,x)$ is smooth on $\mathcal{B}_x^-(\eps)\backslash \{x\}$ and that 
	\[
	\begin{array}{l}
	u_2 > \zeta(x) - \delta \quad \text{on } \, \overline{B_{x}^-(\eps) \cap \Omega}, \qquad \zeta > \zeta(x) - \delta \quad \text{on } \, \overline{B_{x}^-(\eps) \cap \partial \Omega}, \\[0.2cm]
	u_1 < \zeta(x) + \delta \quad \text{on } \, \overline{B_{x}^+(\eps) \cap \Omega}, \qquad \zeta < \zeta(x) + \delta \quad \text{on } \, \overline{B_{x}^+(\eps) \cap \partial \Omega}.
	\end{array}
	\]
	Set $\zeta^-_\delta(x) \doteq \zeta(x) - \delta$, and let $b >> 1$ large enough in such a way that the backward quadratic cone
	\[
	C^-_{b,x}(y) \doteq \zeta_\delta(x) -(b + R_b(\zeta^-_\delta(x)))\di(y,x) + \frac{c}{2} \di(y,x)^2, 
	\] 
defined on $\mathcal{B}_x^-(\eps)$, satisfies $C^-_{b,x} < u_1$ on $ \mathcal{S}_x^-(\eps) \cap \overline\Omega$. By Corollary \ref{thm_cone_comp} we then have 
$C_{b,x}^- \le u_2$ on $\overline{\mathcal{B}_x^-(\eps) \cap \Omega}$, and 
	\[
	\Delta_\infty^N C_{b,x}^- \ge c\ge g(C_{b,x}^-, \di C_{b,x}^-) \qquad \text{on } \, \mathcal{B}_x^-(\eps) \cap \{ C_{b,x}^- > u_1\}.
	\]
It follows that 	
	\[
	w : = \left\{ \begin{array}{ll}
	\max\{ C_{b,x}^- , u_1\} &  \text{on } \, \overline{\mathcal{B}_x^-(\eps) \cap \Omega},\\[0.2cm]
	u_1 &  \text{otherwise} 
	\end{array}\right.	
	\]
lies in $\mathscr{P}$ and therefore
	\begin{equation}\label{liminf_u}
	\liminf_{y \ra x} u(y) \ge \liminf_{y \ra x} w(y) \ge \liminf_{y \ra x} C_{b,x}^-(y) = \zeta(x)-\delta.
	\end{equation}
Similarly, setting $\zeta^+_\delta(x) = \zeta(x) + \delta$, we consider the forward quadratic cone
 	\[ 
	C^+_{b,x}(x) \doteq \zeta_\delta^+(x) + (b + R_b(\zeta_\delta^+(x))) \di(x,y) - \frac{c}{2} \di(x,y)^2 
	\] 
that for large enough $b$ solves
	\[
	\left\{ \begin{array}{rl}
	\Delta_\infty^N C^+_{b,x} \le -c &  \text{on } \, \mathcal{B}_x^+(\eps), \\[0.2cm]
	C^+_{b,x} > u_2 &  \text{on } \, \mathcal{S}_x^+(\eps) \cap \overline\Omega.
	\end{array}\right.
	\]  
We claim that $v < C^+_{b,x}$ on $\mathcal{B}_x^+(\eps)$ for every $v \in \mathscr{P}$. Indeed, this holds by construction on $\mathcal{S}_x^+(\eps) \cap \overline\Omega$, while on $\partial \Omega \cap \overline{\mathcal{B}_x^+(\eps)}$ we have
	\[
	v \le \zeta < \zeta^+_\delta(x) \le C^+_{b,x},
	\]
thus $v < C^+_{b,x}$ on $\partial( \mathcal{B}_x^+(\eps) \cap \Omega)$. If $v - C^+_{b,x}$ attains a non-negative maximum $m_0$ at $x_0 \in \mathcal{B}_x^+(\eps) \cap \Omega$, then $C^+_{b,x} + m_0$ is a smooth function that touches $v$ from above and satisfies $\Delta_\infty^N C^+_{b,x}(x_0) \le -c < g(u(x_0), \di u(x_0))$, contradiction. Thus, $v \le C^+_{b,x}$ on $\mathcal{B}^+_x(\eps) \cap \Omega$ and, taking supremum, $u \le C^+_{b,x}$ there. Hence, 
	\[
	\limsup_{y \ra x} u(y) \le \limsup_{y \ra x} C^+_{b,x}(y) = \zeta(x)+\delta,
	\]
thus coupling with \eqref{liminf_u} and letting $\delta \ra 0$ we infer $u \in C(\overline\Omega)$ with $u=\zeta$ on $\partial \Omega$. By the stability of subsolutions with respect to uniform convergence (Proposition \ref{elementary_prop_sub}), $\Delta_\infty^N u \ge g(u,\di u)$ on $\Omega$. We are left to prove that $u$ is also a supersolution. Suppose, by contradiction, that there exist $x_0 \in \Omega$ and $\phi \prec_{x_0} u$ defined in a small, relatively compact neighbourhood $U \Subset \Omega$ of $x_0$ such that $\Delta_\infty^N \phi(x_0) > g(\phi,\di \phi)(x_0)$. If $u(x_0) = u_2(x_0)$, then $\phi \prec_{x_0} u_2$, contradicting the fact that $u_2$ is a supersolution. Therefore, $u(x_0) < u_2(x_0)$. Up to subtracting to $\phi$ a function $\psi \succ_{x_0} 0$ that is positive on $U \backslash \{x_0\}$ and vanishes at $x_0$ at second order, we can assume that $\phi < u$ on $U \backslash \{x_0\}$. By continuity of $\zeta$ and since $\phi$ is smooth, up to shrinking $U$ and choosing $\eps$ small we can satisfy any of the following properties: 
	\[
	\left\{ \begin{array}{ll}
	\phi + \eps < u & \quad \text{on } \, \partial U, \\[0.2cm]
	\phi + \eps \le u_2 & \quad \text{on } \, U, \\[0.2cm]
	\Delta_\infty^N \phi > g( \phi + \eps, \di \phi) & \quad \text{on } \, U.
	\end{array}\right.
	\]
It follows that 
	\[
	\hat u : = \left\{ \begin{array}{ll}
	\max\{ u, \phi + \eps\} & \quad \text{on } \, U, \\[0.2cm]
	u & \quad \text{on } \, \overline\Omega \backslash U
	\end{array}\right.
	\]
	lies in $\mathscr{P}$, and since $\hat u(x_0) > u(x_0)$ this contradicts the definition o $u$.	
\end{proof}

\bibliographystyle{amsplain}

\begin{thebibliography}{99.}


%
\bibitem{ahlforssario} L.V. Ahlfors and L. Sario, \emph{Riemann surfaces}. Princeton mathematical series 26, Princeton Univ. Press, 1960.
%
%
%
\bibitem{amr} L.J. Al\'ias, P. Mastrolia, and M. Rigoli, \emph{Maximum principles and geometric applications.} Springer Monographs in Mathematics. Springer, Cham, 2016. xvii+570 pp.
%
\bibitem{amir} L.J. Al\'ias, J. Miranda, and M. Rigoli, \emph{A new open form of the weak maximum principle and geometric applications.} Comm. Anal. Geom. 24 (2016), no. 1, 1-43.
%
\bibitem{araujo_leitao_teixeira} D.J. Ara\'ujo, R.. Leit\~ao, and E.V. Teixeira, \emph{Infinity Laplacian equation with strong absorptions.} J. Funct. Anal. 270 (2016), no. 6, 2249-2267.


\bibitem{armstrong_smart} S.N. Armstrong and C. Smart, \emph{An easy proof of Jensen's theorem on the uniqueness of infinity harmonic functions.} Calc. Var. Partial Differential Equations 37 (2010), no. 3-4, 381-384. 
%

\bibitem{armstrong_smart_tams} S.N. Armstrong and C. Smart, \emph{A finite difference approach to the infinity Laplace equation and tug-of-war games.} Trans. Amer. Math. Soc. 364 (2012), no. 2, 595-636. 
%

\bibitem{aronsson1} G. Aronsson, \emph{Extension of functions satisfying Lipschitz conditions.} Ark. Mat. 6 (1967), 551-561.

\bibitem{aronsson2} G. Aronsson, \emph{On the partial differential equation $u_{x^2} u_{xx} + 2u_x u_y u_{xy} + u_{y^2} u_{yy} = 0$.} Ark. Mat. 7 (1968) 395-425.

\bibitem{aronsson_crandall_juutinen} G. Aronsson, M.G. Crandall and P. Juutinen, \emph{A tour of the theory of absolutely minimizing functions.} Bull. Amer. Math. Soc. 41, n.4, (2004), 439-505.

%
%




\bibitem{bao_chern_shen} D. Bao, S.-S. Chern and Z. Shen, \emph{An introduction to Riemann-Finsler geometry.} Springer-Verlag New York 2000. p. 431. (Graduate texts in mathematics; 200).

\bibitem{barles_busca} G. Barles and J. Busca, \emph{Existence and comparison results for fully nonlinear degenerate elliptic equations without zeroth-order term.} Comm. Partial Differential Equations 26 (2001), no. 11-12, 2323-2337.  


\bibitem{Bar_Evans_Jensen} E.N. Barron, L.C. Evans and R. Jensen, \emph{The infinity laplacian, Aronsson's
equation and their generalizations}. Trans. Amer. Math. Soc. 360 (2008), 77-101.


\bibitem{barron_jensen_wang} E.N. Barron, R.R. Jensen, and C.Y. Wang, \emph{The Euler Equation and Absolute Minimizers of $L^\infty$ Functionals.} Arch. Rational Mech. Anal. 157 (2001) 255-283.

%
%
%
%
%
%
%
%
\bibitem{bmpr} B. Bianchini, L. Mari, P. Pucci and M. Rigoli, \emph{On the interplay among maximum principles,
compact support principles and Keller-Osserman conditions on manifolds.} Available at
arXiv:1801.02102.
%
%
%
%
%
%
%
%
%
%
%
\bibitem{calabi} E. Calabi, \emph{An extension of E. Hopf's maximum principle with an application to Riemannian geometry.} Duke Math. J. 25 (1958), no. 1, 45-56.

\bibitem{camilli_siconolfi} F. Camilli and A. Siconolfi, \emph{Maximal subsolutions for a class of
degenerate Hamilton-Jacobi problems.} Indiana University Math. J. 48 (1999), 1111-1131.

%

\bibitem{champion_depascale} T. Champion and L. de Pascale, \emph{Principles of comparison with distance functions for absolutely minimizers.} Journal of Convex Analysis, 14 (2007), n.3, 515-541.

%
%
\bibitem{chengyau} S.Y. Cheng and S.T. Yau, \emph{Differential equations on Riemannian manifolds and their geometric applications.} Comm. Pure Appl. Math. 28 (1975), no. 3, 333-354.
%
%
\bibitem{cohnvossen} S. Cohn-Vossen. \emph{Existenz k\"urzester Wege.} Compositio math., Groningen, 3 (1936), 441-452. 

%
\bibitem{cmmr} G. Colombo, M. Magliaro, L. Mari and M. Rigoli, \emph{Bernstein and half-space properties for minimal graphs under Ricci lower bounds.} Available at arXiv:1911.12054.
%
\bibitem{crandall_visit} M.G. Crandall, \emph{A Visit with the $\infty$-Laplace Equation.} Calculus of Variations and Nonlinear Partial Differential Equations. Lecture Notes in Mathematics, vol. 1927, pp. 75-122. Springer, Berlin (2008).
%
\bibitem{crandall_evans_gariepy} M.G. Crandall, L.C. Evans and R.F. Gariepy, \emph{Optimal Lipschitz extensions and the infinity Laplacian.} Calc. Var. P.D.E. 13 (2001), no. 2, 123-139.
%
\bibitem{CIL} M.G. Crandall, H. Ishii and P.L. Lions, \emph{User's guide to viscosity solutions of second-order partial differential equations.} Bull. Am. Math. Soc. 27 (1992), 1-67.
%
%
%
%
%
%
%
\bibitem{ekeland_2} I. Ekeland, \emph{On the variational principle.} J. Math. Anal. Appl. 47 (1974), 324-353.
%
\bibitem{ekeland} I. Ekeland, \emph{Nonconvex minimization problems.} Bull. Amer. Math. Soc. (N.S.) 1 (1979), no. 3, 443-474.
%
\bibitem{evans} L.C. Evans, \emph{Estimates for smooth absolutely minimizing Lipschitz extensions.} Electron. J. Differential Equations 1993, No. 03, approx. 9 pp. (electronic only).

%
%
%
%
%
%
%
%
%
%

\bibitem{transportmass} J. Garc\'ia-Azorero, J.J. Manfredi, I. Peral and J.D. Rossi, \emph{The Neumann problem for the $\infty$-Laplacian and the Monge-Kantorovich mass transfer problem}. Nonlinear Anal. 66 (2007), 349-366.

%
%
%
%
\bibitem{grigoryan} A. Grigor'yan, \emph{Analytic and geometric background of recurrence and non-explosion of the Brownian motion on Riemannian manifolds.} Bull. Amer. Math. Soc. 36 (1999), 135-249.
%
\bibitem{GXY} C.-Y. Guo, C.-L. Chang and D. Yang, \emph{$L^\infty$-variational problems associated to measurable Finsler structures.} Nonlinear Anal. 132 (2016), 126-140.


\bibitem{HL_dir} F.R. Harvey and H.B. Lawson Jr., \emph{Dirichlet Duality and the Nonlinear Dirichlet Problem on Riemannian Manifolds.} J. Differential Geom. 88 (2011), 395-482.
%
%
%
%
%
%
%
%
%
%
%
%
%
%
%
\bibitem{jensen} R. Jensen, \emph{Uniqueness of Lipschitz extensions: minimizing the sup norm of the gradient.} Arch. Rational Mech. Anal. 123 (1993), no. 1, 51-74.
%
%
%
\bibitem{juutinen_2} P. Juutinen, \emph{Minimization problems for Lipschitz functions via viscosity solutions}. Ann. Acad. Sci. Fenn. Math. Diss. No. 115 (1998), 53 pp.
%
%
%
%
%
%
%
%
%
%
%
%
\bibitem{lindqvistmanfredi} P. Lindqvist and J.J. Manfredi, \emph{The Harnack inequality for $\infty$-harmonic functions.} Electron. J. Differential Equations 1995, no. 04, approx. 5 pp. (electronic). 
%
\bibitem{lindqvistmanfredi_2} P. Lindqvist and J.J. Manfredi, \emph{Note on $\infty$-superharmonic functions.} Rev. Mat. Univ. Complut. Madrid 10 (1997), no. 2, 471-480.
%
\bibitem{lu_wang} G. Lu and P. Wang, \emph{Inhomogeneous infinity Laplace equation.} Advances in Mathematics 217 (2008) 1838-1868.

\bibitem{lu_wang_cpde} G. Lu and P. Wang, \emph{A PDE perspective of the normalized infinity Laplacian.} Comm. Partial Differ. Equ. 33 (2008), no. 10-12, 1788-1817.

\bibitem{maripessoa} L. Mari and L.F. Pessoa, \emph{Duality between Ahlfors-Liouville and Khas'minskii properties for non-linear equations.} Comm. Anal. Geom. 28 (2020), no. 2, 395-497.

\bibitem{maripessoa_2} L. Mari and L.F. Pessoa, \emph{Maximum principles at infinity and the Ahlfors-Khas'minskii duality: an overview.} Contemporary research in elliptic PDEs and related topics, 419-455, Springer INdAM Ser., 33, Springer, Cham, 2019;

%
%
%
%
%
\bibitem{maririgolisetti_mono} L. Mari, M. Rigoli and A.G. Setti, \emph{On the $1/H$-flow by $p$-Laplace approximation: new estimates via fake distances under Ricci lower bounds}. Available at arXiv:1905.00216.
%
\bibitem{marivaltorta} L. Mari and D. Valtorta, \emph{On the equivalence of stochastic completeness, Liouville and Khas'minskii condition in linear and nonlinear setting.} Trans. Amer. Math. Soc. 365 (2013), no. 9, 4699-4727.
%
%
\bibitem{mebrate_mohammed} B. Mebrate and A. Mohammed, \emph{Infinity-Laplacian type equations and their associated Dirichlet problems.} Complex Variables and Elliptic Equations (2018), 31p.

\bibitem{mebrate_mohammed_2} B. Mebrate and A. Mohammed, \emph{Comparison principles for infinity-Laplace equations in Finsler metrics}. Nonlinear Anal. 190 (2020), 111605.


\bibitem{mennucci} A.G.C. Mennucci, \emph{Regularity and Variationality of Solutions to Hamilton-Jacobi Equations. Part II: Variationality, Existence, Uniqueness.} Appl. Math. Opt. 63 (2011), 191-216.

\bibitem{mennucci_2} A.G.C. Mennucci, \emph{On asymmetric distances.} Anal. Geom. Metr. Spaces 1 (2013), 200-231. 

%
%
%
%
%
\bibitem{omori} H. Omori, \emph{Isometric immersions of Riemannian manifolds.} J. Math. Soc. Japan 19 (1967), 205-214.
%
%
%
%
%

\bibitem{peres_schramm_sheffield_wilson} Y. Peres, O. Schramm, S. Sheffield and D. Wilson, \emph{Tug-of-war and the infinity Laplacian.} J. Amer. Math. Soc. 22 (2009), 167-210.
%

%
%
\bibitem{prs_proceeding} S. Pigola, M. Rigoli and A.G. Setti, \emph{A remark on the maximum principle and stochastic completeness.} Proc. Amer. Math. Soc. 131 (2003), no. 4, 1283-1288.
%
%
\bibitem{prsmemoirs} S. Pigola, M. Rigoli and A.G. Setti, \emph{Maximum principles on Riemannian manifolds and
applications.} Mem. Amer. Math. Soc. 174 (2005), no. 822.
%
%
\bibitem{prs_overview} S. Pigola, M. Rigoli and A.G. Setti, \emph{Maximum principles at infinity on Riemannian manifolds: an overview.} Workshop on Differential Geometry (Portuguese), Mat. Contemp. 31 (2006), 81-128.
%
%
%
\bibitem{pigolasetti_ensaio} S. Pigola and A.G. Setti, \emph{Global divergence theorems in nonlinear PDEs and geometry.} Ensaios Matem\'aticos [Mathematical Surveys], 26. Sociedade Brasileira de Matem\'atica, Rio de Janeiro, 2014. ii+77 pp.
%
%
%
%
%
%
%
%
%
%
%
%
%
%
%
%
\bibitem{rossi_survey} J.D. Rossi, \emph{The Infinity Laplacian and Tug-of-War Games (Games that PDE People like toplay).} www.capde.cl/infty-Lapla-Pisa.pdf

%
%
%


\bibitem{shen} Z. Shen, \emph{Lectures on Finsler Geometry.} World Scientific Publishing, Singapore 2001. p. 307.

%
%
\bibitem{sullivan} F. Sullivan, \emph{A characterization of complete metric spaces.} Proc. Amer. Math. Soc. 83 (1981), no. 2, 345-346.
%
%
%
%
%
%
%
%
\bibitem{weston} J.D. Weston, \emph{A characterization of metric completeness.} Proc. Amer. Math. Soc. 64 (1977), no. 1, 186-188.
%

\bibitem{wu} Y. Wu, \emph{Absolute minimizers in Finsler metrics.} Ph.D. dissertation, UC Berkeley, 1995.


\bibitem{wu_xin} B.Y. Wu and Y.L. Xin, \emph{Comparison theorems in Finsler Geometry and their applications.} Math. Ann. 337 (2007), 177--196.

\bibitem{yau} S.T. Yau, \emph{Harmonic functions on complete Riemannian manifolds.} Comm. Pure Appl. Math. 28 (1975), 201-228.
%
%
%
%
%

\end{thebibliography}

\end{document}